\documentclass[oneside]{amsart}
\usepackage[utf8]{inputenc}
\usepackage{amsmath,amstext,amsopn,amssymb, amsfonts, amsthm, mathtools}
\usepackage[dvipsnames]{xcolor}
\usepackage[margin=1in]{geometry}
\usepackage{float}
\usepackage{bbm}
\usepackage{mathscinet}
\usepackage{wrapfig}
\usepackage{pdfsync}
\usepackage{tikz}
\usetikzlibrary{calc}
\usetikzlibrary {arrows.meta}
\usepackage{enumitem}
\usepackage{caption}

\usepackage{subcaption}
\captionsetup[subfigure]{labelfont=rm}
\usepackage{xfrac}

\usepackage{graphicx}
\usepackage{mathrsfs}
\usepackage[ruled,vlined,algosection]{algorithm2e}
\DeclareMathAlphabet{\mathpzc}{OT1}{pzc}{m}{it}



\usepackage[pdftex,bookmarks,colorlinks,breaklinks]{hyperref}  
\definecolor{dullmagenta}{rgb}{0.4,0,0.4}   
\definecolor{darkblue}{rgb}{0,0,0.4}
\definecolor{darkgreen}{rgb}{0,0.4,0}
\definecolor{Azure}{rgb}{0.0, 0.5, 1.0}
\hypersetup{linkcolor=darkblue,citecolor=blue,filecolor=dullmagenta,urlcolor=darkblue} 

\definecolor{green0}{rgb}{0.7,1,0.7}

\definecolor{green1}{rgb}{0,0.8,0}
\definecolor{red1}{rgb}{0.8,0,0}

\definecolor{green2}{rgb}{0,0.5,0}
\definecolor{red2}{rgb}{0.5,0,0}


\newtheorem{TheoremLetter}{Theorem}
{}


\numberwithin{equation}{section}

\newtheorem{theorem}{Theorem}
\newtheorem*{theorem*}{Theorem}
\numberwithin{theorem}{section}

\newtheorem{definition}{Definition}
\newtheorem*{definition*}{Definition}
\numberwithin{definition}{section}

\newtheorem*{conjecture*}{Conjecture}
\numberwithin{conjecture}{section}

\newtheorem*{question*}{Question}
\numberwithin{question}{section}

\newtheorem*{example*}{Example}
\numberwithin{example}{section}

\newtheorem{lemma}[theorem]{Lemma}
\newtheorem*{lemma*}{Lemma}

\newtheorem{proposition}[theorem]{Proposition}
\newtheorem*{proposition*}{Proposition}

\newtheorem{corollary}[theorem]{Corollary}
\newtheorem*{corollary*}{Corollary}

\newtheorem*{remark*}{Remark}
\numberwithin{remark}{section}


\makeatletter
\newcommand{\customlabel}[2]{%
   \protected@write \@auxout {}{\string \newlabel {#1}{{#2}{\thepage}{#2}{#1}{}} }%
   \hypertarget{#1}{#2}
}
\makeatother

\def\XXint#1#2#3{{\setbox0=\hbox{$#1{#2#3}{\int}$}
     \vcenter{\hbox{$#2#3$}}\kern-.5\wd0}}




\newcommand{\car}{\operatorname{Car}}

\newcommand{\ind}{\mathbbm{1}}

\newcommand{\candidateB}{\mathbf{B}}
\newcommand{\candidatef}{\mathbf{f}}
\newcommand{\candidateg}{\mathbf{g}}


\newcommand{\mbr}{\mathbb{R}}

\newcommand{\mcb}{\mathcal{B}}

\newcommand{\bff}{\mathbf{f}}
\newcommand{\bfg}{\mathbf{g}}
\newcommand{\bfb}{\mathbf{B}}

\newcommand{\iLint}{\mathtt{L}}

\definecolor{darkmagenta}{rgb}{0.55, 0.0, 0.55}
\definecolor{darkspringgreen}{rgb}{0.09, 0.45, 0.27}
\definecolor{indiagreen}{rgb}{0.07, 0.53, 0.03}
\definecolor{indigo}{rgb}{0.29, 0.0, 0.51}
\definecolor{orange-red}{rgb}{1.0, 0.27, 0.0}

\definecolor{irinared}{rgb}{0.671,0.247,0.153}
\definecolor{irinared2}{rgb}{0.89,0.239,0.239}
\definecolor{irinablue}{rgb}{0.153,0.392,0.671}

\newcommand{\polygon}{\scalebox{1.5}{$\diamond$}}

\makeindex
\begin{document}

\author{Irina Holmes Fay}
\address[Irina Holmes Fay]{Texas A\&M University}
\email{irinaholmes@tamu.edu}
\thanks{I.H.F. is supported by NSF grant NSF-DMS-2246985. This grant also supported the second author's visit to College Station in November 2023, 
during which part of this work was completed.}

\author{Guillermo Rey}
\address[Guillermo Rey]{Universidad Aut\'onoma de Madrid}
\email{guillermo.rey@uam.es}
\thanks{G.R. is supported by grant PID2022-139521NA-I00, funded by MICIU/AEI/10.13039/501100011033}

\author{Kristina Ana Škreb}
\address[Kristina Ana Škreb]{University of Zagreb}
\email{kristina.ana.skreb@grad.unizg.hr}
\thanks{K.A.\v S  was supported in part by the Croatian Science Foundation under the project number HRZZ-IP-2022-10-5116 (FANAP)}

\title{Sharp restricted weak-type estimates for sparse operators}
\begin{abstract}
  We find the exact Bellman function associated to the level-sets of sparse operators acting on characteristic functions.
\end{abstract}
\maketitle

\section{Introduction}
\label{S:1}

In this article we study the restricted weak-type bound for positive sparse operators, i.e.: operators of the form
\begin{align*}
  \mathcal{A}_{\mathcal{S}} f = \sum_{J \in \mathcal{S}} \langle |f| \rangle_J \ind_J,
\end{align*}
where $\langle f \rangle_J$ is the average of $f$ over the interval $J$: $\langle f \rangle_J = \frac{1}{|J|}\int_J f$,
and $\mathcal{S}$ is a \emph{sparse} collection of dyadic intervals. We use $|\cdot|$ to denote the Lebesgue measure on $\mathbb{R}$.

The notion of \emph{sparse} collection is a generalization of the concept of a pairwise disjoint collection.
A sparse collection is in a certain sense ``almost disjoint''.
In particular: we say that a collection $\mathcal{S}$ of measurable sets is \emph{sparse}
if for every $J \in \mathcal{S}$ there exists a subset $E(J) \subseteq J$ such that the collection
$\{E(J)\}_{J \in \mathcal{S}}$ is pairwise-disjoint,
and such that $|E(J)| \geq \frac{1}{2}|J|$ for all $J \in \mathcal{S}$.

Sparse operators such as $\mathcal{A}_{\mathcal{S}}$ model the more delicate Calderón-Zygmund operators,
and in fact one can dominate a Calderón-Zygmund operator by an appropriately chosen positive sparse operator.
We refer the reader to \cite{LNBook} and \cite{MR4311199} for more details on this topic.

In the present article we will be concerned with \emph{local} level-set estimates, the most well-known of which is
the weak-type $(1,1)$ inequality
\begin{align} \label{intro:weak-type}
  |\{x \in [0,1):\, \mathcal{A}_{\mathcal{S}}f(x) \geq \lambda\}| \lesssim \frac{1}{\lambda} \int_0^1 f,
\end{align}
whenever the sparse collection $\mathcal{S}$ consists entirely of dyadic subintervals of $[0,1)$ and $f \geq 0$.

One can prove this inequality using a Calderón-Zygmund decomposition in the usual way,
see also \cite{MR3161110} for another approach closer in spirit to this article.

We will study the particular case of \eqref{intro:weak-type} where the function $f$ is assumed to be the characteristic function of a subset of $[0,1)$,
that is: the \emph{restricted} weak-type setting. Under this stronger hypothesis one should expect additional decay in $\lambda$.

A particularly instructive example appears when $E = [0,1)$.
In this case the function $x \mapsto \mathcal{A}_{\mathcal{S}}\ind_E(x)$ counts the number of intervals in $\mathcal{S}$ that contain the point $x$,
and we have
\begin{align*}
  |\{x \in [0,1):\, \mathcal{A}_{\mathcal{S}}\ind_{[0,1)}(x) \geq \lambda\}| \leq 2^{2-\lambda}.
\end{align*}
One can show exponential decay estimates like this by duality --for example, using sharp bounds for the operator norm of the dyadic maximal function on $L^p$ for small $p$--
but obtaining the best constants requires a different approach.

Instead of trying to indirectly obtain sharp upper bounds, we take the Bellman function approach (see for example \cite{MR0744226}, \cite{MR1685781}, \cite{MR2354322}, where this technique is used).
This strategy shifts the focus to the seemingly harder question of computing the function given by
\begin{align} \label{intro:first_sup}
  \mathbb{B}(x, \lambda) = \sup |\{t \in [0,1):\, \mathcal{A}_{\mathcal{S}}\ind_E(t) \geq \lambda\}|,
\end{align}
where the supremum is taken over all dyadic sparse collections $\mathcal{S}$, and all subsets $E \subseteq [0,1)$ with measure $x$.

In this article we find an explicit expression for $\mathbb{B}$.
Our analysis allows us to show that the supremum is attained, and to describe the pairs $(E, \mathcal{S})$ at which it is attained.

The explicit expression of $\mathbb{B}$ is slightly involved, but one can easily derive the following estimate from Theorem \ref{MainTheorem} below.
\begin{corollary*}
  If $\mathcal{S}$ is sparse, $E \subseteq [0,1)$, $|E| = 2^{-n}$, and $\lambda \in \mathbb{N} - 2^{-n}$ is at least $2$, then
  \begin{align*}
    |\{t \in [0,1):\, \mathcal{A}_{\mathcal{S}}\ind_E(t) \geq \lambda\}| \leq |E| 2^{3-|E|-\lambda},
  \end{align*}
  and equality is attained for certain pairs $(E, \mathcal{S})$.
\end{corollary*}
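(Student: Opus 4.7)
By the definition of $\mathbb{B}$ in \eqref{intro:first_sup},
\begin{equation*}
  |\{t \in [0,1):\, \mathcal{A}_{\mathcal{S}}\ind_E(t) \geq \lambda\}| \leq \mathbb{B}(|E|, \lambda)
\end{equation*}
for every admissible pair $(E, \mathcal{S})$. Writing $\lambda = N - 2^{-n}$ with $N \in \mathbb{N}$ (so $N \geq 3$ under the hypothesis $\lambda \geq 2$), the bound in the corollary is equivalent to the identity $\mathbb{B}(2^{-n}, N - 2^{-n}) = 2^{3-N-n}$, which equals $|E| \cdot 2^{3-|E|-\lambda}$. The plan is to (i) derive the upper bound $\mathbb{B}(2^{-n}, N - 2^{-n}) \leq 2^{3-N-n}$ by evaluating the explicit expression from Theorem \ref{MainTheorem} at this grid point, and (ii) to exhibit an extremizer that matches this value.

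For (i), the parameter choices $|E| = 2^{-n}$ and $\lambda + 2^{-n} \in \mathbb{N}$ place the point $(|E|, \lambda)$ on the natural dyadic lattice of corners of the piecewise definition of $\mathbb{B}$; we expect the ``slightly involved'' expression to collapse to the clean value $2^{3-N-n}$ there, once the relevant branch is identified. This case analysis is the main technical step; the role of the hypothesis $\lambda \geq 2$ is exactly to ensure that $(|E|, \lambda)$ sits in the regime where the formula admits this simple closed form.

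For (ii), set $K = N + n - 3$, which is nonnegative and at least $n$ under our hypotheses. Take $E = [0, 2^{-n})$ and
\begin{equation*}
  \mathcal{S} = \{[0, 2^{-k}) : 0 \leq k \leq K\} \cup \bigl\{[0, 2^{-K-1}),\; [2^{-K-1}, 2^{-K})\bigr\}.
\end{equation*}
Sparsity of $\mathcal{S}$ is elementary to verify using the natural disjoint choices of $E(J)$-sets, each of measure exactly $|J|/2$ (take $E([0, 2^{-k})) = [2^{-k-1}, 2^{-k})$ for $0 \leq k < K$, let the two bottom halves keep their left sub-halves, and fill the remaining half of $[0, 2^{-K})$ with $E([0, 2^{-K}))$). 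Since $K \geq n$, every interval of $\mathcal{S}$ contained in $[0, 2^{-n})$ contributes average $1$, while $\langle \ind_E \rangle_{[0, 2^{-k})} = 2^{k-n}$ for $0 \leq k \leq n$. A direct computation then gives, for every $x \in [0, 2^{-K})$,
\begin{equation*}
  \mathcal{A}_{\mathcal{S}}\ind_E(x) = \sum_{k=0}^{n} 2^{k-n} + \sum_{k=n+1}^{K} 1 + 1 = (2 - 2^{-n}) + (K - n) + 1 = N - 2^{-n} = \lambda,
\end{equation*}
while for $x \notin [0, 2^{-K})$ the point belongs to no half and to strictly fewer chain intervals, so the sum loses at least $2$ full units and is strictly less than $\lambda$. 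The level set is therefore exactly $[0, 2^{-K})$, of measure $2^{-K} = 2^{3-N-n} = |E| \cdot 2^{3-|E|-\lambda}$, as claimed.
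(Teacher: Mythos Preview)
Your extremizer construction in part (ii) is correct and complete: the collection $\mathcal{S}$ is sparse with the $E(J)$-sets you indicate, the computation of $\mathcal{A}_{\mathcal{S}}\ind_E$ on $[0,2^{-K})$ is right, and for $x\in[2^{-j},2^{-j+1})$ with $1\le j\le K$ the sum drops by at least $2$ (you lose the $k=K$ term, worth $1$ since $K\ge n$, and the bottom term, also worth $1$), so the level set is exactly $[0,2^{-K})$.

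Part (i), however, is not a proof but a promissory note: you write that you ``expect'' the formula to collapse and call the case analysis ``the main technical step'' without carrying it out. The paper offers no proof of this corollary either (it just says the estimate is easily derived from Theorem~\ref{MainTheorem}), so there is no approach to compare with; but what is missing from your argument is short and you should supply it. Two observations finish the job. First, the two-variable function $\mathbb{B}(x,\lambda)$ of \eqref{intro:first_sup} equals $\sup_{A\in[0,2]}\candidateB(x,A,\lambda)$, and for $\lambda>1$ the map $A\mapsto\candidateB(x,A,\lambda)=\tfrac{A}{2}\widetilde{\candidatef}(2x/A,\lambda)$ is nondecreasing (this follows from concavity of $\widetilde{\candidatef}(\cdot,\lambda)$ together with $\widetilde{\candidatef}(0,\lambda)=0$, which makes $u\mapsto\widetilde{\candidatef}(u,\lambda)/u$ nonincreasing); hence $\mathbb{B}(x,\lambda)=\candidatef(x,\lambda)$. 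Second, with $K=N+n-3$ the hypothesis $N\ge 3$ gives $0\le n\le K$, and the point $(2^{-n},\,N-2^{-n})$ is \emph{exactly} the vertex
\[
F_n^{K}=\Big(\tfrac{1}{2^n},\;K-n+3-\tfrac{1}{2^n}\Big)
\]
on the curve $\Gamma_{K}$, where by definition $\candidatef\equiv 2^{-K}=2^{3-N-n}$. No branch-by-branch case analysis is needed; the hypothesis $\lambda\ge 2$ is what guarantees $n\le K$ so that $F_n^K$ is indeed a vertex of $\Gamma_K$.
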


In order to find the supremum in \eqref{intro:first_sup} -- as is common when employing the Bellman function technique -- we need an extra variable
measuring the degree of ``sparseness'' of $\mathcal{S}$.
To this end, we can introduce the \emph{Carleson height} and \emph{Carleson constant} of a sequence $\{\alpha_{I}\}$ defined on the dyadic subintervals $I$ of $[0,1)$:
\begin{align*}
  A(\alpha, I) := \frac{1}{|I|}\sum_{J \in \mathcal{D}(I)} \alpha_J |J| \qquad \text{and} \qquad \|\alpha\|_{\car} = \sup_{I \in \mathcal{D}([0,1))} A(\alpha, I),
\end{align*}
respectively.
Here we are denoting the collection of all dyadic intervals contained in $I$ by $\mathcal{D}(I)$.
We will also abbreviate $A(\alpha, [0,1))$ to just $A(\alpha)$.

One can identify collections $\mathcal{S} \subseteq \mathcal{D}([0,1))$ as a sequences indexed by $\mathcal{D}([0,1))$ and taking values in $\{0,1\}$. Indeed, setting
\begin{align*}
  \alpha_I = \begin{cases}
    1 &\text{if } I \in \mathcal{S} \\
    0 &\text{otherwise}
  \end{cases}
\end{align*}
it is easy to see that, if $\alpha$ is the sequence associated with a sparse collection, then the Carleson constant of $\alpha$ is bounded by $2$.
Using this identification between binary sequences over $\mathcal{D}([0,1))$ and subcollections of $\mathcal{D}([0,1))$, we will say that a collection $\mathcal{S}$
is a Carleson collection if the associated binary sequence is a Carleson sequence with constant at most $2$.

As we have just mentioned, every sparse collection is easily seen to be Carleson.
The converse is also true, though the proof is not as easy, the reader may find the equivalence in Lemma 6.3 of \cite{LNBook} (one may find alternative proofs in \cite{Verbitsky}, \cite{BarronThesis}, \cite{Hanninen}, or \cite{MR4682731}).
This equivalence will be central to our approach, as the \emph{Carleson height} lends itself more easily to dyadic decompositions.

With these definitions we can introduce: for $(x, A, \lambda) \in [0,1] \times [0,2] \times \mathbb{R}$
\begin{align} \label{intro:full_B}
  \mathbb{B}(x,A,\lambda) = \sup |\{t \in [0,1):\, \mathcal{A}_{\mathcal{S}}\ind_E(t) \geq \lambda\}|,
\end{align}
where the supremum is taken over all measurable subsets $E \subseteq [0,1)$ with measure $x$, and all Carleson collections $\mathcal{S}$ with \emph{Carleson height} $A$.

We should note that it is not immediately obvious that there exist Carleson collections $\alpha$ with $A(\alpha) = A$ for every $A \in [0,2]$.
However, it is not difficult to construct a Carleson collection with height $2$, and one can then combine this with the empty sequence to obtain all the heights in between $0$ and $2$.

In order to compute $\mathbb{B}$ we explicitly find a lower bound $\candidateB$ using the defining dynamics of the problem.
This lower bound is then shown to be optimal by proving that $\mathbf{B}$ satsifies a certain \emph{strengthened concavity} property.

For $\lambda \leq 0$ the function $\candidateB$ is identically $1$, this is just the observation that $\mathcal{A}_{\mathcal{S}}\ind_E$ is never negative.
When $\lambda > 0$, the behavior of the function $\candidateB$ varies slightly depending on whether $\lambda \leq 1$ or $\lambda > 1$.

When $0 < \lambda \leq 1$ we can split the $(x,A)$-domain $R = [0,1] \times [0,2]$ into four regions (see Figure \ref{fig:xaplane}).
These regions are defined in terms of $\lambda$:
\begin{align*}
  \Sigma_0'(\lambda) &= \polygon\big((\tfrac{\lambda}2, 2), (\lambda, 1), (1,1), (1,2)\big) \\
  \Sigma_0(\lambda) &= \polygon\big((0,0), (\lambda, 1), (\tfrac{\lambda}2, 2)\big) \\
   \Sigma_1(\lambda) &= \polygon\big((0,0), (1,0), (1,1), (\lambda, 1)\big) \\
  \Sigma_2(\lambda) &= \polygon\big((0,0), (\tfrac{\lambda}2, 2), (0,2)\big),
\end{align*}
where we have used the notation $\polygon(A,B,C, \dots)$ to denote the polygon given by the points $A, B, C, \dots$ in counterclockwise order.

\begin{figure}[H]
  \begin{tikzpicture}[scale=0.7]
    \draw[thick, -] (0,0) -- (16,0);
    \draw[thick, -] (16,0) -- (16,8);
    \draw[thick, -] (16,8) -- (0,8);
    \draw[thick, -] (0,8) -- (0,0);

    \draw[thick, -] (0,0) -- (10,4);
    \draw[thick, -] (0,0) -- (5,8);
    \draw[thick, -] (10,4) -- (5,8);
    \draw[thick, -] (10,4) -- (16,4);

    \node at  (-0.75, 0) {$(0,0)$};
    \node at  (16+0.75, 0) {$(1,0)$};
    \node at  (16+0.75, 8) {$(1,2)$};
    \node at  (-0.75, 8) {$(0,2)$};

    \fill (10,4) circle (2pt); 
    \fill (5,8) circle (2pt);  

    \node at (10+0.5, 4+0.5) {$(\lambda,1)$};
    \node at (5, 8+0.5) {$(\tfrac{\lambda}2, 2)$};

    \node at (13, 6) {\huge $\Sigma_0'(\lambda)$};
    \node at (13, 2) {\huge $\Sigma_1(\lambda)$};
    \node at (5, 4) {\huge $\Sigma_0(\lambda)$};
    \node at (2, 6) {\huge $\Sigma_2(\lambda)$};
  \end{tikzpicture}
\caption{$(x, A)$-domain regions for $0<\lambda\leq 1$}
\label{fig:xaplane}
\end{figure}
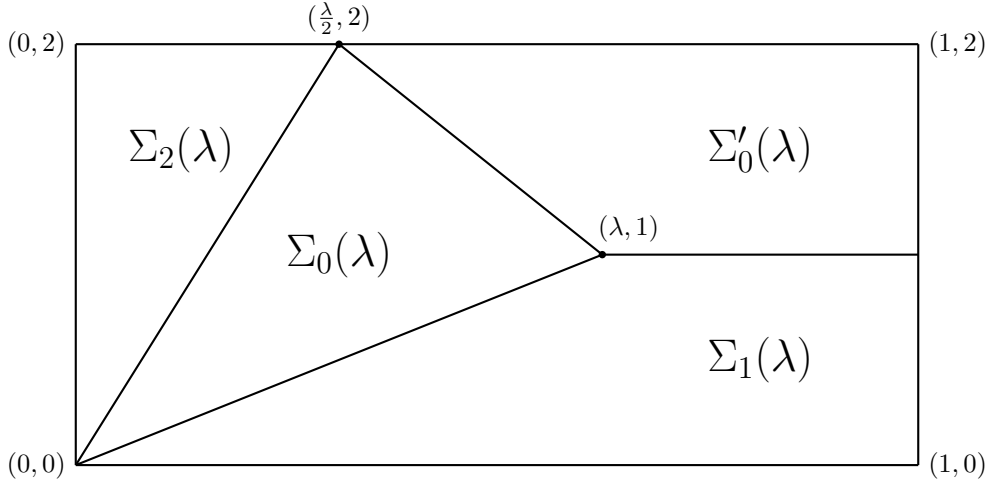

When $0 < \lambda \leq 1$ we define $\candidateB$ as follows:
\begin{align*}
  \candidateB(x,A,\lambda) = \begin{cases}
    1 &\text{in } \Sigma_0' \\
    A &\text{in } \Sigma_1 \\
    \frac{A+\frac{2x}{\lambda}}{3} &\text{in } \Sigma_0 \\
    \frac{A}{2}\candidatef\Big(\frac{2x}{A}, \lambda\Big) &\text{in } \Sigma_2
  \end{cases}
\end{align*}

When $\lambda > 1$ we set
\begin{align*}
  \candidateB(x,A,\lambda) = \begin{cases}
    \frac{A}{2}\candidatef\Big(\frac{2x}{A}, \lambda\Big) &\text{when } 2x \leq A \\
    \frac{A}{2}\candidatef(1, \lambda) &\text{when } 2x > A
  \end{cases}
\end{align*}

Naturally, the function $\candidatef$ is the main character in our article.

We are thus left with specifying $\candidatef$. To this end, for any $0 \leq k \leq m$ define
\begin{align*}
  F_k^m = \bigg(\frac{1}{2^k}, \: m-k+3-\frac{1}{2^{k}}\bigg).
\end{align*}
For every $m \geq 0$ we denote by $\Gamma_m$ the piecewise-linear curve given by the sequence of points
\begin{align*}
  (0,0) \to F_m^m \to F_{m-1}^m \to F_{m-2}^m \to \dots \to F_0^m.
\end{align*}

These curves form the ``scaffolding'' on which $\candidatef$ is defined.
In particular, $\candidatef \equiv 2^{-m}$ on $\Gamma_m$.
Between the curves $\{\Gamma_m\}$ the function $\candidatef$ is defined as the linear interpolation along horizontal lines from these curves.


In particular, through the point $(x_\ast, \lambda_\ast)$ draw a horizontal line $\ell$ and let
$x_0 < x_\ast < x_1$ be the $x$-intercepts of $\ell$ with the two curves $\Gamma_{m_0}$ and $\Gamma_{m_1}$ (respectively) between which $(x_\ast, \lambda_\ast)$ lies.
There will always be an intersecting curve ``above'' $(x_\ast, \lambda_\ast)$, but there may not be one ``below''.
In case there is no curve below, we just set $x_1 = 1$ and $m_1 = m_0$.
Now write $x_\ast$ as a convex combination of $x_0$ and $x_1$: $x_\ast = (1-t)x_0 + t x_1$, i.e.:
\begin{align*}
    t = \frac{x_\ast - x_0}{x_1-x_0}.
\end{align*}
Then
\begin{align*}
    \candidatef(x_\ast, \lambda_\ast) &= (1-t)\candidatef(x_0, \lambda_\ast) + t\candidatef(x_1,\lambda_\ast)\\
    &= (1-t)2^{-m_0} + t2^{-m_1}
\end{align*}

\begin{figure}[H]
  \includegraphics[scale=0.33]{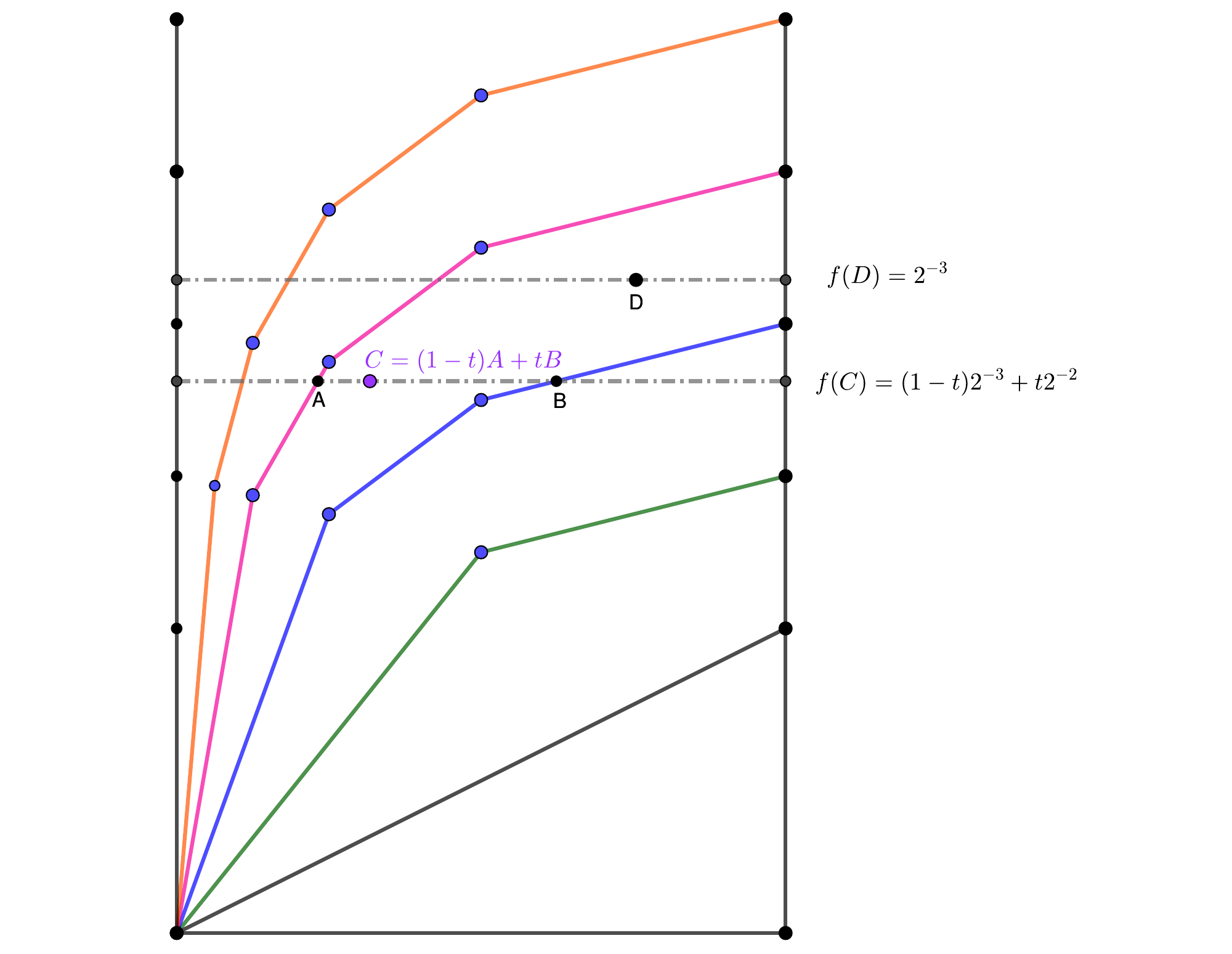}
 \caption{Diagram of $\candidatef$. Here $C$ and $D$ represent the two possible positions of the point $(x_\ast, \lambda_\ast)$.}
\end{figure}

We can now fully state our main theorem:
\begin{TheoremLetter} \label{MainTheorem}
  The function $\candidateB$ constructed above agrees pointwise with $\mathbb{B}$:
  \begin{align*}
    \mathbb{B} = \mathbf{B}
  \end{align*}
  everywhere in the domain $[0,1] \times [0,2] \times \mathbb{R}$.
\end{TheoremLetter}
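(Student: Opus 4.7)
The plan follows the standard two-sided Bellman-function strategy. The lower bound $\mathbb{B}\geq \mathbf{B}$ will be established by exhibiting explicit extremizing pairs $(E,\mathcal{S})$ that realize (or at least approach) the value $\mathbf{B}(x,A,\lambda)$ at every point of the domain, and the upper bound $\mathbb{B}\leq \mathbf{B}$ by a dyadic induction driven by a concavity-type inequality satisfied by the candidate function $\mathbf{B}$.

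For the lower bound, the scaffolding points $F_k^m$ suggest a natural family of extremizers built from binary-tree configurations of dyadic intervals: for each $0\leq k\leq m$ one aims to construct a Carleson collection $\mathcal{S}$ of height $2$ together with a set $E$ of measure $2^{-k}$ for which the superlevel set $\{t:\mathcal{A}_{\mathcal{S}}\ind_E(t)\geq m-k+3-2^{-k}\}$ has measure exactly $2^{-m}$, realizing $\mathbf{B}$ at $F_k^m$ with $A=2$. Arbitrary values of $\mathbf{B}$ are then reached by disjoint juxtaposition of rescaled copies of these optimal configurations, which realizes the linear interpolation used to define $\mathbf{f}$; the passage from $A=2$ to general $A\in[0,2]$, and the polygon descriptions inside $\Sigma_0,\Sigma_0',\Sigma_1$ when $0<\lambda\leq 1$, are handled by pasting optimal pieces together with trivial blocks on which $\mathcal{S}=\emptyset$.

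For the upper bound, split $[0,1)$ into its two dyadic children $I_\pm$ and set $\delta=\alpha_{[0,1)}\in\{0,1\}$, $x_\pm=2|E\cap I_\pm|\in[0,1]$, $A_\pm=A(\alpha,I_\pm)\in[0,2]$. These quantities satisfy
\begin{align*}
  x=\tfrac{x_-+x_+}{2},\qquad A=\delta+\tfrac{A_-+A_+}{2},
\end{align*}
and a point $t\in I_\pm$ reaches the level $\lambda$ for $\mathcal{A}_{\mathcal{S}}\ind_E$ if and only if it reaches the level $\lambda-\delta x$ for the localized problem on $I_\pm$. It will therefore suffice to prove the two inequalities
\begin{align*}
  2\,\mathbf{B}(x,A,\lambda) \geq \mathbf{B}(x_-,A_-,\lambda-\delta x) + \mathbf{B}(x_+,A_+,\lambda-\delta x),\qquad \delta\in\{0,1\},
\end{align*}
subject to the above compatibility constraints. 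Iterating across dyadic scales then yields the upper bound when $\mathcal{S}$ is finite, and the general case follows by truncation and monotone approximation (the Carleson condition forces $\sum_{J\in\mathcal{S}}|J|\leq 2$, so all quantities are integrable).

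The main obstacle is the region-by-region verification of these inequalities for the piecewise-defined $\mathbf{B}$. The $\delta=0$ case should be nearly automatic, since $\mathbf{f}$ is defined by linear interpolation along horizontal lines at fixed $\lambda$, so concavity in $(x,A)$ at fixed $\lambda$ is essentially a feature of the construction. The $\delta=1$ case is the substantive one: it couples a downward shift in $A$ with a horizontal translation $\lambda\mapsto\lambda-x$, and must be checked across all regions of the piecewise description of $\mathbf{B}$ and across the strips bounded by consecutive curves $\Gamma_m$. A direct check at the vertices reveals the combinatorial alignment $F_k^m\mapsto F_{k-1}^{m-1}$ under this shift (taking $x_-=2x$, $x_+=0$, $A_-=2$, $A_+=0$), which makes the Bellman inequality saturate along the extremal branches identified in the lower bound; this is precisely what calibrates the placement of the vertices. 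Confirming that the piecewise-linear interpolation between vertices preserves this saturation, and handling the boundary transitions between regions, is where the bulk of the technical work will lie.
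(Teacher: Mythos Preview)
Your upper-bound strategy matches the paper's: verify that $\mathbf{B}$ satisfies the main inequality (equivalently, the two conditions you write for $\delta\in\{0,1\}$) and then run the dyadic induction, which is exactly the content of the paper's Theorem~\ref{bellman:FTB}. However, you significantly underestimate the $\delta=0$ case. The concavity of $\mathbf{f}(\cdot,\lambda)$ is \emph{not} automatic from the horizontal linear interpolation: that interpolation guarantees linearity between consecutive vertices, but one must still check that the slopes of the successive linear pieces are non-increasing. The paper devotes an entire theorem (Theorem~\ref{T:bff-concave}) to this, splitting into the ranges $\lambda\in(0,2]$, $\lambda\in(2,3]$, and $\lambda\in(k,k+1]$ for $k\geq3$, and in each range comparing explicit slope expressions $a_m(\lambda)$, $b_m(\lambda)$, $c_m(\lambda)$ coming from the different segment types of the curves $\Gamma_m$. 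The reduction from concavity of $\mathbf{B}(\cdot,\cdot,\lambda)$ to concavity of $\mathbf{f}(\cdot,\lambda)$ via the homogeneity $\mathbf{B}_\lambda(tx,tA)=t\,\mathbf{B}_\lambda(x,A)$ is also a step you have not mentioned.

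For the lower bound your route differs from the paper's. You propose to build explicit extremizing pairs $(E,\mathcal{S})$ realizing $\mathbf{B}$ at the scaffolding points $F_k^m$ and then juxtapose them to hit arbitrary points. The paper instead never constructs a single example: it derives $\mathbb{B}\geq\mathbf{B}$ purely from the functional inequalities that $\mathbb{B}$ itself satisfies (the obstacle condition, the jump inequality~\eqref{bellman:jump}, and concavity), iterating the map $T(x,\lambda)=(x/2,\lambda+x/2)$ to propagate the bound $\mathbb{F}=1$ on $\Gamma_0$ up to $\mathbb{F}\geq 2^{-m}$ on $\Gamma_m$, and then filling in by concavity. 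Your approach can certainly be made to work and has the advantage of exhibiting the extremizers concretely, but it requires care: the juxtaposition must preserve the Carleson constraint $\|\alpha\|_{\car}\leq 2$ while hitting the prescribed height $A$, and realizing the linear interpolation in $\mathbf{f}$ this way means combining blocks whose Carleson heights and set measures match correctly. The paper's functional-inequality route sidesteps all of this bookkeeping.
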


As we mentioned above, the proof of Theorem \ref{MainTheorem} follows the Bellman function technique.
One of the main tools that this technique provides is a characterization of $\mathbb{B}$ as the minimizer of a certain class of functions.
Thus, to find $\mathbb{B}$ it suffices to find a candidate $\candidateB$ in this class of functions (which already implies that $\mathbb{B} \leq \candidateB$), and then showing that this candidate is best possible, i.e.: $\mathbb{B} \geq \candidateB$.

The minimization problem which defines $\mathbb{B}$ provides a certain inequality
that all majorizing functions ($\mathbb{B}$ included) must satisfy.
We use this inequality (exclusively) to construct $\candidateB$, hence the inequality
$\mathbb{B} \geq \candidateB$ is guaranteed by construction. This is done in section 3.

Then, what remains is to show that the candidate just constructed belongs to the class of functions of which $\mathbb{B}$ is the minimizer. This, which consists of proving the aforementioned strengthened concavity property, is done in section 4.

The problem we solve in this paper is similar to the one addressed in \cite{MR3161110}, where the unrestricted version was solved.
The reader may observe that inequality \eqref{intro:weak-type} is invariant under the homogeneity $(f,\lambda) \mapsto (tf, t\lambda)$,
and this was used in \cite{MR3161110} to effectively reduce the dimensionality of the problem.
This homogeneity is not present in the \emph{restricted} weak-type problem, so diferent ideas are needed here.

Some authors have explored similar problems.
For example A. Os\polhk{e}kowski studied in \cite{MR4203700} the weak-type $(p,p)$ version of this problem when $p > 1$.
See also \cite{MR3733883} where a similar problem is studied, assuming $f \in L\log L$ instead.

In the weighted setting, one can show
\begin{align} \label{intro:weighted}
  \|Tf\|_{L^{1,\infty}(w)} \lesssim [w]_{A_1}\log(e+[w]_{A_\infty}) \|f\|_{L^1(w)},
\end{align}
when $T$ is a Calder\'on-Zygmund operator, or $T=\mathcal{A}_{\mathcal{S}}$. 
See for example \cite{MR2427454} or \cite{MR3455749}.

It was shown in \cite{MR3812865}, using the Bellman function approach, that some power of the logarithm must be present in \eqref{intro:weighted} when $T$ is a Haar shift operator (which is also used as a model for Calder\'on-Zygmund operators).
Later, it was shown in \cite{MR4150264} (see also \cite{MR4282145}) that the logarithmic term in \eqref{intro:weighted} cannot be improved.
This was proved by constructing explicit examples of weights and functions for a related $L^2$ problem, and then using extrapolation to prove the sharpness of \eqref{intro:weighted}.
The question of whether the logarithmic term in \eqref{intro:weighted} is necessary when $f$ is restricted to characteristic functions, however, open.

\section{The Bellman function setting}
\label{S:2}

In this section we set up the Bellman function framework in which we cast the problem of computing \eqref{intro:full_B}.

First, let us introduce the class of all finite Carleson sequences with \emph{Carleson height} equal to $A$:
\begin{align*}
  \mathcal{C}(A) = \{\alpha:\, \|\alpha\|_{\car} \leq 2,\, A(\alpha, [0,1)) = A, \text{ and } \alpha_J = 0 \text{ for all sufficiently small $J$}\}.
\end{align*}
By a standard limiting argument, we have for all $(x,A,\lambda) \in \Omega_{\mathbb{B}} := [0,1] \times [0,2] \times \mathbb{R}$
\begin{align*}
  \mathbb{B}(x,A,\lambda) = \sup\Big\{ |\{t \in [0,1):\, \mathcal{A}_{\mathcal{S}}\ind_E(t) \geq \lambda\}|:\, \alpha \in \mathcal{C}(A),\, E \subseteq [0,1), \text{ and } |E| = x \Big\}.
\end{align*}
It will be convenient to introduce the $V_\lambda$ operator, which acts on pairs $(E,\alpha)$ where $E \subseteq [0,1)$ and $\alpha$ is a Carleson sequence:
\begin{align*}
  V_\lambda(E,\alpha) = |\{t \in [0,1):\, \mathcal{A}_{\mathcal{S}}\ind_E(t) \geq \lambda\}|.
\end{align*}
Then
\begin{align*}
  \mathbb{B}(x,A,\lambda) = \sup\{V_\lambda(E,\alpha):\, |E| = x \text{ and } \alpha \in \mathcal{C}(A)\}
\end{align*}
($E$ is always a subset of $[0,1)$).

The operator $V_\lambda$ behaves well under \emph{concatenation}, which is the operation of taking two scaled copies of a set (or a Carleson sequence) adapted to $[0,1)$ and placing them (appropriately rescaled)
on the two dyadic children of the interval $[0,1)$. In particular for $E_1, E_2 \subseteq [0,1)$ let
\begin{align*}
  E_1 \oplus E_2 = \bigg(\frac{1}{2}E_1\bigg) \cup  \bigg( \frac{1}{2}E_2 + \frac{1}{2} \bigg).
\end{align*}
Similarly, if $\alpha$ and $\beta$ are Carleson sequences we define, for every $\gamma \in [0,1]$
\begin{align*}
  (\alpha \oplus_\gamma \beta)_J := \begin{cases}
    \gamma &\text{if } J = [0,1)\\
    \alpha_{\widehat{J}} &\text{if } J \subseteq [0,2^{-1}) \\
    \beta_{\widehat{J}} &\text{if } J \subseteq [2^{-1}, 1).
  \end{cases}
\end{align*}
here we are denoting the dyadic parent of $J$ by $\widehat{J}$.

The concatenation of two Carleson sequences is depicted graphically in Figure \ref{fig:concat}.
\begin{figure}[H]
  \begin{tikzpicture}[scale=0.5]
    \draw[thick] (0,0) -- (16,0);
    \draw[thick] (0,8) -- (16,8);
    \draw[thick] (0,0) -- (0, 8);
    \draw[thick] (16,0) -- (16, 8);

    \draw[thick] (0,2) -- (16,2);
    \draw[thick] (0,4) -- (16,4);
    \draw[thick] (0,6) -- (16,6);

    \draw[thick] (2 , 0) -- (2 , 2);
    \draw[thick] (8 , 0) -- (8 , 6);
    \draw[thick] (8 , 0) -- (8 , 6);
    \draw[thick] (4 , 0) -- (4 , 4);
    \draw[thick] (12, 0) -- (12, 4);
    \draw[thick] (6 , 0) -- (6 , 2);
    \draw[thick] (10, 0) -- (10, 2);
    \draw[thick] (14, 0) -- (14, 2);

    \node at  (8, 7) {$\gamma$};

    \node at  (4 , 5) {$\alpha_{{I}}$};
    \node at  (12, 5) {$\beta_{{I}}$};

    \node at  (2 , 3) {$\alpha_{{I}_{-}}$};
    \node at  (6 , 3) {$\alpha_{{I}_{+}}$};

    \node at  (10, 3) {$\beta_{{I}_{-}}$};
    \node at  (14, 3) {$\beta_{{I}_{+}}$};

    \node at  (1 , 1) {$\alpha_{{I}_{--}}$};
    \node at  (3 , 1) {$\alpha_{{I}_{-+}}$};
    \node at  (5 , 1) {$\alpha_{{I}_{+-}}$};
    \node at  (7 , 1) {$\alpha_{{I}_{++}}$};
    \node at  (9 , 1) {$\beta_{{I}_{--}}$};
    \node at  (11, 1) {$\beta_{{I}_{-+}}$};
    \node at  (13, 1) {$\beta_{{I}_{+-}}$};
    \node at  (15, 1) {$\beta_{{I}_{++}}$};

    \draw[thick] (0,0) -- (0,-2) -- (16,-2) -- (16, 0);
    \node at  (8, -1) {$\dots$};

    \node at  (-2, 3) {$\alpha \oplus_\gamma \beta = $};
  \end{tikzpicture}
\caption{Concatenation of $\alpha$ and $\beta$, assigning $\gamma$ to the top interval. Here $I = [0,1)$ and the suffix $+$ or $-$ denotes the right or left child respectively.}
  \label{fig:concat}
\end{figure}
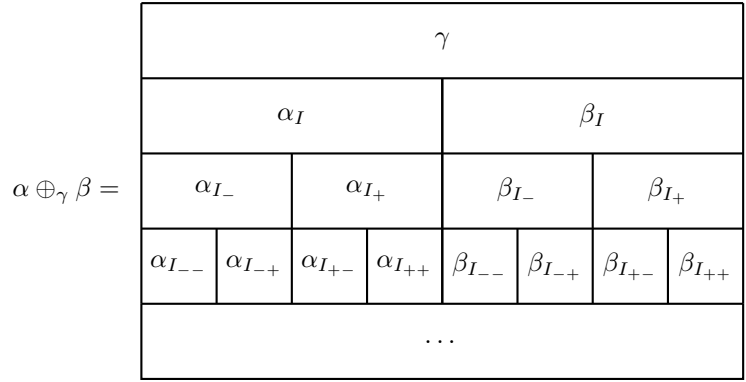

With this notation we can write precisely the dynamics of how the operator $V_\lambda$ behaves under concatenation:
\begin{align} \label{bellman:v_dynamics}
  V_{\lambda + \gamma x}(E_1 \oplus E_2, \alpha_1 \oplus_\gamma \alpha_2) = \frac{1}{2} \big(V_{\lambda}(E_1, \alpha_1) + V_{\lambda}(E_2, \alpha_2)\big),
\end{align}
where $x = |E_1 \oplus E_2|$.

The Carleson height also behaves well under concatenation:
\begin{align*}
  A(\alpha \oplus_\gamma \beta) = \frac{A(\alpha) + A(\beta)}{2} + \gamma.
\end{align*}

We now arrive at the \emph{main inequality} that $\mathbb{B}$ satisfies.
\begin{theorem}
  For any two points $(x_i, A_i) \in [0,1] \times [0,2]$, $i \in \{1,2\}$, denote by $(x, A)$ the mid-point:
  \begin{align*}
    (x,A) = \frac{1}{2}\big((x_1,A_1) + (x_2, A_2)\big).
  \end{align*}
  Then, for all $\lambda \in \mathbb{R}$ and all $\gamma \in \{0,1\}$ such that $A + \gamma \leq 2$ we have
  \begin{align} \label{bellman:mi_gamma}
    \mathbb{B}(x,A+\gamma,\lambda + \gamma x) \geq \frac{1}{2}\sum_{i=1}^2 \mathbb{B}(x_i, A_i, \lambda)
  \end{align}
\end{theorem}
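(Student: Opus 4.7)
The plan is a direct application of the concatenation dynamics. Fix $\epsilon > 0$ and, for each $i \in \{1,2\}$, choose a measurable set $E_i \subseteq [0,1)$ with $|E_i| = x_i$ and a Carleson sequence $\alpha_i \in \mathcal{C}(A_i)$ such that
$$V_\lambda(E_i, \alpha_i) \geq \mathbb{B}(x_i, A_i, \lambda) - \epsilon.$$
Such pairs exist by the definition of $\mathbb{B}$ as a supremum over the admissible class.

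Now set $E := E_1 \oplus E_2$ and $\alpha := \alpha_1 \oplus_\gamma \alpha_2$. From the definition of $\oplus$ we immediately have $|E| = \tfrac{1}{2}(x_1+x_2) = x$, and the concatenation formula for the Carleson height recorded just before the theorem yields
$$A(\alpha) = \frac{A(\alpha_1) + A(\alpha_2)}{2} + \gamma = A + \gamma.$$
To place $\alpha$ inside $\mathcal{C}(A+\gamma)$ we must verify $\|\alpha\|_{\car} \leq 2$. At the root interval $J = [0,1)$ the Carleson average of $\alpha$ equals $A + \gamma$, which is bounded by $2$ precisely by hypothesis. For any proper dyadic subinterval $J \subsetneq [0,1)$, $J$ is contained entirely in either $[0,\tfrac12)$ or $[\tfrac12,1)$, and the Carleson average of $\alpha$ over $J$ coincides with the Carleson average of the corresponding $\alpha_i$ over the dyadic preimage of $J$ under the natural rescaling; this is at most $\|\alpha_i\|_{\car} \leq 2$. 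Thus $\alpha \in \mathcal{C}(A+\gamma)$.

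With admissibility established, $(E, \alpha)$ is a valid test pair for $\mathbb{B}(x, A+\gamma, \lambda + \gamma x)$. Invoking the concatenation identity \eqref{bellman:v_dynamics} with $x = |E|$ and the lower bounds chosen above,
$$\mathbb{B}(x, A+\gamma, \lambda + \gamma x) \;\geq\; V_{\lambda + \gamma x}(E, \alpha) \;=\; \frac{V_\lambda(E_1, \alpha_1) + V_\lambda(E_2, \alpha_2)}{2} \;\geq\; \frac{\mathbb{B}(x_1, A_1, \lambda) + \mathbb{B}(x_2, A_2, \lambda)}{2} - \epsilon.$$
Letting $\epsilon \to 0$ yields \eqref{bellman:mi_gamma}.

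The only step requiring genuine attention is the verification that the root-level Carleson average of $\alpha_1 \oplus_\gamma \alpha_2$ does not exceed $2$, which is exactly the role of the hypothesis $A + \gamma \leq 2$; everything else is routine bookkeeping for the concatenation operations and the definition of $\mathbb{B}$ as a supremum.
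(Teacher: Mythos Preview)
Your proof is correct and follows the same concatenation argument as the paper's, with the welcome addition of an explicit verification that $\|\alpha_1 \oplus_\gamma \alpha_2\|_{\car} \le 2$ (the paper simply asserts $\alpha \in \mathcal{C}(A+\gamma)$ ``by our assumptions''). No changes are needed.
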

\begin{proof}
  Fix $\varepsilon > 0$ and let $(E_i, \alpha_i)$ be almost-extremizers in the definition of $\mathbb{B}(x_i,A_i,\lambda)$, i.e.:
  \begin{align*}
    \mathbb{B}(x_i,A_i,\lambda) \leq V_\lambda(E_i, \alpha_i) + \varepsilon \qquad \text{for i } \in \{1,2\}.
  \end{align*}
  We can form the concatenation of these two examples: $E = E_1 \oplus E_2$ and $\alpha = \alpha_1 \oplus_\gamma \alpha_2$.
  It is easy to see that
  \begin{align*}
    |E| = \frac{x_1+x_2}{2} \quad\text{and}\quad A(\alpha) = \frac{A_1 + A_2}{2} + \gamma.
  \end{align*}

  We now have $(x,A) \in [0,1] \times [0,2]$ and $\alpha \in \mathcal{C}(A)$, by our assumptions, so
  \begin{align*}
    \mathbb{B}(x,A + \gamma,\lambda + \gamma x) &\geq V_{\lambda + \gamma x}(E, \alpha) \\
    &= \frac{1}{2}\sum_{i=1}^2 V_\lambda(E_i, \alpha_i) \tag{by \eqref{bellman:v_dynamics}} \\
    &\geq \frac{1}{2}\sum_{i=1}^2 \mathbb{B}(x_i, A_i, \lambda) - \varepsilon.
  \end{align*}
  The claim follows after taking $\varepsilon \to 0$.
\end{proof}

Inequality \eqref{bellman:mi_gamma} can be seen as a strengthened concavity property, in fact when $\gamma = 0$ it says precisely that $\mathbb{B}(\cdot, \cdot, \lambda)$ is mid-point concave.

The function $\mathbb{B}$ also satisfies the following \emph{obstacle condition}
\begin{align} \label{bellman:obstacle_prev}
  \mathbb{B}(x,A,\lambda) = 1 \qquad \text{whenever }\lambda \leq 0,
\end{align}
which is just the observation that the operator $\mathcal{A}$ is positive.

It turns out that $\mathbb{B}$ is the minimizer among all the functions satisfying \eqref{bellman:mi_gamma} and \eqref{bellman:obstacle_prev}.
Before stating the precise version of this minimization property, let us introduce the following definitions
\begin{definition}
  We will say that a function $B: \Omega_{\mathbb{B}} \to \mathbb{R}_{\geq 0}$ \textbf{satisfies the main inequality} if for both $\gamma \in \{0,1\}$
  \begin{align} \label{bellman:mi}
    B(x,A+\gamma, \lambda + \gamma x) \geq \frac{1}{2}\sum_{i=1}^2 B(x_i,A_i,\lambda),
  \end{align}
  whenever $(x_i,A_i,\lambda) \in \Omega_{\mathbb{B}}$, $(x,A+\gamma,\lambda + \gamma x) \in \Omega_{\mathbb{B}}$, where
  \begin{align*}
    x = \frac{x_1+x_2}{2} \qquad\text{and}\qquad A = \frac{A_1+A_2}{2}.
  \end{align*}
\end{definition}
\begin{definition}
  We will say that a function $B: \Omega_{\mathbb{B}} \to \mathbb{R}_{\geq 0}$ \textbf{satisfies the obstacle condition} if
  \begin{align} \label{bellman:obstacle}
    B(x,A,\lambda) = 1
  \end{align}
  whenever $\lambda \leq 0$.
\end{definition}
Finally, we define the following class of functions:
\begin{align*}
  \mathcal{B} = \{\text{functions }B : \Omega_{\mathbb{B}} \to \mathbb{R}_{\geq 0} \text{ satisfying the main inequality and the obstacle condition}\}.
\end{align*}
So far, we have shown that $\mathbb{B} \in \mathcal{B}$. The next theorem makes the fact that $\mathbb{B}$ is a minimizer of $\mathcal{B}$ precise
\begin{theorem} \label{bellman:FTB}
  For every $B \in \mathcal{B}$ we have $\mathbb{B} \leq B$.
\end{theorem}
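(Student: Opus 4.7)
The plan is to establish the pointwise bound $V_\lambda(E, \alpha) \leq B(x, A, \lambda)$ for every pair $(E, \alpha)$ with $|E|=x$ and $\alpha \in \mathcal{C}(A)$; taking the supremum over such pairs then yields $\mathbb{B}(x,A,\lambda) \leq B(x,A,\lambda)$ on all of $\Omega_{\mathbb{B}}$. Since every $\alpha \in \mathcal{C}(A)$ is required to vanish on all sufficiently small intervals, the proof proceeds by induction on the depth $N(\alpha) := \max\{-\log_2|J| : \alpha_J \neq 0\}$, with the convention $N(0) = -\infty$.

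For the base case $\alpha \equiv 0$, the associated sparse operator vanishes identically and $V_\lambda(E,0) = \ind_{\{\lambda \leq 0\}}$. This is majorized by $B(x,0,\lambda)$: the obstacle condition \eqref{bellman:obstacle} forces $B = 1$ when $\lambda \leq 0$, while $B \geq 0$ handles $\lambda > 0$.

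For the inductive step, I would set $\gamma := \alpha_{[0,1)} \in \{0,1\}$ and decompose $\alpha = \alpha_1 \oplus_\gamma \alpha_2$ and $E = E_1 \oplus E_2$ in the natural way, where $\alpha_i$ is the Carleson sequence obtained by restricting and rescaling $\alpha$ to the $i$-th dyadic subtree, and $A_i := A(\alpha_i)$, $x_i := |E_i|$. Restricting and rescaling only decreases the Carleson constant, so $\|\alpha_i\|_{\car} \leq 2$, giving $\alpha_i \in \mathcal{C}(A_i)$ with $N(\alpha_i) < N(\alpha)$, and the concatenation identities give $x = \tfrac{x_1+x_2}{2}$ and $A = \tfrac{A_1+A_2}{2}+\gamma$. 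The induction hypothesis applied at the level $\lambda' := \lambda - \gamma x$ yields $V_{\lambda'}(E_i, \alpha_i) \leq B(x_i, A_i, \lambda')$, and combining this with the dynamics \eqref{bellman:v_dynamics} and then the main inequality \eqref{bellman:mi} for $B$ (at parameter $\gamma$, whose side condition $\tfrac{A_1+A_2}{2}+\gamma \leq 2$ is automatic since $A \leq 2$) will give
\begin{align*}
  V_\lambda(E,\alpha) = \tfrac{1}{2}\bigl(V_{\lambda'}(E_1,\alpha_1) + V_{\lambda'}(E_2,\alpha_2)\bigr) \leq \tfrac{1}{2}\sum_{i=1}^2 B(x_i,A_i,\lambda') \leq B(x,A,\lambda),
\end{align*}
where the final inequality uses $\lambda' + \gamma x = \lambda$.

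This is the standard ``fundamental theorem of Bellman functions'' induction and there is no deep obstacle. The only delicate points are bookkeeping: verifying that the rescaled subsequences $\alpha_i$ remain in the class $\mathcal{C}$ with the claimed heights, and that the parameter constraints in \eqref{bellman:mi} are compatible with the ambient hypotheses. Both follow immediately from the definitions and from the condition $A \leq 2$.
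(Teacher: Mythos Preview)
Your proof is correct and follows essentially the same approach as the paper's: both arguments repeatedly apply the main inequality \eqref{bellman:mi} to pass from $[0,1)$ down to its dyadic descendants, and then invoke the obstacle condition once the Carleson sequence is exhausted. The paper unrolls this as an explicit $N$-fold iteration terminating in an integral over the level set, while you phrase it as a formal induction on the depth $N(\alpha)$ with $\alpha\equiv 0$ as the base case; these are equivalent presentations of the standard Bellman-function induction.
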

\begin{proof}
  Let $E \subseteq [0,1)$ and $\alpha$ be any finite Carleson sequence. Letting $I = [0,1)$ and $I_{\pm}$ be the left and right dyadic children of $I$:
  \begin{align*}
    I_- = [0, 2^{-1}) \qquad\text{and}\qquad I_+ = [2^{-1}, 1)
  \end{align*}
  we can decompose
  \begin{align*}
    E &= E_{|I_-} \oplus E_{|I_+} \\
    \alpha &= \alpha_{|I_-} \oplus_{\alpha_I} \alpha_{|I_+}.
  \end{align*}
  Here we are denoting by $E_{|J}$ and $\alpha_{|J}$ the restrictions of $E$ and $\alpha$ respectively to an interval $J$.

  Using \eqref{bellman:mi}:
  \begin{align}
    B(|E|, A(\alpha), \lambda) &= B(|E_{|I_-} \oplus E_{|I_+}|, A(\alpha_{|I_-} \oplus_{\alpha_I} \alpha_{|I_+}), \lambda) \notag \\
    &= B\bigg(\frac{|E_{|I_-}| + |E_{|I_+}|}{2}, \frac{A(\alpha_{|I_-}) + A(\alpha_{|I_+})}{2} + \alpha_I, \lambda\bigg) \notag \\
    &\geq \frac{1}{2}\sum_{J \in \mathcal{D}_1(I)} B(|E_{|J}|, A(\alpha_{|J}), \lambda - |E|\alpha_I) \label{bellman:min:ind},
  \end{align}
  where we are denoting by $\mathcal{D}_m$ the $m$-th generation dyadic children of $I$, which we can define inductively by $\mathcal{D}_0(I) = \{I\}$ and
  \begin{align*}
    \mathcal{D}_{m+1}(I) = \{J_{-}:\, J \in \mathcal{D}_m(I)\} \cup \{J_{+}:\, J \in \mathcal{D}_m(I)\}
  \end{align*}
  for $m \geq 1$.

  We can iterate \eqref{bellman:min:ind} to arrive at
  \begin{align*}
    B(|E|, A(\alpha), \lambda) &\geq \frac{1}{2^2}\sum_{J \in \mathcal{D}_2(I)} B(|E_{|J}|, A(\alpha_{|J}), \lambda - |E|\alpha_I - |E_{|J^{(1)}}|\alpha_{J^{(1)}}) \\
    &\qquad \vdots \\
    &\geq \frac{1}{2^N} \sum_{J \in \mathcal{D}_N(I)} B\bigg(|E_{|J}|, A(\alpha_{|J})), \lambda - \sum_{m=1}^{N} |E_{|J^{(m)}}| \alpha_{J^{(m)}} \bigg),
  \end{align*}
  where we are letting $J^{(m)}$ denote the $m$-th dyadic parent of $J$.

  Note that we have reconstructed the operator $\mathcal{A}_{\alpha}(\ind_E)$ in the quantity being subtracted from $\lambda$. (Here we denoted $\mathcal{A}_{\alpha}=\mathcal{A}_{\mathcal{S}}$, where $\mathcal{S}$ is the sparse collection associated with the Carleson sequence $\alpha$.)
  In fact, if $\alpha_J = 0$ for all $J$ of generation $N$ or greater, then the function $\mathcal{A}_{\alpha}(\ind_E)$ is constant on every interval $J \in \mathcal{D}_N(I)$ and we have
  \begin{align*}
    B(|E|, A(\alpha), \lambda) &\geq \int_0^1 B\bigg(|E_{|J}|, A(\alpha_{|J})), \lambda - \mathcal{A}_{\alpha}(\ind_E)(t) \bigg) \, dt.
  \end{align*}

  Consider the level set $L_\lambda = \{t \in [0,1):\, \mathcal{A}_\alpha(\ind_E)(t) \geq \lambda\}$.
  We can use the obstacle condition \eqref{bellman:obstacle} to arrive at
  \begin{align*}
    B(|E|, A(\alpha), \lambda) &\geq \int_{L_\lambda} B\bigg(|E_{|J}|, A(\alpha_{|J})), \lambda - \mathcal{A}_{\alpha}(\ind_E)(t) \bigg) \, dt \\
    &= \int_{L_\lambda} 1 \, dt = |L_\lambda|.
  \end{align*}
  Thus, we have shown that
  \begin{align*}
    |\{t \in [0,1):\, \mathcal{A}_\alpha(\ind_E)(t) \geq \lambda\}| \leq B(|E|, A(\alpha), \lambda),
  \end{align*}
  whenever $\alpha$ is a finite Carleson sequence. A simple limiting argument can be used to remove the finiteness assumption.

  Taking supremums as in the definition of $\mathbb{B}$ yields
  \begin{align*}
    \mathbb{B}(x,A,\lambda) \leq B(x,A,\lambda)
  \end{align*}
  for all $(x, A, \lambda) \in \Omega_{\mathbb{B}}$, as claimed.
\end{proof}

The strategy in the rest of the paper is to use the main inequality and the obstacle condition to obtain
progressively stronger lower bounds for all functions in $\mathcal{B}$.
These lower bounds will then form our candidate $\candidateB$, thus we will have $\mathbb{B} \geq \candidateB$ by construction.
Then, the candidate will be shown to equal $\mathbb{B}$ by proving that it satisfies the obstacle condition and the main inequality, and then applying Theorem \ref{bellman:FTB}.

Before proceeding with the construction of $\candidateB$ let us note two remarks which will both guide the construction, and simplify the verification of the main inequality.

First, the main inequality naturally splits into two properties corresponding to setting $\gamma = 0$ or $\gamma = 1$. Let $B$ be any function in $\mathcal{B}$,
when $\gamma = 0$ the main inequality states that
\begin{align*}
  B\bigg(\frac{x_1+x_2}{2}, \frac{A_1+A_2}{2}, \lambda\bigg) \geq \frac{1}{2}\sum_{i=1}^2 B(x_i,A_i,\lambda)
\end{align*}
whenever $(x_i,A_i,\lambda) \in \Omega_{\mathbb{B}}$ for $i=1,2$.
Note that the domain $\Omega_{\mathbb{B}}$ is convex, so this condition is enough to guarantee that the left-hand side is well-defined.
In other words, when $\gamma = 0$ the main inequality says that $B$  is mid-point concave.

When $\gamma = 1$ we can set $x_1 = x_2 = x$ and $A_1 = A_2 = A$ in \eqref{bellman:mi} to obtain
\begin{align*}
  B(x, A+1, \lambda + x) \geq B(x,A,\lambda)
\end{align*}
whenever $x \in [0,1]$ and $0 \leq A \leq 1$.

These two properties, together with the obstacle condition, are actually sufficient for $B$ to belong to $\mathcal{B}$:
\begin{theorem}
  Suppose $B : \Omega_{\mathbb{B}} \to \mathbb{R}_{\geq 0}$ satisfies the obstacle condition \eqref{bellman:obstacle} and
  \begin{align}
    \tag{Concavity} B(\cdot, \cdot, \lambda) \text{ is mid-point concave} \label{bellman:concavity} \\
    \tag{J} B(x, A+1, \lambda + x) \geq B(x,A,\lambda) \label{bellman:jump}
  \end{align}
  whenever $x \in [0,1]$ and $0 \leq A \leq 1$. Then $B \in \mathcal{B}$.
\end{theorem}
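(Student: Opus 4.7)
The plan is to split the main inequality into the two cases $\gamma = 0$ and $\gamma = 1$ and observe that case $\gamma = 0$ is literally the \eqref{bellman:concavity} hypothesis, while case $\gamma = 1$ follows by chaining \eqref{bellman:concavity} with \eqref{bellman:jump}.

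More precisely, fix $(x_1, A_1, \lambda), (x_2, A_2, \lambda) \in \Omega_{\mathbb{B}}$ and let $x = (x_1+x_2)/2$, $A = (A_1+A_2)/2$. For $\gamma = 0$ there is nothing to do: the hypothesis $(x, A, \lambda) \in \Omega_{\mathbb{B}}$ holds by convexity of $\Omega_{\mathbb{B}}$, and midpoint concavity of $B(\cdot, \cdot, \lambda)$ gives
\begin{align*}
  B(x, A, \lambda) \geq \tfrac{1}{2}\big(B(x_1, A_1, \lambda) + B(x_2, A_2, \lambda)\big).
\end{align*}

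For $\gamma = 1$, the additional hypothesis $(x, A+1, \lambda + x) \in \Omega_{\mathbb{B}}$ forces $A+1 \leq 2$, hence $A \in [0,1]$; moreover $x \in [0,1]$ since $x_1, x_2 \in [0,1]$. These are exactly the conditions under which \eqref{bellman:jump} applies, so
\begin{align*}
  B(x, A+1, \lambda + x) \geq B(x, A, \lambda) \geq \tfrac{1}{2}\big(B(x_1, A_1, \lambda) + B(x_2, A_2, \lambda)\big),
\end{align*}
where the first inequality is \eqref{bellman:jump} and the second is the previous step.

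Since the obstacle condition is assumed directly, both defining properties of $\mathcal{B}$ hold and $B \in \mathcal{B}$. There is no real obstacle in this proof: the only thing one must check carefully is that the domain restrictions on $x$ and $A$ needed to invoke \eqref{bellman:jump} are automatic from $(x_i, A_i, \lambda), (x, A+1, \lambda+x) \in \Omega_{\mathbb{B}}$, which I verified above.
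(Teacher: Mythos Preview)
Your proof is correct and follows essentially the same route as the paper: handle $\gamma=0$ directly via midpoint concavity, and for $\gamma=1$ first apply \eqref{bellman:jump} at the midpoint $(x,A)$ and then \eqref{bellman:concavity}. Your explicit verification that $(x,A+1,\lambda+x)\in\Omega_{\mathbb{B}}$ forces $A\in[0,1]$ (so that \eqref{bellman:jump} is applicable) is a nice touch the paper leaves implicit.
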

\begin{proof}
  We just need to check \eqref{bellman:mi} with $\gamma = 1$, so let $(x_i, A_i, \lambda) \in \Omega_{\mathbb{B}}$, set $x = (x_1 + x_2)/2$ and $A = (A_1 + A_2)/2$,
  and assume $0 \leq A \leq 1$. Using \eqref{bellman:jump} we have
  \begin{align*}
    B(x,A+1,\lambda+x) &\geq B(x,A,\lambda).
  \end{align*}
  Now, using \eqref{bellman:concavity}:
  \begin{align*}
    B(x,A,\lambda) \geq \frac{1}{2}\sum_{i=1}^2 B(x_i,A_i, \lambda)
  \end{align*}
  which yields the claim.
\end{proof}

Note that the function $B \equiv 1$ trivially belongs to $\mathcal{B}$, so we can restrict our search to functions taking values only in $[0,1]$.
This boundedness allows us to also upgrade mid-point concavity to full concavity:
\begin{proposition}
  Let $B \in \mathcal{B}$ be any function bounded by $1$, then
  $B(\cdot, \cdot, \lambda)$ is continuous and concave for every $\lambda$.
\end{proposition}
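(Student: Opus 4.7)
The plan is to deduce this proposition from the classical Bernstein--Doetsch theorem. The starting point is that specializing the main inequality \eqref{bellman:mi} with $\gamma = 0$ states precisely that, for every fixed $\lambda$, the map $(x, A) \mapsto B(x, A, \lambda)$ is midpoint-concave on the convex set $[0,1] \times [0,2]$.

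Since $0 \leq B \leq 1$ by hypothesis, $B(\cdot, \cdot, \lambda)$ is globally bounded, and in particular locally bounded at every interior point. The Bernstein--Doetsch theorem then guarantees that any midpoint-concave function which is locally bounded on an open convex set is automatically continuous and concave there. Applied to the interior $(0,1) \times (0,2)$, this yields continuity and concavity on the interior.

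To propagate concavity to the closed domain $[0,1] \times [0,2]$, I note that iteration of midpoint-concavity (which holds on the full closed domain) yields $B(tP + (1-t)Q, \lambda) \geq t B(P,\lambda) + (1-t) B(Q,\lambda)$ for every dyadic rational $t \in [0,1]$ and every $P, Q \in [0,1] \times [0,2]$. For an arbitrary $t \in (0,1)$ I approximate by a dyadic sequence $t_n \to t$: when $tP + (1-t)Q$ lies in the interior, the continuity established in the previous step lets me pass to the limit and obtain the concavity inequality for all real $t$. The remaining case, in which the whole segment from $P$ to $Q$ lies on the boundary, is handled by approximating $P$ and $Q$ themselves by interior points $P_\varepsilon, Q_\varepsilon$, applying interior concavity to $P_\varepsilon, Q_\varepsilon$, and taking a limit; the uniform bound $B \leq 1$ prevents any limiting pathology.

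I expect the main subtlety to lie in the boundary behavior: Bernstein--Doetsch delivers continuity and concavity cleanly on the interior, but upgrading to the closed domain requires the above limiting argument and uses crucially the boundedness hypothesis. The heart of the proof is really just recognizing that the $\gamma=0$ part of the main inequality is midpoint-concavity, so that a standard regularity result can be invoked.
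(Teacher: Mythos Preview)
Your approach is essentially the same as the paper's: both recognize that the $\gamma=0$ case of the main inequality is midpoint concavity and then invoke the classical Bernstein--Doetsch theorem that bounded midpoint-concave functions are continuous and concave. The paper's proof is in fact just a one-line citation of this classical fact, while you have unpacked it and added a discussion of the boundary behavior that the paper does not spell out.
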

This theorem is a consequence of the fact that bounded mid-point concave functions are continuous (and hence concave), see page 12 in \cite{MR3363413}.

Without loss of generality, we assume from now on that all functions in $\mcb$ are $[0,1]$-valued.

\section{Constructing the candidate $\bfb$}
\label{S:3}

In this section we construct the candidate Bellman function $\candidateB$.
This will be done by first giving lower bounds for $\mathbb{B}$ along a certain collection of curves,
and then using concavity to fill-in the rest of the domain.

We begin by using the information given by the obstacle condition \eqref{bellman:obstacle}.
We can use inequality \eqref{bellman:jump}, which we will call \emph{the jump inequality},
to ``jump'' a lower bound for $\mathbb{B}(x,A,\lambda)$ to a lower bound for $\mathbb{B}(x, A+1, \lambda + x)$.
\begin{proposition} \label{construction:first_nontrivial}
  For every $0 \leq t \leq x \leq 1$ we have
  \begin{align*}
    \mathbb{B}(x, 1, t) &= 1, \\
    \mathbb{B}(x, 2, 2t) &= 1.
  \end{align*}
\end{proposition}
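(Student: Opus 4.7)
The plan is to obtain both identities by iterating the jump inequality \eqref{bellman:jump} starting from the obstacle condition \eqref{bellman:obstacle}. Since $\mathbb{B}$ measures the Lebesgue measure of a subset of $[0,1)$, the upper bound $\mathbb{B} \leq 1$ is immediate, so in both cases it suffices to show the reverse inequality.

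First I would fix $x \in [0,1]$ and note that by the obstacle condition,
\[
  \mathbb{B}(x, 0, s) = 1 \qquad \text{for every } s \leq 0.
\]
Applying \eqref{bellman:jump} with $A = 0$ (which is allowed since $0 \leq 0 \leq 1$ and $x \in [0,1]$), one obtains
\[
  \mathbb{B}(x, 1, s + x) \geq \mathbb{B}(x, 0, s) = 1
\]
for all $s \leq 0$. Reparametrizing $\mu = s + x$ gives $\mathbb{B}(x, 1, \mu) = 1$ for every $\mu \leq x$, and in particular for $\mu = t$ when $0 \leq t \leq x$. This establishes the first identity.

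For the second identity, I would simply iterate: apply \eqref{bellman:jump} again with $A = 1$, now using the equality $\mathbb{B}(x, 1, \mu) = 1$ just proved, for $\mu \leq x$. This yields
\[
  \mathbb{B}(x, 2, \mu + x) \geq \mathbb{B}(x, 1, \mu) = 1
\]
for all $\mu \leq x$. Setting $\nu = \mu + x$ gives $\mathbb{B}(x, 2, \nu) = 1$ whenever $\nu \leq 2x$, and in particular $\mathbb{B}(x, 2, 2t) = 1$ for $0 \leq t \leq x$, as desired.

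There is no real obstacle in this proof, beyond keeping track of the parameter ranges to ensure the jump inequality is applicable at each step (namely $0 \leq A \leq 1$ and $x \in [0,1]$, both of which are automatic here). The content of the proposition is precisely that the trivial lower bound coming from the obstacle propagates along two successive jumps in the $A$-variable, and this is the mechanism that will be used repeatedly in the rest of the construction of $\candidateB$.
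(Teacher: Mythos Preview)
Your proof is correct and follows essentially the same approach as the paper: both start from the obstacle condition $\mathbb{B}(x,0,s)=1$ for $s\leq 0$ and apply the jump inequality \eqref{bellman:jump} once to obtain $\mathbb{B}(x,1,t)=1$, then once more to obtain $\mathbb{B}(x,2,2t)=1$. The only cosmetic difference is that the paper runs the second chain directly from $\mathbb{B}(x,0,2(t-x))$ through two jumps, whereas you reuse the first identity as an intermediate step; the content is identical.
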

\begin{proof}
  Since $t-x \leq 0$, by \eqref{bellman:obstacle}
  \begin{align*}
    \mathbb{B}(x, 0, t-x) = 1 \stackrel{\eqref{bellman:jump}}{\implies} \mathbb{B}(x,1,t) \geq 1
  \end{align*}

  If we instead start from $2(t-x)$, we can apply \eqref{bellman:jump} twice:
  \begin{align*}
    1 \stackrel{\eqref{bellman:obstacle}}{=} \mathbb{B}(x, 0, 2(t-x)) \stackrel{\eqref{bellman:jump}}{\leq} \mathbb{B}(x, 1, 2t - x) \stackrel{\eqref{bellman:jump}}{\leq} \mathbb{B}(x, 2, 2t).
  \end{align*}
  Recall that the constant function $1$ is in $\mathcal{B}$, so $\mathbb{B} \leq 1$ which provides the equality.
\end{proof}

This proposition gives us the first non-trivial lower bound for $\mathbb{B}$.
Unfortunately, the idea of jumping repeatedly cannot be used alone to obtain more lower bounds, as every time \eqref{bellman:jump} is used the variable $A$ increases by $1$
and we quickly find ourselves outside of the domain.

Keeping in mind that we want to find the smallest concave function satsifying a certain inequality (namely the jump inequality \eqref{bellman:jump}), 
it makes sense to first obtain lower bounds on the parts of the domain where
it is harder to use concavity (i.e.: extreme points, boundary, etc.),
and then constructing a concave function on top of these lower bounds.

With this guiding principle, we will first obtain lower bounds for $\mathbb{B}(1, 2, \lambda)$ and then for $\mathbb{B}(x,2,\lambda)$.
These boundary lower bounds will then be extended to the rest of the domain using concavity.
Since we will be using the restriction of $\mathbb{B}$ to the $A=2$ boundary, we will give the restriction a name:
\begin{align*}
  \mathbb{F}(x,\lambda) = \mathbb{B}(x, 2, \lambda).
\end{align*}
Proposition \ref{construction:first_nontrivial} gives $\mathbb{F}(1, \lambda) = 1$ for $\lambda \leq 2$.
In order to obtain values for larger $\lambda$ we need to use \eqref{bellman:jump}, but to do so we first need to reduce the $A$ variable.
Observe that the function $t \mapsto \mathbb{B}(1, 2t, \lambda)$ is concave, so
\begin{align*}
  \mathbb{B}(1, 1, \lambda) &\geq \frac{1}{2}\mathbb{B}(1, 0, \lambda) + \frac{1}{2}\mathbb{B}(1, 2, \lambda) \\
  &\geq \frac{1}{2}\mathbb{B}(1, 2, \lambda),
\end{align*}
since $\mathbb{B} \geq 0$.
We can now apply \eqref{bellman:jump} to obtain
\begin{align} \label{construction:x=1}
  \mathbb{F}(1, \lambda + 1) \geq \frac{1}{2}\mathbb{F}(1, \lambda).
\end{align}
We thus have
\begin{align*}
  \mathbb{F}(1,\lambda) \geq \begin{cases}
    1 &\text{for all }\lambda \leq 2, \\
    2^{-1} &\text{for all }\lambda \in (2, 3].
  \end{cases}
\end{align*}
Inductively applying \eqref{construction:x=1} we arrive at
\begin{align} \label{construction:x=1:full}
  \mathbb{F}(1,\lambda) \geq \begin{cases}
    1 &\text{for all }\lambda \leq 2, \\
    2^{-m} &\text{for all }\lambda \in (m+1, m+2] \text{ and every integer }m \geq 1.
  \end{cases}
\end{align}

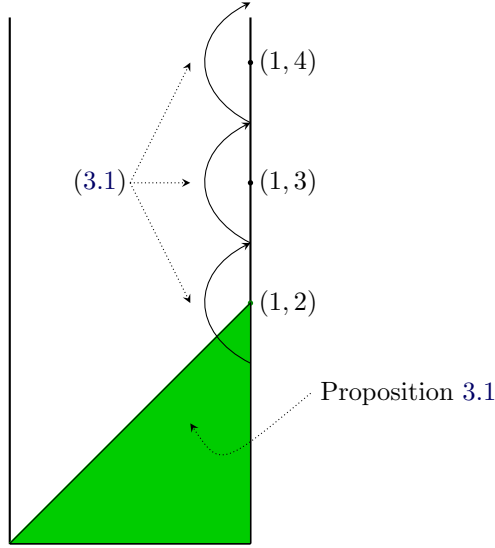
\begin{figure}[H]
  \begin{tikzpicture}[scale=0.2]
    \draw[thick, -] (0, 0) -- (16, 0);
    \draw[thick, -] (16, 0) -- (16, 35);
    \draw[thick, -] (0, 0) -- (0, 35);

    \draw[thick, -, green1] (0, 0) -- (16, 16);

    \node at  (16+2.5, 16) {$(1,2)$};
    \fill[green2] (16, 16) circle (5pt);

    \node at  (16+2.5, 24) {$(1,3)$};
    \fill (16, 24) circle (5pt);

    \node at  (16+2.5, 32) {$(1,4)$};
    \fill (16, 32) circle (5pt);

    \filldraw[fill=green1] (0,0) -- (16, 0) -- (16, 16) -- cycle;
    \draw[densely dotted, -stealth] (20, 10) .. controls (14,5) .. (12, 8);
    \node at  (20+6.5, 10) {Proposition \ref{construction:first_nontrivial}};

    \draw[-stealth] (16, 12) .. controls (12,14) and (12, 18) .. (16, 20);
    \draw[-stealth] (16, 20) .. controls (12,22) and (12, 26) .. (16, 28);
    \draw[-stealth] (16, 28) .. controls (12,30) and (12, 34) .. (16, 36);

    \node at  (6, 24) {\eqref{construction:x=1}};
    \draw[densely dotted, -stealth] (8, 24) to (12, 24);
    \draw[densely dotted, -stealth] (8, 24) to (12, 32);
    \draw[densely dotted, -stealth] (8, 24) to (12, 16);

  \end{tikzpicture}
\caption{Inductive application of \eqref{construction:x=1} to obtain \eqref{construction:x=1:full}.}
  \label{fig:x=1}
\end{figure}

This gives us (actually sharp) lower bounds for $\mathbb{F}(1, \cdot)$. In order to obtain lower bounds for different values of $x$ we need a different approach.
If we want to apply \eqref{bellman:jump} to obtain information about $\mathbb{F}$, we need to apply it to points with $A=1$. Thus, let us introduce the companion to $\mathbb{F}$:
\begin{align*}
  \mathbb{G}(x, \lambda) = \mathbb{B}(x,1,\lambda).
\end{align*}

The jump inequality \eqref{bellman:jump} allows us to transfer lower bounds for $\mathbb{G}$ to lower bounds for $\mathbb{F}$.
We still need to gain information about $\mathbb{G}$. To do this, we can use concavity.
In particular, note that we have
\begin{align} \label{construction:C} \tag{C}
  \mathbb{G}\Big(\frac{x}{2}, \lambda\Big) \geq \frac{1}{2}\mathbb{F}(x, \lambda),
\end{align}
which is just the observation that the function $t \mapsto \mathbb{B}(tx, 2t, \lambda)$ is concave on $[0,1]$.

Thus, we have a way to go from $\mathbb{G}$ to $\mathbb{F}$ \eqref{bellman:jump}, and from $\mathbb{F}$ to $\mathbb{G}$ \eqref{construction:C}.
We will use these two repeatedly, so let us introduce the mappings
\begin{align*}
  J(x,\lambda) = (x, \lambda + x) \qquad \text{and} \qquad C(x,\lambda) = \Big(\frac{x}{2}, \lambda\Big).
\end{align*}
Then
\begin{align*}
  \mathbb{F} \circ J \geq \mathbb{G} \qquad \text{and} \qquad \mathbb{G} \circ C \geq \frac{1}{2}\mathbb{F}.
\end{align*}
Let us apply these operations to the lower bounds given by Proposition \ref{construction:first_nontrivial}.

Proposition \ref{construction:first_nontrivial} gives us the bound
\begin{align*}
  \mathbb{F}(x, 2x) \geq 1.
\end{align*}
After applying \eqref{construction:C}:
\begin{align*}
  \mathbb{G}\Big(\frac{x}{2}, 2x\Big) \geq \frac{1}{2},
\end{align*}
and now applying \eqref{bellman:jump}:
\begin{align*}
  \mathbb{F}\Big(\frac{x}{2}, \frac{5x}{2}\Big) \geq \frac{1}{2}.
\end{align*}
We now have information about $\mathbb{F}$ on two lines, as depicted in Figure \ref{first_two}.
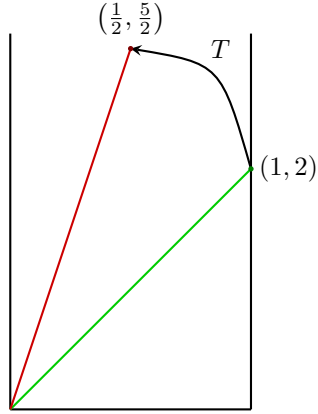
\begin{figure}[H]
  \begin{tikzpicture}[scale=0.2]
    \draw[thick, -] (0, 0) -- (16, 0);
    \draw[thick, -] (16, 0) -- (16, 25);
    \draw[thick, -] (0, 0) -- (0, 25);

    \draw[thick, -, green1] (0, 0) -- (16, 16);
    \draw[thick, -, red1] (0, 0) -- (8, 24);

    \draw[thick, -stealth] (16, 16) .. controls (14,23) .. (8, 24);

    \node at  (16+2.5, 16) {$(1,2)$};
    \node at  (8, 24 + 2) {$\big(\frac{1}{2},\frac{5}{2}\big)$};
    \node at  (14, 24) {$T$};

    \fill[green2] (16, 16) circle (5pt);
    \fill[red2] (8, 24) circle (5pt);
  \end{tikzpicture}
\caption{Lower bounds for $\mathbb{F}$. The green line is given by Proposition \ref{construction:first_nontrivial}, and the red is obtained by applying \eqref{construction:C} and \eqref{bellman:jump}.}
  \label{first_two}
\end{figure}

Since we will be applying $J$ and $C$ repeatedly, let us introduce
\begin{align*}
  T(x,\lambda) &= (J\circ C)(x,\lambda) = \Big(\frac{x}{2}, \lambda + \frac{x}{2}\Big).
\end{align*}

This is a linear mapping taking $[0,1] \times \mathbb{R}$ to itself.
We can interpret $T$ geometrically as taking a point $p = (x,\lambda)$ to the midpoint of $p$ and the intercept of the line through $p$ of slope $-1$ and the $x=0$ axis.
Since it is linear, it maps lines to lines. In particular, if $\ell$ is a line of slope $m$ then $T(\ell)$ is a line of slope $2m +1$.
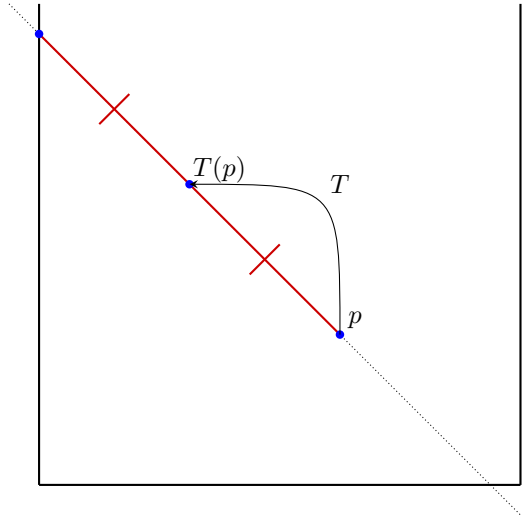
\begin{figure}[H]
  \begin{tikzpicture}[scale=0.2]
    \draw[thick, -] (0, 0) -- (32, 0);
    \draw[thick, -] (32, 0) -- (32, 32);
    \draw[thick, -] (0, 0) -- (0, 32);

    \draw[densely dotted] (-2, 32) -- (32, -2);

    \draw[red1, thick, -] (0, 30) -- (10, 20);
    \draw[red1, thick, -] (10, 20) -- (20, 10);

    \fill[blue] (20, 10) circle (8pt);
    \fill[blue] (0, 30) circle  (8pt);
    \fill[blue] (10, 20) circle (8pt);

    \draw[red1, thick, -] (4, 24) -- (6, 26);
    \draw[red1, thick, -] (14, 14) -- (16, 16);

    \draw[-stealth] (20, 10) .. controls (20, 20) .. (10, 20);

    \node at  (20, 20) {$T$};

    \node at  (21, 11) {$p$};
    \node at  (12, 21) {$T(p)$};
  \end{tikzpicture}
  \caption{Geometric interpretation of $T$.}
  \label{fig:geometric_interpretation_of_T}
\end{figure}

This map can be used to propagate information about $\mathbb{F}$ over its domain, specifically we can use the inequality
\begin{align} \label{construction:T}
  \mathbb{F}(T(x,\lambda)) \geq \frac{1}{2}\mathbb{F}(x,\lambda).
\end{align}
If we apply this inequality repeatedly we arrive the following proposition.
\begin{proposition} \label{construction:first_fan}
  Define the points $F_k$ for $k \geq 0$ as $F_0 = (1,2)$ and
  \begin{align*}
    F_k = \Big(\frac{1}{2^k}, 3 - \frac{1}{2^k} \Big) \quad \text{for }k \geq 1.
  \end{align*}
  Then
  \begin{align*}
    \mathbb{F} \geq 2^{-k} \qquad \text{on the line joining $(0,0)$ and $F_k$}.
  \end{align*}
\end{proposition}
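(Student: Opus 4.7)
The plan is a direct induction on $k$, using the map $T$ to iterate the bound already obtained for $k = 0, 1$.

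First I would set up the base case. Proposition \ref{construction:first_nontrivial} gives $\mathbb{F}(x, 2x) \geq 1$ for all $x \in [0,1]$, which is exactly the statement that $\mathbb{F} \geq 2^0 = 1$ on the segment joining $(0,0)$ and $F_0 = (1,2)$. This establishes the $k=0$ case.

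For the inductive step I would rely on the inequality \eqref{construction:T}: $\mathbb{F}(T(x,\lambda)) \geq \frac{1}{2}\mathbb{F}(x,\lambda)$. Since $T$ is linear and fixes the origin, it maps the segment from $(0,0)$ to any point $P$ onto the segment from $(0,0)$ to $T(P)$. Thus, assuming $\mathbb{F} \geq 2^{-k}$ on the segment joining $(0,0)$ and $F_k$, inequality \eqref{construction:T} immediately yields $\mathbb{F} \geq 2^{-(k+1)}$ on the segment joining $(0,0)$ and $T(F_k)$. The remaining task is to verify $T(F_k) = F_{k+1}$.

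For this, I would simply compute: for $k \geq 1$,
\begin{align*}
  T(F_k) = T\bigg(\frac{1}{2^k}, 3 - \frac{1}{2^k}\bigg) = \bigg(\frac{1}{2^{k+1}}, 3 - \frac{1}{2^k} + \frac{1}{2^{k+1}}\bigg) = \bigg(\frac{1}{2^{k+1}}, 3 - \frac{1}{2^{k+1}}\bigg) = F_{k+1},
\end{align*}
and for $k = 0$, $T(F_0) = T(1, 2) = (1/2, 5/2) = F_1$. This closes the induction.

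There is no real obstacle here; the map $T$ was essentially designed so that $T(F_k) = F_{k+1}$, and the propagation inequality \eqref{construction:T} was already established just before the statement. The only thing worth being slightly careful about is that $T$ acts on the whole segment (not just its endpoint) via linearity, which is immediate since $T$ is linear and $T(0,0) = (0,0)$, so the segment is sent to the segment.
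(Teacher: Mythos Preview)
Your proposal is correct and is essentially identical to the paper's proof: both set up an induction on $k$, invoke \eqref{construction:T} together with the fact that $T$ is linear (hence maps the segment $(0,0)\to F_k$ onto $(0,0)\to T(F_k)$), and finish by the direct computation $T(F_k)=F_{k+1}$. The only cosmetic difference is that the paper treats $k=0$ and $k=1$ as already established in the preceding discussion and inducts from $k\ge 1$, whereas you take $k=0$ as the sole base case and also verify $T(F_0)=F_1$ explicitly.
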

\begin{proof}
  The previous discussion established the inequality for $k=0$ and $k=1$. Assume by induction that it holds for some $k \geq 1$, then
  since the mapping $T$ takes lines to lines, we just need to check that $T(F_k) = F_{k+1}$. Indeed
  \begin{align*}
    T(F_k) &= T\Big(\frac{1}{2^k}, 3 - \frac{1}{2^k} \Big) \\
    &= \Big( \frac{1}{2} \cdot \frac{1}{2^k}, 3 - \frac{1}{2^k} + \frac{1}{2} \cdot \frac{1}{2^k}\Big) \\
    &= \Big(\frac{1}{2^{k+1}}, 3 - \frac{1}{2^{k+1}} \Big) = F_{k+1}.
  \end{align*}
\end{proof}

Let us recapitulate what we have proven so far, and also explain how we can use concavity to fill-in lower bounds for $\mathbb{B}$ in the rest of the domain.
We will use this opportunity to give a complete lower bound for $\mathbb{B}(\cdot, \cdot, \lambda)$ for all $\lambda \in [0,1]$.

Fix $\lambda \in [0,1]$ and consider the two-variable function $\mathbb{B}(\cdot, \cdot, \lambda)$.
Proposition \ref{construction:first_nontrivial} gives us
\begin{align*}
  \mathbb{B}(x, 1, \lambda) &= 1, \\
  \mathbb{B}\bigg(\frac{x}{2}, 2, \lambda\bigg) &= 1,
\end{align*}
for all $x \geq \lambda$. The convex hull of the set given by
\begin{align*}
  \{(x,1):\, x \in [\lambda, 1]\} \cup \bigg\{\bigg( \frac{x}{2}, 2\bigg):\, x \in [\lambda, 1] \bigg\}
\end{align*}
is the trapezoid with vertices $\{(\lambda, 1), (1,1), (1, 2), (\frac{\lambda}{2},2)\}$. This is depicted in Figure \ref{fig:lambda_leq_1}
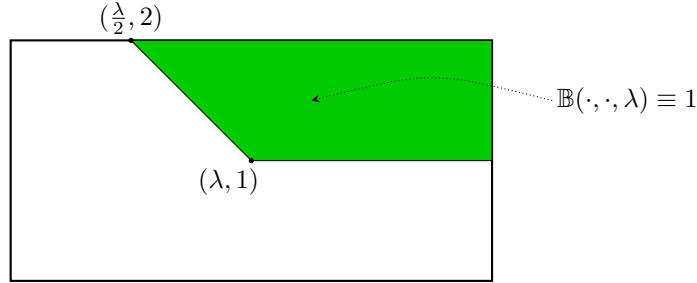
\begin{figure}[H]
  \begin{tikzpicture}[scale=0.2]
    \draw[thick, -] (0, 0) -- (32, 0) -- (32, 16) -- (0, 16) -- cycle;

    \filldraw[fill=green1] (16,8) -- (32, 8) -- (32, 16) -- (8, 16) -- cycle;

    \draw[densely dotted, -stealth] (36, 12) .. controls (28,14) .. (20, 12);
    \node at  (36+5, 12) {$\mathbb{B}(\cdot, \cdot, \lambda) \equiv 1$};

    \fill (8, 16) circle (5pt);
    \node at  (8, 16+1.5) {$(\frac{\lambda}{2},2)$};
    \fill (16, 8) circle (5pt);
    \node at  (16-1.5, 8-1.35) {$(\lambda,1)$};
  \end{tikzpicture}
  \caption{Lower bounds for $\mathbb{B}(\cdot, \cdot, \lambda)$ given by Proposition \ref{construction:first_nontrivial} and concavity.}
  \label{fig:lambda_leq_1}
\end{figure}
One can use this, and concavity again, to complete the lower bounds for $\mathbb{B}(\cdot, \cdot, \lambda)$ in the region of the rectangle $[0,1] \times [0,2]$
lying below the line $A = \frac{4x}{\lambda}$.
Inside the triangle given by the vertices $\{(0,0), (\lambda, 1), (\frac{\lambda}{2}, 2)\}$, we can write any point $(x,\lambda)$ as the convex combination of $(0,0)$ and
a point on the line from $(\lambda, 1)$ to $(\frac{\lambda}{2}, 2)$ (where $\mathbb{B} = 1$), therefore we have
\begin{align*}
  \mathbb{B}(x,A,\lambda) \geq \frac{A + \frac{2x}{\lambda}}{3}
\end{align*}
for all $(x,A) \in \polygon\big((0,0), (\lambda, 1), (\frac{\lambda}{2}, 2)\big)$. A simpler computation shows
\begin{align*}
  \mathbb{B}(x,A,\lambda) \geq A
\end{align*}
for all $(x,A) \in \polygon((0,0), (1,0), (1, 1), (\lambda, 1))$.

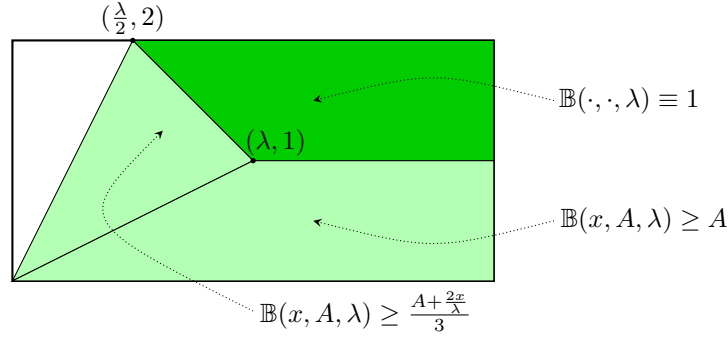
\begin{figure}[H]
  \begin{tikzpicture}[scale=0.2]
    \draw[thick, -] (0, 0) -- (32, 0) -- (32, 16) -- (0, 16) -- cycle;

    \filldraw[fill=green1] (16,8) -- (32, 8) -- (32, 16) -- (8, 16) -- cycle;

    \filldraw[fill=green0] (0,0) -- (16, 8) -- (8, 16) -- cycle;
    \filldraw[fill=green0] (0,0) -- (32, 0) -- (32, 8) -- (16, 8) -- cycle;

    \draw[densely dotted, -stealth] (36, 12) .. controls (28,14) .. (20, 12);
    \node at  (36+5, 12) {$\mathbb{B}(\cdot, \cdot, \lambda) \equiv 1$};

    \draw[densely dotted, -stealth] (36, 4) .. controls (28,2) .. (20, 4);
    \node at  (36+6, 4) {$\mathbb{B}(x, A, \lambda) \geq A$};

    \draw[densely dotted, -stealth] (16, -2) .. controls (4,4) .. (10, 10);
    \node at  (16+7.5, -2) {$\mathbb{B}(x,A,\lambda) \geq \frac{A + \frac{2x}{\lambda}}{3}$};

    \fill (8, 16) circle (5pt);
    \node at  (8, 16+1.5) {$(\frac{\lambda}{2},2)$};
    \fill (16, 8) circle (5pt);
    \node at  (16+1.5, 8+1.25) {$(\lambda,1)$};
  \end{tikzpicture}
  \caption{Further uses of concavity}
  \label{fig:lambda_leq_1:2}
\end{figure}

To fill the remaining part, of the domain, we can use concavity once more. First note that
\begin{align*}
  \mathbb{B}(tx,tA,\lambda) \geq t\mathbb{B}(x,A,\lambda).
\end{align*}
When $A=2$ and $x \leq \frac{\lambda}{2}$ this gives us lower bounds for $\mathbb{B}$ on the remaining part of the domain, provided we can estimate $\mathbb{F}$.
In particular
\begin{align*}
  \mathbb{B}(x,A,\lambda) \geq \frac{A}{2}\mathbb{B}\Big(\frac{2x}{A}, 2, \lambda\Big) = \mathbb{F}\Big(\frac{2x}{A}, \lambda \Big).
\end{align*}

Note that Proposition \ref{construction:first_fan} gives us
\begin{align*}
  \mathbb{F}\bigg(\frac{\lambda}{3\cdot 2^k - 1}, \lambda\bigg) \geq \frac{1}{2^k}.
\end{align*}
Thus, by concavity
\begin{align} \label{construction:lerp1}
  \mathbb{F}(x,\lambda) \geq \iLint[\{(x_k(\lambda), 2^{-k}):\, k \geq 0\}](x),
\end{align}
where
\begin{align*}
  x_k(\lambda) = \frac{\lambda}{3\cdot 2^k - 1}.
\end{align*}
Here we are using $\iLint[\{p_1, p_2, \dots\}]$ to denote the linear interpolation of the points $\{p_1, p_2, \dots\}$, in particular
when $x_1 \leq x_2$, $\iLint$ is given by the point-slope formula
\begin{align*}
  \iLint\big[(x_1, y_1), (x_2, y_2)\big](x) = y_1 + \frac{y_2-y_1}{x_2-x_1}(x-x_1) \qquad x \in [x_1, x_2].
\end{align*}
We can now define our candidate function $\candidateB(\cdot, \cdot, \lambda)$ for $\lambda \in (0,1]$ as
\begin{align*}
  \candidateB(x,A,\lambda) := \begin{cases}
    1 &\text{for $(x,A)$ in } \Sigma_0'(\lambda) \\
    A &\text{for $(x,A)$ in } \Sigma_1(\lambda) \\
    \frac{A+\frac{2x}{\lambda}}{3} &\text{for $(x,A)$ in } \Sigma_0(\lambda) \\
    \frac{A}{2}\candidatef\Big(\frac{2x}{A}, \lambda\Big) &\text{for $(x,A)$ in } \Sigma_2(\lambda) 
  \end{cases}
\end{align*}
where subdomains are the same as in the introduction:
\begin{align*}
  \Sigma_0'(\lambda) &= \polygon\big((\tfrac{\lambda}2, 2), (\lambda, 1), (1,1), (1,2)\big) \\
  \Sigma_0(\lambda) &= \polygon\big((0,0), (\lambda, 1), (\tfrac{\lambda}2, 2)\big) \\
   \Sigma_1(\lambda) &= \polygon\big((0,0), (1,0), (1,1), (\lambda, 1)\big) \\
  \Sigma_2(\lambda) &= \polygon\big((0,0), (\tfrac{\lambda}2, 2), (0,2)\big),
\end{align*}
and
\begin{align} \label{construction:flerp}
  \candidatef(x,\lambda) := \iLint[\{(x_k(\lambda), 2^{-k}):\, k \geq 0\}](x)
\end{align}
for $\lambda \leq 1$.

With this definition we have
\begin{theorem} \label{construction:lambda_leq_1}
  For every $\lambda \in (0, 1]$ and $(x,A) \in [0,1] \times [0,2]$
  \begin{align*}
    \mathbb{B}(x,A,\lambda) \geq \candidateB(x,A,\lambda).
  \end{align*}
\end{theorem}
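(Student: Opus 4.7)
The proof is essentially a bookkeeping exercise that collects the lower bounds established in the preceding discussion, one region at a time. The plan is to verify $\mathbb{B}(x,A,\lambda) \geq \candidateB(x,A,\lambda)$ separately on each of the four regions $\Sigma_0'(\lambda), \Sigma_1(\lambda), \Sigma_0(\lambda), \Sigma_2(\lambda)$, invoking Proposition \ref{construction:first_nontrivial}, Proposition \ref{construction:first_fan}, and the (upgraded) concavity of $\mathbb{B}(\cdot,\cdot,\lambda)$ coming from the proposition immediately before Section \ref{S:3}.

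On $\Sigma_0'$, Proposition \ref{construction:first_nontrivial} gives $\mathbb{B}=1$ on the two segments $\{(x,1):x\in[\lambda,1]\}$ and $\{(x/2,2):x\in[\lambda,1]\}$, whose convex hull is exactly $\Sigma_0'$; since $\mathbb{B}\leq 1$ everywhere, concavity forces $\mathbb{B}\equiv 1$ on $\Sigma_0'$. On $\Sigma_1$, at the four vertices $(0,0),(1,0),(\lambda,1),(1,1)$ we have $\mathbb{B}\geq 0,0,1,1$ respectively (again using Proposition \ref{construction:first_nontrivial} at the two vertices with $A=1$), so by concavity $\mathbb{B}$ dominates the unique affine function through these vertex values, which is simply $A$. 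On $\Sigma_0$, the three vertices $(0,0),(\lambda,1),(\lambda/2,2)$ carry lower bounds $0,1,1$, and concavity of $\mathbb{B}(\cdot,\cdot,\lambda)$ on this triangle gives dominance by the affine interpolant $\tfrac{A+2x/\lambda}{3}$.

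The substantive work is on $\Sigma_2$. Here I will first reduce to a one-variable statement on the boundary: for any $(x,A)\in\Sigma_2$, concavity of $t \mapsto \mathbb{B}(t\cdot\tfrac{2x}{A}, t\cdot 2,\lambda)$ along the ray from the origin (combined with $\mathbb{B}\geq 0$ at the origin) yields
\begin{align*}
  \mathbb{B}(x,A,\lambda) \;\geq\; \frac{A}{2}\,\mathbb{F}\!\left(\frac{2x}{A},\,\lambda\right),
\end{align*}
so it suffices to prove $\mathbb{F}(\cdot,\lambda)\geq \candidatef(\cdot,\lambda)$ on $[0,\lambda/2]$. Proposition \ref{construction:first_fan} gives $\mathbb{F}(x_k(\lambda),\lambda)\geq 2^{-k}$ for every $k\geq 0$, where $x_k(\lambda)=\lambda/(3\cdot 2^k-1)$ is the intersection of the line through $(0,0)$ and $F_k$ with the horizontal level $\lambda$. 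Then concavity of $\mathbb{F}(\cdot,\lambda)$ (together with $\mathbb{F}(0,\lambda)\geq 0$) implies that $\mathbb{F}(\cdot,\lambda)$ dominates the piecewise-linear interpolant of the points $\{(x_k(\lambda),2^{-k}):k\geq 0\}$, which is exactly $\candidatef(\cdot,\lambda)$ by \eqref{construction:flerp}.

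The main obstacle, if any, is purely notational: one must check that the concave combinations used above land inside $\Omega_{\mathbb{B}}$, that the interpolation points $x_k(\lambda)$ are monotone and cluster at $0$ (so the piecewise-linear interpolant is well-defined on all of $[0,\lambda/2]$), and that the formulas agree on the overlaps of the four closed regions (this is immediate at the common vertices where all pieces evaluate to $A$ or $1$). Once these compatibility checks are made, the four-region bound assembles into the pointwise inequality $\mathbb{B}\geq\candidateB$ claimed in Theorem \ref{construction:lambda_leq_1}.
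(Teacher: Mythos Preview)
Your proposal is correct and follows essentially the same route as the paper: the theorem has no separate proof in the paper because the inequality $\mathbb{B}\geq\candidateB$ on each of the four regions $\Sigma_0',\Sigma_1,\Sigma_0,\Sigma_2$ is exactly what the discussion preceding the statement establishes, and you have reproduced that discussion faithfully (Proposition~\ref{construction:first_nontrivial} plus concavity on $\Sigma_0',\Sigma_1,\Sigma_0$; the ray-from-the-origin reduction to $\mathbb{F}$ plus Proposition~\ref{construction:first_fan} and \eqref{construction:lerp1} on $\Sigma_2$). One small wording caveat: on $\Sigma_1$ you refer to ``the unique affine function through these vertex values'', but four points in the plane do not determine a unique affine function; what you actually need (and use) is that $(x,A)\mapsto A$ is \emph{an} affine function matching the vertex bounds, which together with concavity gives $\mathbb{B}\geq A$.
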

It remains to provide lower bounds for $\mathbb{B}$ when $\lambda > 1$.

Proposition \ref{construction:first_fan} already gives some information for $\lambda > 1$, but its domain of application ends at $\lambda = 3$.
We need to way to extend this first ``fan'' of lines further up the domain.

Recall that we had obtained the inequality
\begin{align*}
  \mathbb{F}(x, \lambda + x) \geq \mathbb{G}(x,\lambda)
\end{align*}
for all $x \in [0,1]$. We then obtained a lower bound for $\mathbb{G}$ using the concavity of the function
\begin{align*}
  t \mapsto \mathbb{B}(tx, tA, \lambda).
\end{align*}
In particular, this led to $\mathbb{G}(x, \lambda) \geq \frac{1}{2}\mathbb{F}(2x, \lambda)$, but there is an obvious problem when $x > \frac{1}{2}$ as we would be evaluating $\mathbb{F}$ outside its domain.

We can get around this issue by applying concavity along a different path. In particular, let $\frac{1}{2} \leq x \leq 1$, then
\begin{align*}
  \mathbb{G}(x,\lambda) &= \mathbb{B}\Big(x \cdot 1, x \cdot \frac{1}{x}, \lambda\Big) \\
  &\geq x \mathbb{B}\Big(1, \frac{1}{x}, \lambda\Big) \\
  &= x \mathbb{B}\Big(1, \frac{1}{2x} \cdot 2, \lambda\Big) \\
  &\geq \frac{1}{2} \mathbb{B}(1, 2, \lambda).
\end{align*}
That is
\begin{align} \label{construction:T:extended1}
  \mathbb{F}(x,\lambda + x) \geq \mathbb{G}(x,\lambda) \geq \frac{1}{2}\mathbb{F}(1, \lambda) \quad \text{whenever} \quad \frac{1}{2} \leq x \leq 1.
\end{align}
Combining this with \eqref{construction:T} we can write
\begin{align*}
  \mathbb{F}\Big(\frac{x}{2}, \lambda + \frac{x}{2}\Big) \geq \begin{cases}
    \frac{1}{2}\mathbb{F}(x,\lambda) &\text{if }x \in [0,1] \\
    \frac{1}{2}\mathbb{F}(1,\lambda) &\text{if }x \in [1,2]
  \end{cases}
\end{align*}

This case analysis suggests the following extension of inequality \eqref{construction:T} which may aid the geometric interpretation.
Define the extension $\widetilde{\mathbb{F}}(x,\lambda) = \mathbb{F}(\min(1,x), \lambda)$, then
\begin{align*}
  \widetilde{\mathbb{F}}(T(x,\lambda)) \geq \frac{1}{2}\widetilde{\mathbb{F}}(x,\lambda).
\end{align*}

Recall from \eqref{construction:x=1:full} that we already have a lower bound for $\mathbb{F}(1,\lambda)$, so we can use the inequality above
to obtain
\begin{align} \label{construction:interm1}
  \mathbb{F}(x, 2 + x) \geq \frac{1}{2}\mathbb{F}(1, 2) \geq \frac{1}{2}
\end{align}
for all $\frac{1}{2} \leq x \leq 1$.
This extends the lower bound given by Proposition \ref{construction:first_fan} from $F_1$ to the point $(1, 3)$, as depicted in Figure \ref{fig:extension:1}.
\begin{figure}[H]
  \begin{tikzpicture}[scale=0.1]
    \draw[thick, -] (0, 0) -- (64, 0);
    \draw[thick, -] (64, 0) -- (64, 25);
    \draw[thick, -] (0, 0) -- (0, 25);

    \coordinate (O) at (0, 0);

    \coordinate (F0) at (64, 16);
    \coordinate (F1) at (32, 20);
    \coordinate (F2) at (16, 22);
    \coordinate (F3) at (8, 23);
    \coordinate (F4) at (4, 23.5);
    \coordinate (F5) at (2, 23.75);

    \coordinate (F01) at (64, 24);

    \fill (F0) circle (10pt);
    \fill (F1) circle (10pt);
    \fill (F2) circle (10pt);
    \fill (F3) circle (10pt);
    \fill (F4) circle (10pt);
    \fill (F5) circle (10pt);

    \fill (F01) circle (10pt);

    \draw[irinablue, thick, -] (O) -- (F0);
    \draw[irinablue, thick, -] (O) -- (F1);
    \draw[irinablue, thick, -] (O) -- (F2);
    \draw[irinablue, thick, -] (O) -- (F3);
    \draw[irinablue, thick, -] (O) -- (F4);
    \draw[irinablue, thick, -] (O) -- (F5);

    \draw[irinared2, thick, -] (F1) -- (F01);

    \node at ($(F1) + (-2, 2)$) {$F_1$};
    \node at ($(F01) + (5, 0)$) {$(1,3)$};
    \node at ($(F0) + (5, 0)$) {$(1,2)$};
  \end{tikzpicture}
  \caption{Representation of the first extension \eqref{construction:interm1}.}
  \label{fig:extension:1}
\end{figure}
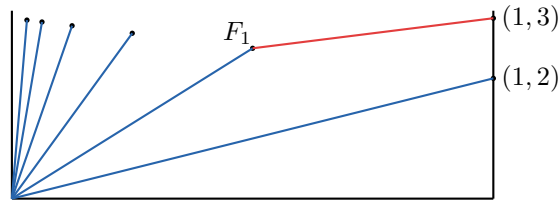

In particular
\begin{align*}
  \mathbb{F} \geq \frac{1}{2} \quad \text{ on the piecewise-linear curve through the points $(0,0) \to F_1 \to (1, 3)$}.
\end{align*}
One can now now use \eqref{construction:T} to extend all the other lines given in Proposition \ref{construction:first_fan}, specifically by iteratively applying $T$ to the segment joining $F_1$ and $(1,3)$.
\begin{figure}[H]
  \begin{tikzpicture}[scale=0.1]
    \draw[thick, -] (0, 0) -- (64, 0);
    \draw[thick, -] (64, 0) -- (64, 25);
    \draw[thick, -] (0, 0) -- (0, 25);

    \coordinate (O) at (0, 0);

    \coordinate (F0) at (64, 16);
    \coordinate (F1) at (32, 20);
    \coordinate (F2) at (16, 22);
    \coordinate (F3) at (8, 23);
    \coordinate (F4) at (4, 23.5);
    \coordinate (F5) at (2, 23.75);

    \coordinate (F01) at (64, 24);
    \coordinate (F11) at (32, 28);
    \coordinate (F21) at (16, 30);
    \coordinate (F31) at (8, 31);
    \coordinate (F41) at (4, 31.5);
    \coordinate (F51) at (2, 31.75);

    \fill (F0) circle (10pt);
    \fill (F1) circle (10pt);
    \fill (F2) circle (10pt);
    \fill (F3) circle (10pt);
    \fill (F4) circle (10pt);
    \fill (F5) circle (10pt);

    \fill (F01) circle (10pt);
    \fill (F11) circle (10pt);
    \fill (F21) circle (10pt);
    \fill (F31) circle (10pt);
    \fill (F41) circle (10pt);

    \draw[irinablue, thick, -] (O) -- (F0);
    \draw[irinablue, thick, -] (O) -- (F1);
    \draw[irinablue, thick, -] (O) -- (F2);
    \draw[irinablue, thick, -] (O) -- (F3);
    \draw[irinablue, thick, -] (O) -- (F4);
    \draw[irinablue, thick, -] (O) -- (F5);

    \draw[irinared2, thick, -] (F1) -- (F01);
    \draw[irinared2, thick, -] (F2) -- (F11);
    \draw[irinared2, thick, -] (F3) -- (F21);
    \draw[irinared2, thick, -] (F4) -- (F31);
    \draw[irinared2, thick, -] (F5) -- (F41);

    \node at ($(F01) + (5, 0)$) {$(1,3)$};
    \node at ($(F0) + (5, 0)$) {$(1,2)$};
  \end{tikzpicture}
  \caption{Iterative application of $T$ leading to the first few extensions.}
  \label{fig:extension:2}
\end{figure}
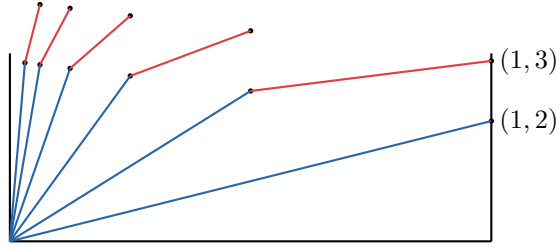

We will use these ideas to produce a ``discrete foliation'' of the $(x,\lambda)$ plane, formed by extending the curves given in Proposition \ref{construction:first_fan}.
In order to state a precise theorem, let us recall the family of points defined in the introduction
\begin{align*}
  F_k^m = \Big( \frac{1}{2^k}, \: m-k+3-\frac{1}{2^{k}} \Big)
\end{align*}
for $0 \leq k \leq m$.
In particular, $F_k = F_k^k$ in the notation of Proposition \ref{construction:first_fan}.

Now define the curve $\Gamma_m$ as the piecewise-linear curve given by
\begin{align*}
  (0,0) \to F_m^m \to \dots \to F_2^m \to F_1^m \to F_0^m.
\end{align*}

\begin{theorem} \label{construction:skeleton}
  Let $\lambda > 0$, then
  \begin{align} \label{construction:full_f}
    \mathbb{F}(x,\lambda) \geq 2^{-m} \quad \text{for every $(x,\lambda)$ on the curve } \Gamma_m.
  \end{align}
\end{theorem}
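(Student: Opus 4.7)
The plan is to proceed by induction on $m \geq 0$. The base case $m = 0$ is immediate, since $\Gamma_0$ is the single segment from $(0,0)$ to $F_0^0 = (1,2)$ and Proposition \ref{construction:first_nontrivial} gives $\mathbb{F}(x,2x) = 1 = 2^{0}$ for every $x \in [0,1]$.

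For the inductive step, the decisive observation is that $T$ acts cleanly on the distinguished vertices: $T(0,0) = (0,0)$ and, for every $0 \leq k \leq m$,
\begin{align*}
  T(F_k^m) = \Big(\tfrac{1}{2^{k+1}},\, m-k+3-\tfrac{1}{2^{k+1}}\Big) = F_{k+1}^{m+1}.
\end{align*}
Since $T$ is linear and maps segments to segments, it carries $\Gamma_m$ onto the piecewise-linear curve through $(0,0) \to F_{m+1}^{m+1} \to F_m^{m+1} \to \dots \to F_1^{m+1}$, which is exactly $\Gamma_{m+1}$ with its final segment $F_1^{m+1} \to F_0^{m+1}$ removed. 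Combining this with \eqref{construction:T} and the induction hypothesis immediately yields $\mathbb{F} \geq \tfrac{1}{2}\cdot 2^{-m} = 2^{-(m+1)}$ on all of $\Gamma_{m+1}$ except possibly that last segment.

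The main point to address is therefore the final segment from $F_1^{m+1} = (1/2, m+5/2)$ to $F_0^{m+1} = (1, m+2)$, which does not lie in $T(\Gamma_m)$. For this I would invoke the extended inequality $\widetilde{\mathbb{F}}(T(x,\lambda)) \geq \tfrac{1}{2}\widetilde{\mathbb{F}}(x,\lambda)$ proved earlier in the section. Inverting $T$ via $T^{-1}(x,\lambda) = (2x,\lambda - x)$ shows that the preimage of this segment is the segment from $(1, m+2)$ to $(2, m+1)$, on which $x \geq 1$ and so $\widetilde{\mathbb{F}}(x,\lambda) = \mathbb{F}(1,\lambda)$. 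As $\lambda$ traverses $[m+1, m+2]$ on this preimage, the lower bound \eqref{construction:x=1:full} gives $\mathbb{F}(1,\lambda) \geq 2^{-m}$ (applying the displayed range for $\lambda \in (m+1, m+2]$, together with the stronger neighboring bound at the endpoint $\lambda = m+1$). Applying the extended inequality then delivers $\mathbb{F} \geq 2^{-(m+1)}$ on the last segment, completing the induction. The delicate part of the argument is verifying the $T$-action on the vertex indexing and confirming that the preimage of the exceptional segment lands exactly where the $x=1$ information from \eqref{construction:x=1:full} is available.
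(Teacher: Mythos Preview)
Your approach is essentially the paper's, just with the induction organized on the curve index $m$ rather than on the segment index $k$ within the curves; both rely on the same two ingredients, namely the map $T$ carrying segments of $\Gamma_m$ to segments of $\Gamma_{m+1}$, and a separate treatment of the topmost segment $[F_1^{m+1}, F_0^{m+1}]$ via the extended inequality for $\widetilde{\mathbb{F}}$ together with \eqref{construction:x=1:full}.

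There is one computational slip: $F_0^{m+1} = \big(1,\,(m+1)+3-1\big) = (1,\,m+3)$, not $(1,\,m+2)$. With this correction the last segment is $\{(t,\,m+2+t):\, t \in [1/2,1]\}$, and its preimage under $T$ is the horizontal segment $\{(s,\,m+2):\, s \in [1,2]\}$, on which $\lambda$ is constant equal to $m+2$ (not ranging over $[m+1,m+2]$ as you wrote). Then $\widetilde{\mathbb{F}}(s,\,m+2) = \mathbb{F}(1,\,m+2) \geq 2^{-m}$ directly from \eqref{construction:x=1:full}, and the extended inequality yields $\mathbb{F} \geq 2^{-(m+1)}$ on that segment exactly as you intended.
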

\begin{proof}
  For every $0 \leq k \leq m$ define the segments
  \begin{align*}
    \ell_k^m = \begin{cases}
      F_k^m \to F_{k+1}^m &\text{if } k < m \\
      F_k^m \to (0,0) &\text{if } k=m.
    \end{cases}
  \end{align*}
  So the curve $\Gamma_m$ consists of the segments $\ell_0^m, \ell_1^m, \dots, \ell_m^m$. We will show
  \begin{align} \label{construction:interm3}
    \mathbb{F}(x,\lambda) \geq 2^{-m-k} \qquad \text{for every $(x,\lambda) \in \ell_k^{m+k}$}
  \end{align}
  for all integers $m,k \geq 0$, which is equivalent to \eqref{construction:full_f}.
  The key idea is that $T$ maps $\ell_k^m$ onto $\ell_{k+1}^{m+1}$, this will allow us to set up an induction argument.

  Proposition \ref{construction:first_nontrivial} provides
  \begin{align*}
    \mathbb{F}(x,\lambda) \geq 1 \qquad \text{for every $(x,\lambda) \in \ell_0^0$}.
  \end{align*}
  We can obtain analogous estimates for $\mathbb{F}$ on $\ell_0^m$ for $m \geq 1$ arguing as above, using inequality \eqref{construction:T:extended1}.
  In particular, from \eqref{construction:T:extended1} and \eqref{construction:x=1:full} we have
  \begin{align*}
    \mathbb{F}(x,1+m + x) &\geq \frac{1}{2}\mathbb{F}(1, 1+m) \geq 2^{-m}
  \end{align*}
  for all intergers $m \geq 1$ and $\frac{1}{2} \leq x \leq 1$.

  Note that the points $\{(x, 1+m+x):\, \frac{1}{2} \leq x \leq 1\}$ form precisely the segment $\ell_0^m$, thus we have shown
  \begin{align} \label{construction:interm2}
    \mathbb{F}(x,\lambda) \geq 2^{-m} \qquad \text{for all $(x,\lambda) \in \ell_0^m$}
  \end{align}
  for every $m \geq 0$. This is our base case.

  As mentioned above, $T$ maps $\ell_k^m$ onto $\ell_{k+1}^{m+1}$, so we can ``jump'' with \eqref{construction:T} lower bounds on $\ell_k^{m+k}$ to lower bounds on $\ell_{k+1}^{m+k+1}$.
  In particular, assume by induction that \eqref{construction:interm3} holds for some $k$ and let $(x,\lambda) \in \ell_{k+1}^{m+k+1}$.
  The preimage $T^{-1}(x,\lambda)$ is contained in $\ell_k^{m+k}$, so by \eqref{construction:T} and the induction hypothesis
  \begin{align*}
    \mathbb{F}(x,\lambda) &\geq \frac{1}{2}\mathbb{F}(T^{-1}(x,\lambda)) \\
    &\geq \frac{1}{2} 2^{-m-k} = 2^{-m-k-1},
  \end{align*}
  which proves \eqref{construction:interm3} for $k+1$ instead of $k$.
\end{proof}
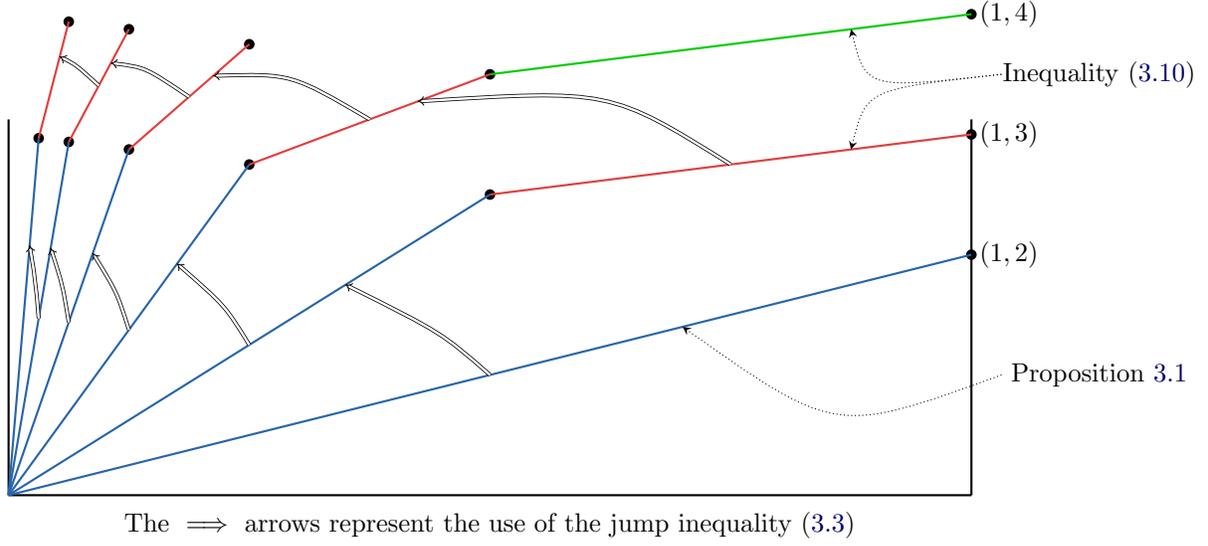
\begin{figure}[H]
  \begin{tikzpicture}[scale=0.2]
    \draw[thick, -] (0, 0) -- (64, 0);
    \draw[thick, -] (64, 0) -- (64, 25);
    \draw[thick, -] (0, 0) -- (0, 25);

    \coordinate (O) at (0, 0);

    \coordinate (F00) at (64, 16);
    \coordinate (F11) at (32, 20);
    \coordinate (F22) at (16, 22);
    \coordinate (F33) at (8, 23);
    \coordinate (F44) at (4, 23.5);
    \coordinate (F55) at (2, 23.75);

    \coordinate (F01) at (64, 24);
    \coordinate (F12) at (32, 28);
    \coordinate (F23) at (16, 30);
    \coordinate (F34) at (8, 31);
    \coordinate (F45) at (4, 31.5);

    \coordinate (F02) at (64, 32);

    \fill (F00) circle (10pt);
    \fill (F11) circle (10pt);
    \fill (F22) circle (10pt);
    \fill (F33) circle (10pt);
    \fill (F44) circle (10pt);
    \fill (F55) circle (10pt);

    \fill (F01) circle (10pt);
    \fill (F12) circle (10pt);
    \fill (F23) circle (10pt);
    \fill (F34) circle (10pt);
    \fill (F45) circle (10pt);

    \fill (F02) circle (10pt);

    \draw[irinablue, thick, -] (O) -- (F00);
    \draw[irinablue, thick, -] (O) -- (F11);
    \draw[irinablue, thick, -] (O) -- (F22);
    \draw[irinablue, thick, -] (O) -- (F33);
    \draw[irinablue, thick, -] (O) -- (F44);
    \draw[irinablue, thick, -] (O) -- (F55);

    \draw[irinared2, thick, -] (F11) -- (F01);
    \draw[irinared2, thick, -] (F22) -- (F12);
    \draw[irinared2, thick, -] (F33) -- (F23);
    \draw[irinared2, thick, -] (F44) -- (F34);
    \draw[irinared2, thick, -] (F55) -- (F45);

    \draw[green1, thick, -] (F12) -- (F02);

    \node at ($(F02) + (2.5, 0)$) {$(1,4)$};
    \node at ($(F01) + (2.5, 0)$) {$(1,3)$};
    \node at ($(F00) + (2.5, 0)$) {$(1,2)$};

    %

    \draw[line width=0.3pt, double distance=0.9pt, arrows = {-Implies[]}] ($0.5*(O)+0.5*(F00)$) .. controls ($0.3*(F00) + 0.3*(F11)$) .. ($0.3*(O) + 0.7*(F11)$);
    \draw[line width=0.3pt, double distance=0.9pt, arrows = {-Implies[]}] ($0.5*(O)+0.5*(F11)$) .. controls ($0.3*(F11) + 0.3*(F22)$) .. ($0.3*(O) + 0.7*(F22)$);
    \draw[line width=0.3pt, double distance=0.9pt, arrows = {-Implies[]}] ($0.5*(O)+0.5*(F22)$) .. controls ($0.3*(F22) + 0.3*(F33)$) .. ($0.3*(O) + 0.7*(F33)$);
    \draw[line width=0.3pt, double distance=0.9pt, arrows = {-Implies[]}] ($0.5*(O)+0.5*(F33)$) .. controls ($0.3*(F33) + 0.3*(F44)$) .. ($0.3*(O) + 0.7*(F44)$);
    \draw[line width=0.3pt, double distance=0.9pt, arrows = {-Implies[]}] ($0.5*(O)+0.5*(F44)$) .. controls ($0.3*(F44) + 0.3*(F55)$) .. ($0.3*(O) + 0.7*(F55)$);

    \draw[line width=0.3pt, double distance=0.9pt, arrows = {-Implies[]}] ($0.5*(F11)+0.5*(F01)$) .. controls ($0.25*(F01) + 0.75*(F12)$) .. ($0.3*(F22) + 0.7*(F12)$);
    \draw[line width=0.3pt, double distance=0.9pt, arrows = {-Implies[]}] ($0.5*(F22)+0.5*(F12)$) .. controls ($0.225*(F12) + 0.725*(F23)$) .. ($0.3*(F33) + 0.7*(F23)$);
    \draw[line width=0.3pt, double distance=0.9pt, arrows = {-Implies[]}] ($0.5*(F33)+0.5*(F23)$) .. controls ($0.2125*(F23) + 0.7125*(F34)$) .. ($0.3*(F44) + 0.7*(F34)$);
    \draw[line width=0.3pt, double distance=0.9pt, arrows = {-Implies[]}] ($0.5*(F44)+0.5*(F34)$) .. controls ($0.20625*(F34) + 0.70625*(F45)$) .. ($0.3*(F55) + 0.7*(F45)$);

    \draw[densely dotted, -stealth] (66,8) .. controls (55, 4) .. ($0.3*(O) + 0.7*(F00)$);
    \node at ($(66,8) + (6.5,0)$) {Proposition \ref{construction:first_nontrivial}};

    \node at ($(66,28) + (6.5,0)$) {Inequality \eqref{construction:interm2}};
    \draw[densely dotted, -stealth] (66,28) .. controls (57, 27) .. ($0.25*(F11) + 0.75*(F01)$);
    \draw[densely dotted, -stealth] (66,28) .. controls (57, 27) .. ($0.25*(F12) + 0.75*(F02)$);

    \node at ($(32,-2) + (0,0)$) {The $\implies$ arrows represent the use of the jump inequality \eqref{construction:T}};
  \end{tikzpicture}
  \caption{Proof of Theorem \ref{construction:skeleton}}
  \label{fig:skeleton_proof}
\end{figure}

This theorem is the ``skeleton'' from which lower bounds for $\mathbb{B}$ follow for $\lambda > 1$.
First, we can use concavity to reduce matters to obtaining bounds for $\mathbb{F}$.
The next proposition, whose proof has been implicitly used in the arguments above, encapsulates this reduction
\begin{proposition}
  \begin{align*}
    \mathbb{B}(x,A,\lambda) \geq \begin{cases}
      \frac{A}{2}\mathbb{F}\big(\frac{2x}{A}, \lambda\big) &\text{if } 2x \leq A,\\
      \frac{A}{2}\mathbb{F}(1, \lambda) &\text{otherwise}.
    \end{cases}
  \end{align*}
\end{proposition}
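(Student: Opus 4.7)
The plan is to derive both cases from the concavity of $\mathbb{B}(\cdot,\cdot,\lambda)$ on the domain $[0,1]\times[0,2]$, combined with the non-negativity of $\mathbb{B}$ (recall we may assume $\mathbb{B}$ is bounded and hence fully concave in $(x,A)$ by the proposition preceding this construction section). In both cases the strategy is to express the point $(x,A)$ as a convex combination of points where we already have information, one of which lies on the boundary $A=2$ (so that the value of $\mathbb{B}$ there is precisely $\mathbb{F}$).

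For the first case, $2x\le A$, the point $(2x/A,2)$ lies in $[0,1]\times\{2\}$, so I would write
\[
(x,A) = \tfrac{A}{2}\bigl(\tfrac{2x}{A},2\bigr) + \bigl(1-\tfrac{A}{2}\bigr)(0,0),
\]
which is a legitimate convex combination since $A\in[0,2]$. Concavity of $\mathbb{B}(\cdot,\cdot,\lambda)$ together with $\mathbb{B}(0,0,\lambda)\ge 0$ then gives the desired bound $\mathbb{B}(x,A,\lambda)\ge \tfrac{A}{2}\mathbb{F}\bigl(\tfrac{2x}{A},\lambda\bigr)$ in one line.

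For the second case, $2x>A$, the natural target $(2x/A,2)$ has first coordinate exceeding $1$, so it is outside the domain. The remedy, already used in the derivation of \eqref{construction:T:extended1}, is a two-step application of concavity. First, since $A<2x$ we have $A/x<2$, so $(1,A/x)\in[0,1]\times[0,2]$; decomposing $(x,A) = x\cdot(1,A/x) + (1-x)\cdot(0,0)$ and using concavity with $\mathbb{B}(0,0,\lambda)\ge 0$ yields
\[
\mathbb{B}(x,A,\lambda) \;\ge\; x\,\mathbb{B}(1,A/x,\lambda).
\]
Second, decomposing $(1,A/x) = \tfrac{A}{2x}(1,2) + \bigl(1-\tfrac{A}{2x}\bigr)(1,0)$ and again using concavity (in the $A$ variable this time) together with $\mathbb{B}(1,0,\lambda)\ge 0$ gives $\mathbb{B}(1,A/x,\lambda) \ge \tfrac{A}{2x}\,\mathbb{F}(1,\lambda)$. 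Multiplying these two inequalities produces $\mathbb{B}(x,A,\lambda)\ge \tfrac{A}{2}\mathbb{F}(1,\lambda)$, as required.

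There is no real obstacle here: the whole content is concavity of $\mathbb{B}(\cdot,\cdot,\lambda)$, which has already been established, and the only thing to be slightly careful about is making sure the intermediate points chosen lie in the admissible rectangle $[0,1]\times[0,2]$. The split of the statement into two cases is exactly dictated by whether the one-step decomposition through $(2x/A,2)$ is admissible or whether one must route through $(1,A/x)$ first.
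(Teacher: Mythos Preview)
Your proof is correct and follows essentially the same approach as the paper: the paper phrases the argument as concavity along the parametrized paths $t\mapsto(x_0t,2t)$ in the first case and $t\mapsto(t,At/x)$ followed by $t\mapsto(1,2t)$ in the second, which is exactly your convex-combination decomposition through $(0,0)$ and $(2x/A,2)$, respectively through $(0,0)$, $(1,A/x)$, and $(1,2)$, $(1,0)$.
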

\begin{proof}
  Assume first that $2x \leq A$, then the function
  \begin{align*}
    t \mapsto \mathbb{B}(x_0 t, 2t, \lambda)
  \end{align*}
  is non-negative and concave in $t \in [0,1]$ for every fixed $x_0 \in [0,1]$, thus
  \begin{align*}
    \mathbb{B}(x_0 t, 2t, \lambda) \geq t \mathbb{B}(x_0, 2, \lambda) = t \mathbb{F}(x_0, \lambda).
  \end{align*}
  Now choose $x_0$ and $t$ so that $(x_0t, 2t) = (x,A)$, i.e.:
  \begin{align*}
    t = \frac{A}{2} \qquad \text{and} \qquad x_0 = \frac{2x}{A}
  \end{align*}
  and the claim follows since $t,x_0 \in [0,1]$.

  When $2x > A$ we can argue as in the proof of \eqref{construction:T:extended1}:
  \begin{align*}
    \mathbb{B}(x,A,\lambda) &= \mathbb{B}\Big(x\cdot 1, x \cdot \frac{A}{x}, \lambda\Big) \\
    &\geq x \mathbb{B}\Big(1, \frac{A}{x}, \lambda) \\
    &= x \mathbb{B}\Big(1, \frac{A}{2x} \cdot 2, \lambda) \\
    &\geq x \frac{A}{2x} \mathbb{B}(1,2\lambda) \\
    &= \frac{A}{2}\mathbb{F}(1,\lambda),
  \end{align*}
  where we have used the concavity of $\mathbb{B}(\cdot, \cdot, \lambda)$ along the paths parametrized by $(t, At/x)$ and $(1, 2t)$
  for $t \in [0,1]$.
\end{proof}

\begin{figure}[H]
  \begin{center}
    \includegraphics[width=8cm]{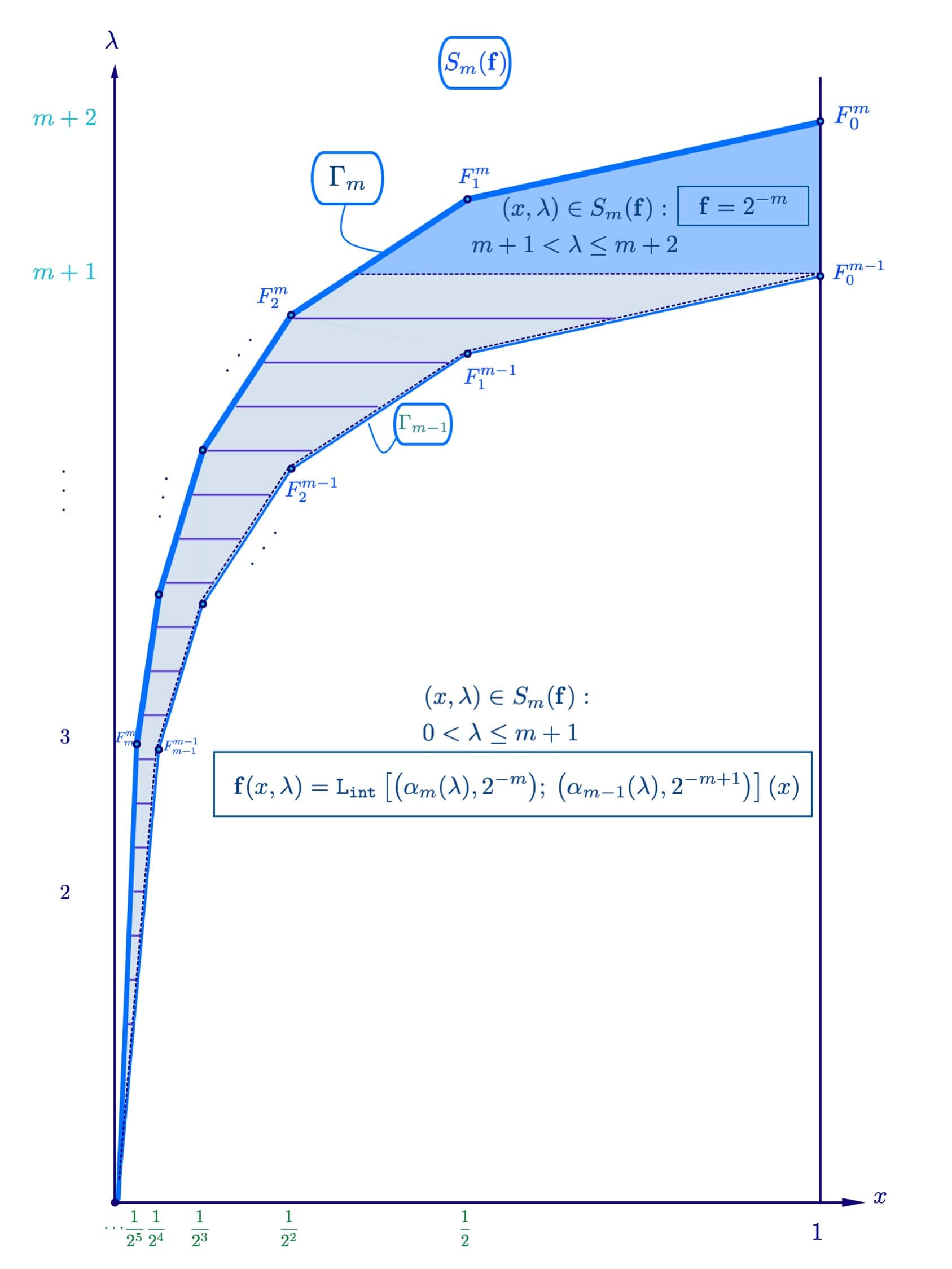}
  \end{center}
  \caption{Sketch of the function $\candidatef$ on $S_m(\candidatef)$.}
  \label{fig:xlplanes-f}
\end{figure}

It remains to fill-in the lower bounds for $\mathbb{F}$ given in Theorem \ref{construction:skeleton}.
The idea is to use the concavity of $\mathbb{F}(\cdot, \lambda)$ in the same way we did when proving \eqref{construction:lerp1}. 
To this end, let $\gamma_m : [0,1] \to \mathbb{R}$ be the function whose graph is $\Gamma_m$, i.e.: the piecewise-linear function
with vertices
\begin{align*}
  \gamma_m(2^{-k}) = m-k+3-\frac{1}{2^k}
\end{align*}
for $0 \leq k \leq m$. Also, let $\alpha_m : [0, m+2] \to [0,1]$ be the inverse function of $\gamma_m$.

Define now the subdomains
\begin{align*}
  S_m(\candidatef) = \begin{cases}
    \{(x,\lambda) \in (0,1] \times \mathbb{R}_{\geq 0}:\, \lambda \leq \gamma_0(x)\} &\text{if }m = 0 \\
    \{(x,\lambda) \in (0,1] \times \mathbb{R}_{\geq 0}:\, \gamma_{m-1}(x) < \lambda \leq \gamma_m(x)\} &\text{if }m \geq 1
  \end{cases}
\end{align*}
Finally, for $\lambda >1$ define $\candidatef(x,\lambda)$ (recall that we had already defined this for $\lambda \leq 1$) as
follows: when $x = 0$ set $\candidatef(x,\lambda) = 0$, set $\candidatef(x,\lambda) = 1$ for all $(x,\lambda) \in S_0$, and
\begin{align} \label{e:f-val-sm}
  \candidatef(x,\lambda) := \begin{cases}
    2^{-m} &\text{if } m + 1< \lambda \leq m+2 \\
    \iLint\big[(\alpha_m(\lambda), 2^{-m}), (\alpha_{m-1}(\lambda), 2^{-m+1})\big](x) &\text{otherwise},
  \end{cases}
\end{align}
when $(x,\lambda) \in S_m(\candidatef)$ with $m \geq 1$ (See Figure \ref{fig:xlplanes-f}).

\begin{theorem} \label{construction:lambda_gt_1}
  \begin{align*}
    \mathbb{F}(x,\lambda) \geq \candidatef(x,\lambda)
  \end{align*}
  for all $0 \leq x \leq 1$ and $\lambda > 0$.
\end{theorem}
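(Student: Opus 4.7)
The plan is to combine the pointwise lower bounds from Theorem \ref{construction:skeleton} with the concavity of $x \mapsto \mathbb{F}(x,\lambda)$ on $[0,1]$, which is inherited from the concavity of $\mathbb{B}(\cdot,\cdot,\lambda)$ established earlier. Since $\candidatef(\cdot,\lambda)$ is by construction the linear interpolation along horizontal lines between the scaffolding curves $\Gamma_m$, and we already control $\mathbb{F}$ from below at the relevant interpolation endpoints, the argument reduces to applying the standard ``concave function lies above its chord'' principle in each of the subdomains $S_m(\candidatef)$.

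To carry this out, fix $\lambda > 0$ and $x \in (0,1]$ and let $m \geq 0$ be the unique integer with $(x,\lambda) \in S_m(\candidatef)$. When $m = 0$ we have $\lambda \leq \gamma_0(x) = 2x$, so Proposition \ref{construction:first_nontrivial} applied with $t = \lambda/2 \leq x$ immediately gives $\mathbb{F}(x,\lambda) = 1 = \candidatef(x,\lambda)$. For $m \geq 1$ in the ``generic'' subcase $\lambda \leq m+1$, the horizontal line through $(x,\lambda)$ meets both $\Gamma_m$ at $x_0 := \alpha_m(\lambda)$ and $\Gamma_{m-1}$ at $x_1 := \alpha_{m-1}(\lambda)$, and the containment $(x,\lambda) \in S_m(\candidatef)$ forces $x_0 \leq x \leq x_1$. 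Theorem \ref{construction:skeleton} yields $\mathbb{F}(x_0,\lambda) \geq 2^{-m}$ and $\mathbb{F}(x_1,\lambda) \geq 2^{-(m-1)}$, so concavity of $\mathbb{F}(\cdot,\lambda)$ provides
\[
\mathbb{F}(x,\lambda) \geq \iLint\bigl[(x_0,2^{-m}),\,(x_1,2^{-(m-1)})\bigr](x) = \candidatef(x,\lambda).
\]

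In the remaining ``boundary'' subcase $m+1 < \lambda \leq m+2$, the curve $\Gamma_{m-1}$ never reaches height $\lambda$, so the horizontal line meets $\Gamma_m$ only at $x_0 := \alpha_m(\lambda)$, while the endpoint $(1,\lambda)$ lies on the top segment $\ell_0^m \subseteq \Gamma_m$. Theorem \ref{construction:skeleton} then gives $\mathbb{F}(x_0,\lambda) \geq 2^{-m}$ and $\mathbb{F}(1,\lambda) \geq 2^{-m}$, and concavity on $[x_0,1]$ (which contains $x$) forces $\mathbb{F}(x,\lambda) \geq 2^{-m} = \candidatef(x,\lambda)$. I do not anticipate any serious obstacle here: the argument is purely a concavity-plus-interpolation step on top of Theorem \ref{construction:skeleton}. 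The only care needed is to verify that the horizontal intersection endpoints match exactly the interpolation nodes used to define $\candidatef$ on each $S_m(\candidatef)$, which is essentially the bookkeeping already built into the definitions of $\alpha_m$ and $S_m(\candidatef)$.
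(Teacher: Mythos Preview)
Your overall strategy matches the paper's proof exactly: split into the subdomains $S_m(\candidatef)$, handle $m=0$ directly, and for $m\geq 1$ use concavity of $\mathbb{F}(\cdot,\lambda)$ together with the lower bounds on the scaffolding curves. The generic subcase $\lambda \leq m+1$ is fine and identical to the paper.

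There is, however, a genuine slip in the boundary subcase $m+1 < \lambda \leq m+2$. You assert that ``the endpoint $(1,\lambda)$ lies on the top segment $\ell_0^m \subseteq \Gamma_m$'' and then invoke Theorem~\ref{construction:skeleton} to get $\mathbb{F}(1,\lambda) \geq 2^{-m}$. But $\ell_0^m$ is the segment from $F_0^m = (1,\,m+2)$ to $F_1^m = (\tfrac12,\, m+\tfrac32)$, which is a slanted line; the only point on $\Gamma_m$ with $x$-coordinate equal to $1$ is $F_0^m$ itself. So for $\lambda \in (m+1,\,m+2)$ the point $(1,\lambda)$ does \emph{not} lie on $\Gamma_m$, and Theorem~\ref{construction:skeleton} does not apply to it. The bound $\mathbb{F}(1,\lambda) \geq 2^{-m}$ is nonetheless true, but it comes from the separate estimate \eqref{construction:x=1:full} (obtained earlier by iterating $\mathbb{F}(1,\lambda+1) \geq \tfrac12\mathbb{F}(1,\lambda)$), and this is exactly what the paper cites at that step. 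Once you replace the incorrect geometric claim by a reference to \eqref{construction:x=1:full}, your argument is complete and coincides with the paper's.
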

\begin{proof}
  Let $(x,\lambda) \in S_m(\candidatef)$. When $m = 0$ then one can write $(x,\lambda)$ as a convex combination of a point $(x_0,\lambda)$
  on $\Gamma_0$ and $(1,\lambda)$, on both of these points $\mathbb{F} \geq 1$ so we can conclude that $\mathbb{F}(x,\lambda) \geq \candidatef(x,\lambda)$.

  Suppose now that $m \geq 1$ and that $\lambda \leq m+1$. We can write $(x,\lambda)$ as a convex combination of a
  point $(\alpha_m(\lambda), \lambda) \in \Gamma_m$ and a point $(\alpha_{m-1}(\lambda), \lambda) \in \Gamma_{m-1}$. The values of
  $\mathbb{F}$ at these points are at least $2^{-m}$ and $2^{-m+1}$ respectively by Theorem \ref{construction:skeleton}, so
  \begin{align*}
    \mathbb{F}(x,\lambda) \geq \iLint\big[(\alpha_m(\lambda), 2^{-m}), (\alpha_{m-1}(\lambda), 2^{-m+1})\big](x) = \candidatef(x,\lambda).
  \end{align*}

  Finally, if $m < \lambda \leq m+1$ then we can write $(x,\lambda)$ as the convex combination of a point
  $(\alpha_m(\lambda), \lambda)$ and a point $(1, \lambda)$. By Theorem \ref{construction:skeleton} and \eqref{construction:x=1:full}
  the value of $\mathbb{F}$ at both of these points is at least $2^{-m}$, so by concavity we conclude $\mathbb{F}(x,\lambda) \geq 1 = \candidatef(x,\lambda)$
  and we are done.
\end{proof}

We can now complete the definition of $\candidateB$ for $\lambda > 1$:
\begin{align*}
  \candidateB(x,A,\lambda) = \begin{cases}
    \frac{A}{2}\candidatef\big(\frac{2x}{A}, \lambda\big) &\text{if } 2x \leq A,\\
    \frac{A}{2}\candidatef(1,\lambda) &\text{otherwise}.
  \end{cases}
\end{align*}

The combination of Theorem \ref{construction:lambda_leq_1} and Theorem \ref{construction:lambda_gt_1} yields
\begin{corollary}
  \begin{align*}
    \mathbb{B}(x,A,\lambda) \geq \candidateB(x,A,\lambda)
  \end{align*}
  for all $(x,A,\lambda) \in \Omega_{\mathbb{B}}$.
\end{corollary}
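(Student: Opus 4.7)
The plan is to observe that the corollary is essentially a bookkeeping exercise that glues together the three cases $\lambda \leq 0$, $0 < \lambda \leq 1$, and $\lambda > 1$, each of which has already been handled. So my proof would just perform this case analysis cleanly.

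First, for $\lambda \leq 0$ the obstacle condition \eqref{bellman:obstacle} gives $\mathbb{B}(x,A,\lambda) = 1$, which matches the value $\candidateB(x,A,\lambda) = 1$ assigned on this part of the domain. Second, for $0 < \lambda \leq 1$ the inequality is exactly the statement of Theorem \ref{construction:lambda_leq_1}, so nothing further is required.

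The only case needing a couple of lines is $\lambda > 1$. Here one combines the proposition preceding Theorem \ref{construction:lambda_gt_1} (which bounds $\mathbb{B}(x,A,\lambda)$ below by $\tfrac{A}{2}\mathbb{F}(\tfrac{2x}{A},\lambda)$ when $2x \leq A$ and by $\tfrac{A}{2}\mathbb{F}(1,\lambda)$ otherwise) with Theorem \ref{construction:lambda_gt_1}, which provides $\mathbb{F} \geq \candidatef$ pointwise. Composing these two inequalities gives, in both subcases of the definition of $\candidateB$ for $\lambda > 1$, the required bound $\mathbb{B}(x,A,\lambda) \geq \candidateB(x,A,\lambda)$.

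There is no real obstacle in this step; the hard work has already been carried out in Theorems \ref{construction:lambda_leq_1} and \ref{construction:lambda_gt_1}, together with the concavity-based reduction from $\mathbb{B}$ to $\mathbb{F}$. The proof is thus a one-paragraph case split followed by an application of the two theorems. The only thing worth double-checking is that the piecewise definition of $\candidateB$ in the $\lambda > 1$ regime is consistent with how $\candidatef$ has been extended by the constant value $\candidatef(1,\lambda)$ to the right of $x = 1$; this consistency is built into the statement of the reduction proposition, so no additional argument is needed.
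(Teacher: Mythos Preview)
Your proposal is correct and matches the paper's approach: the paper records this corollary as an immediate consequence of Theorems \ref{construction:lambda_leq_1} and \ref{construction:lambda_gt_1} (together with the reduction proposition and the obstacle condition for $\lambda\le 0$), and your case split simply makes that combination explicit.
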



\section{Verifying the Main Inequality}
\label{S:4}

In this section we show that $\bfb\in\mcb$.
Since $\bfb$ satisfies the Obstacle Condition by construction, we are left to verify that $\candidateB$ satisfies the jump inequality \eqref{bellman:jump} and that $\bfb(\cdot,\cdot,\lambda)$ is concave.
We will introduce the function $\candidateg$, which is defined analogously to how $\mathbb{G}$ was defined in terms of $\mathbb{F}$:
\begin{align*}
  \candidateg(x,\lambda) := \candidateB(x, 1, \lambda).
\end{align*}

\subsection{The restricted jump inequality}

In this subsection we prove the special case of \eqref{bellman:jump} when $A=1$. Later in the section we will reduce showing that $\candidateB$
satisfies the full jump inequality to this restricted case.
\begin{proposition}
  \label{P:reduction}
 The function $\candidateB(\cdot,\lambda)$ satisfies the jump inequality:
  \begin{equation}
    \label{e:Ja2}
    \candidatef(x,\lambda+x)\geq \candidateg(x,\lambda).
  \end{equation}
\end{proposition}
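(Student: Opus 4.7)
My plan is to do case analysis on the explicit formula for $\candidateg(x,\lambda) = \candidateB(x,1,\lambda)$. Reading off the piecewise definition at $A = 1$, this function takes three qualitatively different forms: \textbf{(a)} $\candidateg \equiv 1$ when $\lambda \leq 0$ or when $\lambda \in (0, 1]$ and $x \geq \lambda$ (the $\Sigma_1$ and $\Sigma_0'$ cases); \textbf{(b)} the affine form $\candidateg = (1 + 2x/\lambda)/3$ when $\lambda \in (0,1]$ and $\lambda/4 \leq x \leq \lambda$ (the $\Sigma_0$ case); \textbf{(c)} the self-referential form $\candidateg = \tfrac{1}{2}\candidatef(\min(2x, 1), \lambda)$ in the remaining region (the $\Sigma_2$ case and its $\lambda > 1$ analogue).

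Case \textbf{(a)} is immediate: $\lambda \leq x$ forces $\lambda + x \leq 2x = \gamma_0(x)$, so $(x, \lambda+x) \in S_0(\candidatef)$ and $\candidatef = 1$ there. Case \textbf{(c)}, putting $p = (2x, \lambda)$ and recalling the affine map $T(x_0, \lambda_0) = (x_0/2, \lambda_0 + x_0/2)$ from Section 3, becomes precisely the $T$-inequality
\begin{align*}
  \candidatef(T(p)) \geq \tfrac{1}{2} \candidatef(p),
\end{align*}
together with its $2x > 1$ extension of the type \eqref{construction:T:extended1}. Case \textbf{(b)} I plan to handle by direct calculation: at the endpoints $x = \lambda$ and $x = \lambda/4$ the point $(x, \lambda + x)$ lies respectively on $\Gamma_0$ and on the first segment $\ell_1^1$ of $\Gamma_1$ (whose equation is $\lambda = 5x$), giving $\candidatef(x, \lambda + x) = 1$ and $1/2$ in agreement with the endpoint values of the linear $\candidateg$. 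For interior $x$ one computes $\candidatef(x, \lambda + x)$ from horizontal interpolation inside $S_1(\candidatef)$, and the difference $\candidatef(x,\lambda+x) - \candidateg(x,\lambda)$ factors as a positive multiple of $-(4x - \lambda)(x - \lambda)$, which is non-negative on $[\lambda/4, \lambda]$.

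The heart of the argument is thus the $T$-inequality for $\candidatef$. By Theorem \ref{construction:skeleton} and the identity $T(\ell_k^m) = \ell_{k+1}^{m+1}$ at the core of the construction, $T$ carries $\Gamma_m$ onto a sub-curve of $\Gamma_{m+1}$, on which $\candidatef$ drops from $2^{-m}$ to $2^{-(m+1)}$, so equality holds on the scaffolding. The main obstacle will be extending this off the scaffolding: $\candidatef$ is defined between consecutive $\Gamma_m$'s by horizontal linear interpolation, but $T$ rotates horizontal segments into slope-$+1$ segments. My plan is to write $p \in S_m$ as an affine combination $p = t p_+ + (1-t) p_-$ of its horizontal projections $p_\pm$ onto $\Gamma_m$ and $\Gamma_{m-1}$; since $T$ is affine, $T(p) = tT(p_+) + (1-t)T(p_-)$ with $T(p_-) \in \Gamma_m$ and $T(p_+) \in \Gamma_{m+1}$, at which $\candidatef$ takes exactly the halved values. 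The required inequality then reduces to a geometric statement: $\candidatef$ at the point $T(p)$ lying on the oblique segment $[T(p_-), T(p_+)] \subset S_{m+1}$ dominates the linear interpolation of the halved endpoint values along this segment. I intend to verify this by tracking where the segment crosses the horizontal scaffolding of $S_{m+1}$ and applying the concavity of $\candidatef(\cdot, \mu)$ in $x$ (visible from the decreasing slopes of the piecewise-linear interpolation of the breakpoint values $2^{-k}$ at $\alpha_k(\mu)$).
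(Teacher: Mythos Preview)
Your cases (a) and (b) are fine, and the reduction of case (c) to the $T$-inequality $\candidatef(T(p)) \geq \tfrac12\candidatef(p)$ (together with the $2x>1$ variant) is correct and equivalent to what the paper does after a change of variables: the paper works with $J$ on the $\candidateg$-domain, you work with $T = J\circ C$ on the $\candidatef$-domain. The equality on the scaffolding $\Gamma_m$ is also clear.

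The gap is in your plan for the $T$-inequality \emph{off} the scaffolding. You propose to write $p\in S_m$ as the horizontal combination $tp_+ + (1-t)p_-$, observe that $T(p)$ sits on the slope-$1$ segment $[T(p_-),T(p_+)]\subset S_{m+1}$ (this containment itself needs an argument, incidentally), and then conclude that $\candidatef(T(p))$ dominates the chord $t\cdot 2^{-(m+1)} + (1-t)\cdot 2^{-m}$ by ``applying the concavity of $\candidatef(\cdot,\mu)$ in $x$.'' But horizontal concavity controls $\candidatef$ along lines of constant $\lambda$, whereas your segment is oblique: both coordinates move. Unwinding your target, it is equivalent to the assertion that
\[
\mu \;\longmapsto\; \frac{(\mu-c)-\alpha_{m+1}(\mu)}{\alpha_m(\mu)-\alpha_{m+1}(\mu)}
\]
lies above its secant on $[\mu_+,\mu_-]$. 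This is a statement about the \emph{joint} dependence of $\candidatef$ on $(x,\lambda)$ in each cell, not about concavity in $x$ alone, and it does not follow from Theorem~\ref{T:bff-concave}.

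What actually makes the argument go through is the observation the paper isolates in Lemma~\ref{L:Lint1} and Corollary~\ref{C:JumpLint}: inside each trapezoidal cell of $S_m$ (bounded by a linear piece of $\Gamma_{m-1}$ and a linear piece of $\Gamma_m$), $\candidatef$ is constant along rays emanating from the common intersection point of those two line-pieces; the map $T$ sends such a ray of reciprocal-slope $a$ to a ray of reciprocal-slope $a/(2+a)$ in the corresponding cell of $S_{m+1}$; and the inequality you need is then exactly the concavity of $a\mapsto a/(2+a)$. This is the content of the paper's angle-sector machinery (written there for $J$ and $\varphi(a)=a/(1+a)$, but the same computation works for $T$). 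Your reorganization is fine, but it does not bypass this step; the missing ingredient is precisely identifying the ray-structure of $\candidatef$ in each cell and the concavity of the induced parameter map, not horizontal concavity of $\candidatef$.
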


We follow a geometric strategy: roughly speaking, we will partition the domain of $\bfg$ into certain subsets of angle sectors. The specific structure of $\bfg$ and $\bff$ will ensure that the jump inequality is satisfied between angle sectors in the $\bfg$-domain and their images in the $\bff$-domain. We begin with some definitions and geometrical lemmas in the $(x,\lambda)$-plane.

\begin{figure}[H]
\centering
\begin{subfigure}{.24\textwidth}
  \centering
  \includegraphics[width=\linewidth]{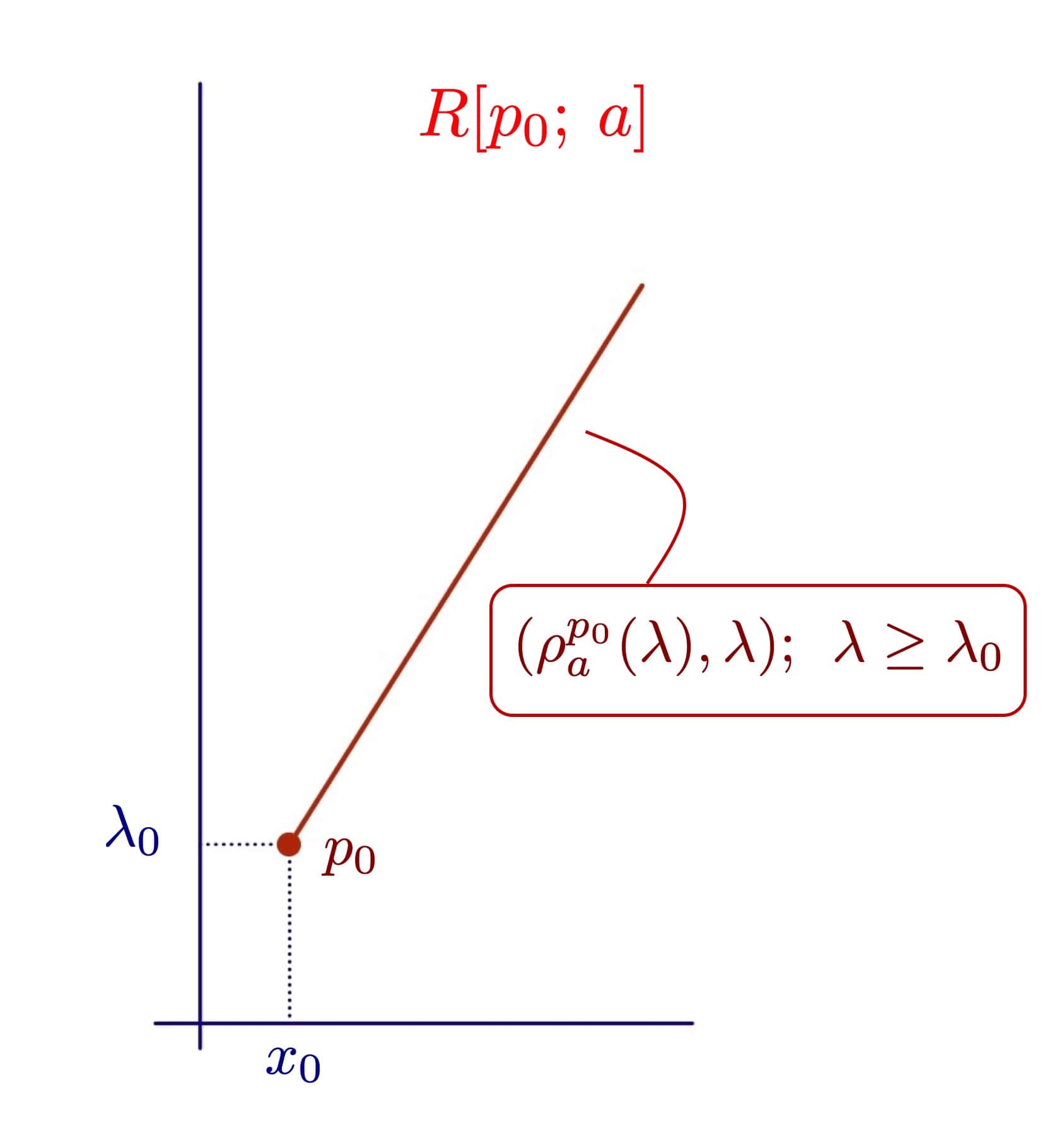}
 \caption{Ray.}
  \label{fig:Jpic1}
\end{subfigure}%
\begin{subfigure}{.24\textwidth}
  \centering
  \includegraphics[width=\linewidth]{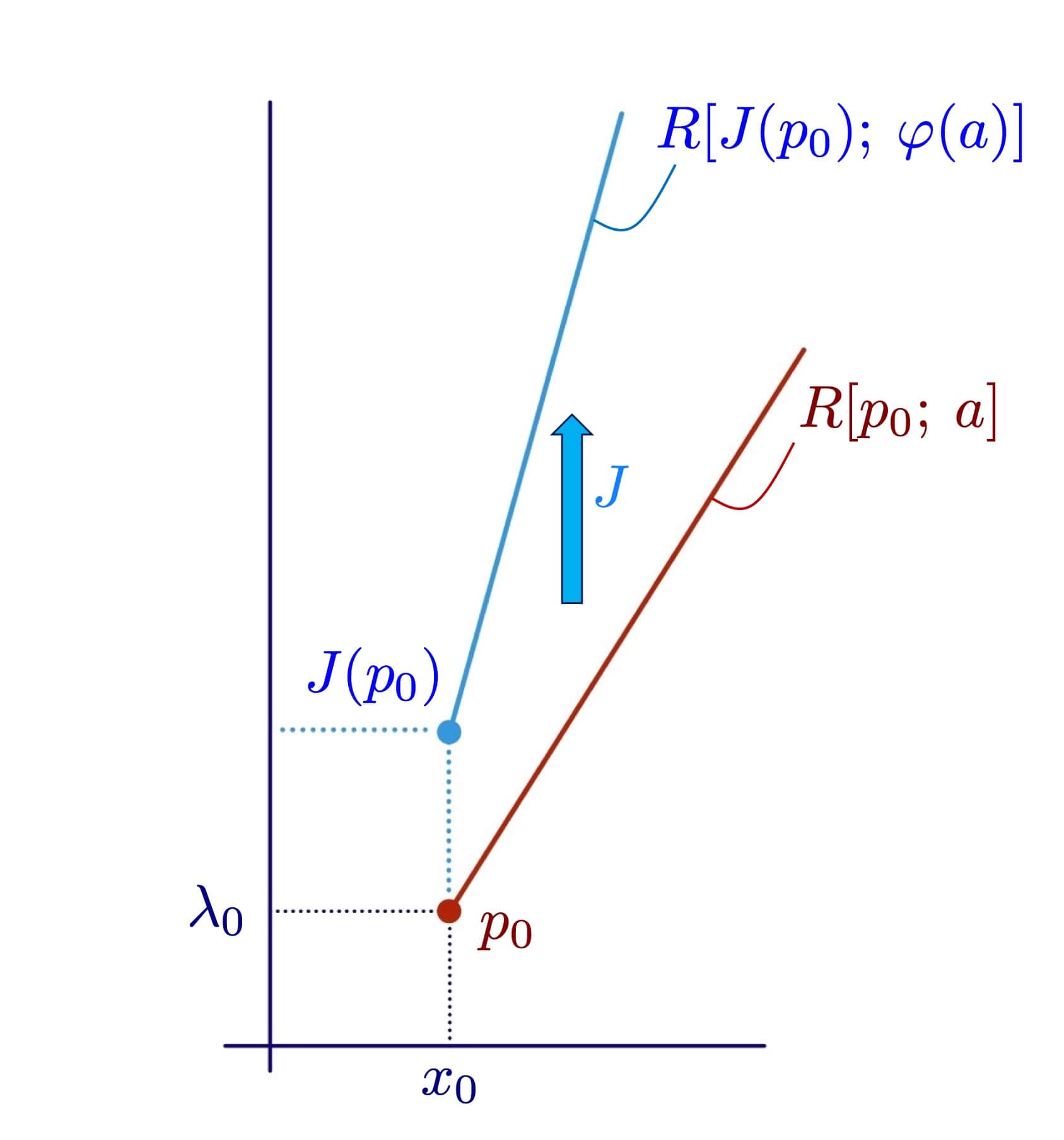}
  \caption{Jump of a ray.}
  \label{fig:Jpic3}
\end{subfigure}
\begin{subfigure}{.24\textwidth}
  \centering
  \includegraphics[width=\linewidth]{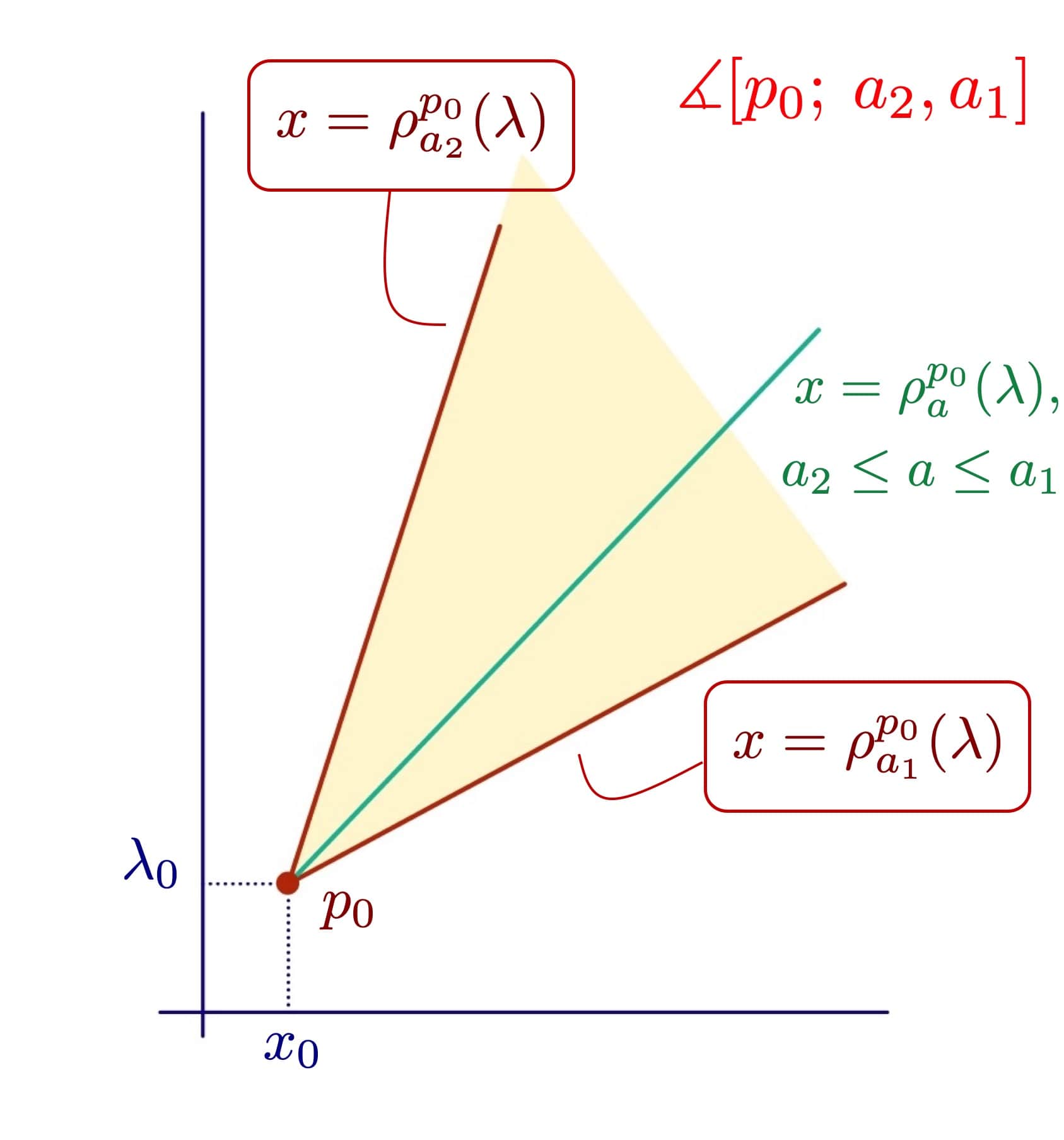}
 \caption{Angle sector.}
  \label{fig:Jpic2}
\end{subfigure}%
\begin{subfigure}{.24\textwidth}
  \centering
  \includegraphics[width=\linewidth]{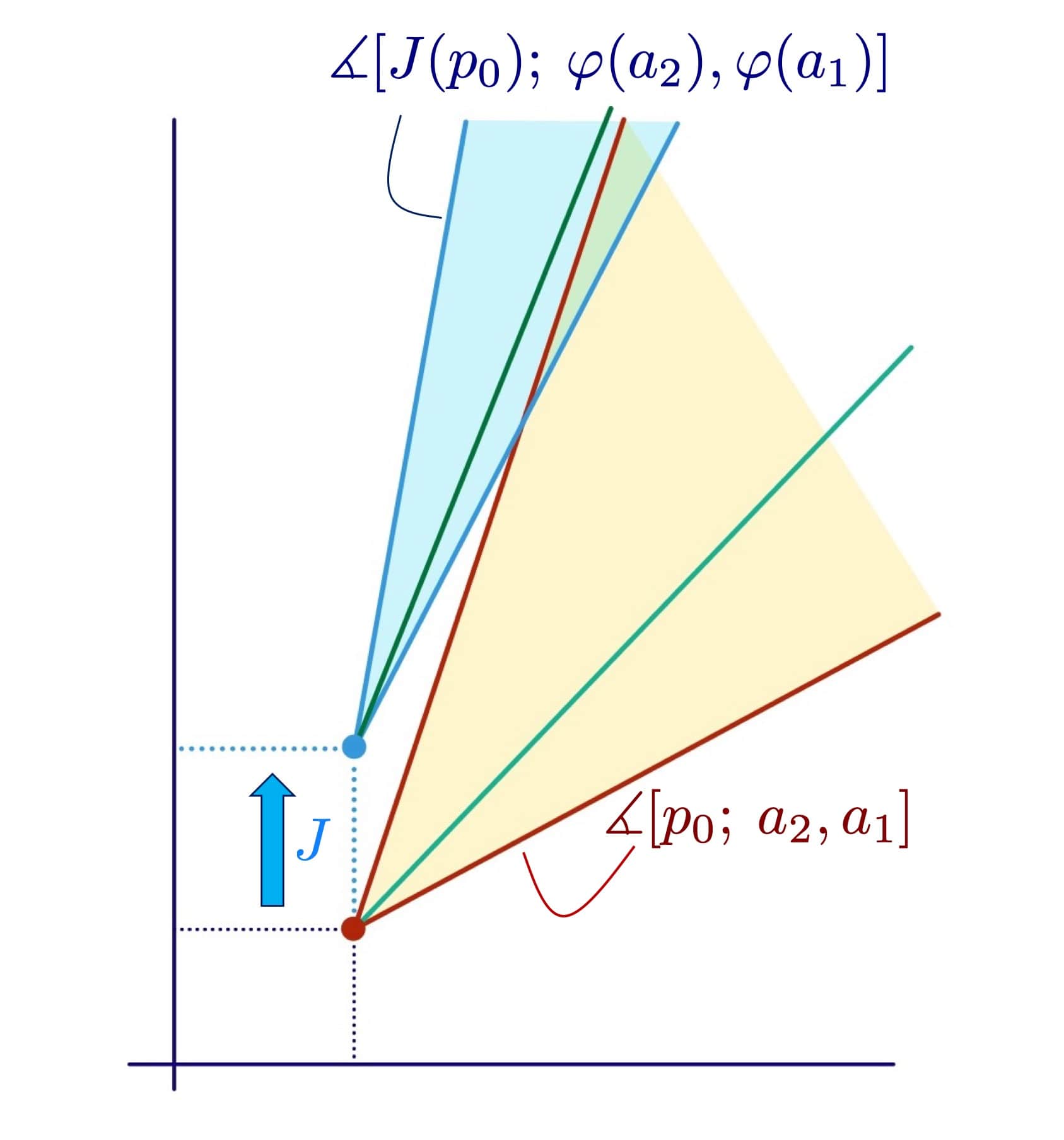}
 \caption{Jump of angle sector.}
  \label{fig:Jpic4}
\end{subfigure}%
\caption{}
\label{fig:Jpics}
\end{figure}

\begin{definition}
Let the \textbf{ray centered at a point $p_0 = (x_0, \lambda_0)$ with parameter $a>0$} (see Figure \ref{fig:Jpic1}) be the set
	$$
	R[p_0; a] := \{(x,\lambda): x=\rho_a^{p_0}(\lambda); \:\: \lambda \geq \lambda_0\},
	$$
where
	$$
	\rho_a^{p_0}(\lambda) := a(\lambda-\lambda_0)+x_0; \:\: \lambda \geq \lambda_0.
	$$
\end{definition}
\noindent Note that $a$ is just the reciprocal of the slope (seen as a function of $x$).

\begin{definition}
Let the \textbf{angle sector centered at $p_0$ with parameters $0<a_2<a_1$} (see Figure \ref{fig:Jpic2}) be the ``fan'' of rays $R[p_0; a]$
centered at $p_0$ with parameter $a_2\leq a\leq a_1$:
\begin{align*}
  \measuredangle[p_0; \: a_2, a_1] := \{ (x, \lambda): \rho_{a_2}^{p_0}(\lambda) \leq x \leq \rho_{a_1}^{p_0}(\lambda); \: \lambda \geq \lambda_0 \}.
\end{align*}
\end{definition}

\begin{lemma}
\label{L:JumpedRays}
For a point $p_0$ in the plane and a parameter $a>0$:
	$$
	J \big( R[p_0; a] \big) = R\big[J(p_0); \varphi(a)\big],
	$$
where
	$$
	\varphi(a) := \frac{a}{1+a}.
  $$
  In other words, $J$ takes a ray centered at $p_0$ with parameter $a>0$ and maps it to the ray centered at $J(p_0)$ with parameter $\varphi(a)$.
\end{lemma}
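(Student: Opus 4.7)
The statement is a direct consequence of the definitions, so the plan is essentially an algebraic verification packaged around the parametrization of rays. I would work coordinate by coordinate and then check the parameter range at the end.

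First, I would parametrize: a point on $R[p_0; a]$ has the form $(x,\lambda)$ with $x = a(\lambda-\lambda_0)+x_0$ for $\lambda \geq \lambda_0$. Equivalently $\lambda - \lambda_0 = (x-x_0)/a$. Applying $J$ yields the point $(x,\lambda')$ where
\begin{align*}
\lambda' = \lambda + x = \lambda_0 + \frac{x-x_0}{a} + x.
\end{align*}
Writing $p_0' := J(p_0) = (x_0, \lambda_0+x_0)$, a quick rearrangement gives
\begin{align*}
\lambda' - (\lambda_0 + x_0) = \frac{x-x_0}{a} + (x-x_0) = \frac{1+a}{a}\,(x-x_0),
\end{align*}
so $x - x_0 = \frac{a}{1+a}\,(\lambda' - (\lambda_0+x_0)) = \varphi(a)\,(\lambda'-(\lambda_0+x_0))$. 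This is exactly the equation defining $R[p_0'; \varphi(a)]$, which establishes the inclusion $J(R[p_0;a]) \subseteq R[J(p_0); \varphi(a)]$.

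For the reverse inclusion, I would run the argument backwards: given any $(x,\lambda') \in R[J(p_0); \varphi(a)]$, define $\lambda := \lambda' - x$. The above identity shows $x - x_0 = a(\lambda - \lambda_0)$, i.e.\ $(x,\lambda)$ lies on $R[p_0;a]$, and clearly $J(x,\lambda) = (x,\lambda')$. The only thing left to verify is the matching of the half-line conditions $\lambda \geq \lambda_0$ and $\lambda' \geq \lambda_0 + x_0$. Since $a > 0$, the equation $x - x_0 = a(\lambda-\lambda_0)$ shows that $\lambda \geq \lambda_0$ is equivalent to $x \geq x_0$, and by $\lambda' - (\lambda_0+x_0) = \frac{1+a}{a}(x-x_0)$, this is in turn equivalent to $\lambda' \geq \lambda_0+x_0$. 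So the two parametrizing ranges correspond.

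There is no real obstacle here; the lemma is a one-line computation once one identifies $\lambda' = \lambda+x$ as the new height coordinate. The only thing worth being a little careful about is confirming that the \emph{entire} ray (both endpoint and unboundedness) maps bijectively onto the entire target ray, which is why I would explicitly handle the reverse inclusion and the range of $\lambda$ rather than just compute the slope transformation.
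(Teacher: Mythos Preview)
Your proof is correct and follows essentially the same direct computation as the paper: parametrize the ray, apply $J$, and rewrite the result in the form $\rho_{\varphi(a)}^{J(p_0)}$. The paper's version is slightly terser (it does not separately argue the reverse inclusion or the matching of the half-line conditions), but the underlying algebra is identical.
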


\begin{proof}
The image of a line under the affine transformation $J$ is the line with the same $\lambda$-intercept but slope increased by $1$.
  By direct calculation,
  \begin{equation*}
    \begin{split}
      J\big[(\rho_a^{p_0}(\lambda), \lambda):\: \lambda\geq \lambda_0\big] & = \big[(\rho_a^{p_0}(\lambda), \lambda + \rho_a^{p_0}(\lambda)): \: \lambda\geq \lambda_0 \big] \\
      & = \big[ \big(\varphi(a)(\lambda-(x_0+\lambda_0))+x_0, \: \lambda\big) :\: \lambda\geq x_0+\lambda_0\big] \\
      & = \big[\big(\rho_{\varphi(a)}^{J(p_0)}(\lambda), \lambda\big):\: y\geq x_0+y_0\big].
    \end{split}
  \end{equation*}
\end{proof}
From this we immediately we have the corresponding result for angle sectors.
This is the key tool used to prove \eqref{e:Ja2}.
\begin{corollary}
  \label{C:JumpedAngles} For a point $p_0$ and parameters $0<a_2<a_1$:
  $$
  J\big(\measuredangle[p_0; a_2, a_1]\big) = \measuredangle \big[J(p_0); \varphi(a_2), \varphi(a_1)\big].
  $$
\end{corollary}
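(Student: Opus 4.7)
The plan is to deduce this directly from Lemma \ref{L:JumpedRays} by viewing the angle sector as a one-parameter family of rays. More precisely, by definition we have the decomposition
\begin{equation*}
  \measuredangle[p_0; a_2, a_1] = \bigcup_{a \in [a_2, a_1]} R[p_0; a],
\end{equation*}
so applying the set-image $J(\cdot)$ commutes with this union and Lemma \ref{L:JumpedRays} gives
\begin{equation*}
  J\big(\measuredangle[p_0; a_2, a_1]\big) = \bigcup_{a \in [a_2, a_1]} R\big[J(p_0); \varphi(a)\big].
\end{equation*}

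To identify this union with the claimed angle sector, I need to check that $\varphi:(0,\infty)\to(0,1)$ is strictly increasing and continuous, so that $\varphi\big([a_2,a_1]\big) = \big[\varphi(a_2), \varphi(a_1)\big]$. This is immediate from $\varphi(a) = 1 - \tfrac{1}{1+a}$, which is a composition of elementary monotone functions (equivalently $\varphi'(a) = (1+a)^{-2} > 0$). Hence
\begin{equation*}
  \bigcup_{a \in [a_2, a_1]} R\big[J(p_0); \varphi(a)\big] = \bigcup_{b \in [\varphi(a_2), \varphi(a_1)]} R\big[J(p_0); b\big] = \measuredangle\big[J(p_0); \varphi(a_2), \varphi(a_1)\big],
\end{equation*}
which is the desired identity.

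There is no real obstacle here beyond bookkeeping; the statement is a straightforward upgrade of Lemma \ref{L:JumpedRays} from a single ray to a continuous family, and the only substantive ingredient is the monotonicity of $\varphi$ that reorders the endpoints correctly.
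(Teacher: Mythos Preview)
Your proof is correct and follows exactly the approach the paper intends: the paper states the corollary as an immediate consequence of Lemma \ref{L:JumpedRays} without giving further details, and your argument supplies precisely those details by writing the angle sector as the union $\bigcup_{a\in[a_2,a_1]} R[p_0;a]$, applying the lemma raywise, and using the monotonicity of $\varphi$ to recover the interval $[\varphi(a_2),\varphi(a_1)]$.
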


\begin{lemma} \label{L:Lint1}
Let a point $p_0$, parameters $0<a_2<a_1$, and real numbers $v_1, v_2$. Define the function 
	$$
	L : \measuredangle[p_0; a_2, a_1] \rightarrow \mbr,
	$$  
 by
	$$
	L(x, \lambda) := \iLint\big[\big(\rho_{a_2}^{p_0}(\lambda), v_2\big); \: \big(\rho_{a_1}^{p_0}(\lambda), v_1\big)\big](x),
	$$
for all $\lambda\geq \lambda_0$, $\rho_{a_2}^{p_0}(\lambda) \leq x \leq \rho_{a_1}^{p_0}(\lambda)$. Then $L$ is constant along rays centered at $p_0$, and its value along such a ray is 
	\begin{equation} \label{e:JLintRays}
	L\big(\rho_a^{p_0}(\lambda), \lambda\big) = (v_1-v_2) \frac{a-a_2}{a_1-a_2} +v_2, \: \text{ for all } a_2\leq a\leq a_1, \: \lambda\geq \lambda_0.
	\end{equation}
\end{lemma}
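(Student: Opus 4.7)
The proof is essentially a direct calculation, so the plan is to simply unpack the definition of $L$ and substitute the parametrization of a ray. First I would write out the linear interpolation explicitly via the point-slope formula given earlier in the introduction:
\begin{align*}
L(x,\lambda) \;=\; v_2 \;+\; \frac{v_1-v_2}{\rho_{a_1}^{p_0}(\lambda)-\rho_{a_2}^{p_0}(\lambda)}\bigl(x-\rho_{a_2}^{p_0}(\lambda)\bigr).
\end{align*}
The key observation is that $\rho_a^{p_0}(\lambda) = a(\lambda - \lambda_0) + x_0$ is affine in $a$, so for any $a \in [a_2, a_1]$ and any $\lambda > \lambda_0$ one has
\begin{align*}
\rho_a^{p_0}(\lambda) - \rho_{a_2}^{p_0}(\lambda) = (a - a_2)(\lambda - \lambda_0), \qquad \rho_{a_1}^{p_0}(\lambda) - \rho_{a_2}^{p_0}(\lambda) = (a_1 - a_2)(\lambda - \lambda_0).
\end{align*}
Substituting $x = \rho_a^{p_0}(\lambda)$ into the interpolation formula, the common factor $(\lambda - \lambda_0)$ cancels in the quotient, leaving precisely $\frac{a-a_2}{a_1-a_2}$ as the interpolation coefficient, which no longer depends on $\lambda$. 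This immediately yields \eqref{e:JLintRays} and shows that $L$ is constant along each ray $R[p_0;a]$.

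The tip $\lambda = \lambda_0$ needs a brief separate comment: there all rays collapse to the single point $p_0$, so the rays trivially cannot disagree at that point, and continuity of $L$ (which follows from the explicit formula above) handles the extension as $\lambda \downarrow \lambda_0$. There is no real obstacle in the proof; the lemma is a clean geometric manifestation of the fact that the family of rays $\{R[p_0;a]\}$ foliates the angle sector with parameter $a$ depending affinely on $x$ along each horizontal slice, so linear interpolation in $x$ agrees with linear interpolation in $a$.
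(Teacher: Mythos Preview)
Your proof is correct and follows essentially the same approach as the paper: write out the interpolation via the point-slope formula, substitute $x=\rho_a^{p_0}(\lambda)$, and observe that the common factor $(\lambda-\lambda_0)$ cancels to yield \eqref{e:JLintRays}. Your extra remark about the tip $\lambda=\lambda_0$ is a reasonable clarification, though the paper does not bother with it.
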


\begin{proof}
  Let $(x, \lambda)$ be a point on the ray $R[p_0; a]$, for some $a_2\leq a \leq a_1$. Then
  $$
  L(x,\lambda) = \frac{v_1-v_2}{\rho_{a_1}^{p_0}(\lambda) - \rho_{a_2}^{p_0}(\lambda)} (\rho_a^{p_0}(\lambda) - \rho_{a_2}^{p_0}(\lambda)) +v_2,
  $$
  which simplifies to precisely \eqref{e:JLintRays}
\end{proof}

\begin{corollary} \label{C:JumpLint}
  Let $p_0$ be a point in the plane, parameters $0<a_2<a_1$, and positive real numbers $0<v_2<v_1$. Define the functions $g$ and $f$ as:
  $$
  g:\measuredangle[p_0; a_2, a_1] \rightarrow \mbr; \:\:
  g(x, \lambda) := \iLint\big[(\rho_{a_2}^{p_0}, v_2); \: (\rho_{a_1}^{p_0}, v_1)\big].
  $$
  $$
  f: J\big(\measuredangle[p_0; a_2, a_1]\big) \rightarrow \mbr; \:\:
  f(x, \lambda) := \iLint\big[(\rho_{\varphi(a_2)}^{J(p_0)}, v_2); \: (\rho_{\varphi(a_1)}^{J(p_0)}, v_1)\big].
  $$
  Then
  $$
  f\big(J(x, \lambda)\big) \geq g(x,\lambda),
  $$
  for all $(x, \lambda) \in \measuredangle[p_0; a_2, a_1]$.
\end{corollary}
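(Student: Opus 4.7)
The plan is to apply Lemma \ref{L:Lint1} to both $g$ and $f$ in order to reduce the inequality $f(J(x,\lambda)) \geq g(x,\lambda)$ to a single scalar inequality about the map $\varphi(a) = a/(1+a)$, which will then follow from concavity of $\varphi$.

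First, by Lemma \ref{L:Lint1} the function $g$ is constant along each ray $R[p_0;a]$, with value
\[
  g(x,\lambda) = v_2 + (v_1 - v_2)\frac{a - a_2}{a_1 - a_2}
\]
for $(x,\lambda) \in R[p_0;a]$ and $a \in [a_2, a_1]$. Applying the same lemma to $f$ on the jumped angle sector $\measuredangle[J(p_0); \varphi(a_2), \varphi(a_1)]$ yields the analogous expression along rays $R[J(p_0); \tilde a]$:
\[
  f(x',\lambda') = v_2 + (v_1 - v_2)\frac{\tilde a - \varphi(a_2)}{\varphi(a_1) - \varphi(a_2)}.
\]
By Lemma \ref{L:JumpedRays}, if $(x,\lambda) \in R[p_0;a]$ then $J(x,\lambda) \in R[J(p_0); \varphi(a)]$; substituting $\tilde a = \varphi(a)$ in the expression for $f$ and using $v_1 - v_2 > 0$, the required inequality reduces to
\[
  \frac{\varphi(a) - \varphi(a_2)}{\varphi(a_1) - \varphi(a_2)} \geq \frac{a - a_2}{a_1 - a_2}, \qquad a \in [a_2, a_1].
\]

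Second, this last inequality is exactly the concavity of $\varphi$ on $[a_2,a_1]$: writing $a = (1-t)a_2 + ta_1$ with $t = (a-a_2)/(a_1-a_2) \in [0,1]$, it rearranges to $\varphi(a) \geq (1-t)\varphi(a_2) + t \varphi(a_1)$. Concavity of $\varphi(a) = 1 - 1/(1+a)$ on $(0,\infty)$ is immediate from $\varphi''(a) = -2/(1+a)^3 < 0$. There is no serious obstacle: once Lemmas \ref{L:JumpedRays} and \ref{L:Lint1} convert the functional inequality into a comparison of ray parameters before and after applying $\varphi$, the whole statement collapses to this second-derivative computation; the only thing worth double-checking is that the hypothesis $v_2 < v_1$ ensures the direction of the inequality is preserved when we cancel $v_1 - v_2$.
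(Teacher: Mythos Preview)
Your proof is correct and follows essentially the same approach as the paper: both apply Lemma~\ref{L:Lint1} on each side, use Lemma~\ref{L:JumpedRays} to track how $J$ maps rays, and reduce the inequality to the concavity of $\varphi$. The only cosmetic difference is that the paper states the final scalar inequality in the slope-comparison form $\frac{\varphi(a_1)-\varphi(a_2)}{a_1-a_2} \leq \frac{\varphi(a)-\varphi(a_2)}{a-a_2}$, while you state the equivalent convex-combination form; your explicit computation of $\varphi''$ is a small bonus.
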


\begin{proof}
From Lemma \ref{L:Lint1}, $g$ is constant along rays $R[p_0; a]$, where $a_2\leq a\leq a_1$, its value there being
	$$
	(v_1-v_2)\frac{a-a_2}{a_1-a_2} +v_2.
	$$
Under $J$, these rays are mapped to the rays $R[J(p_0); \varphi(a)]$ in the domain of $f$, along which -- again by Lemma \ref{L:Lint1} -- $f$ is constant, with value
	$$
	(v_1-v_2)\frac{\varphi(a) - \varphi(a_2)}{\varphi(a_1)-\varphi(a_2)} +v_2. 
	$$
So, proving $f(J(x,\lambda)) \geq g(x, \lambda)$ becomes equivalent to proving
	$$
	\frac{\varphi(a_1)-\varphi(a_2)}{a_1-a_2} \leq \frac{\varphi(a) - \varphi(a_2)}{a-a_2}, \text{ for all } a_2 <a\leq a_1,
	$$
which is true of any concave function.
\end{proof}


\vspace{0.2in}
In order to prove \eqref{e:Ja2}, it will be convenient to have an explicit expression for $\candidateg$ which exhibits its geometric structure.
When $\lambda \leq 1$ we can give an expression for $\candidateg$ similar to \eqref{construction:flerp}
\begin{align*}
  \candidateg(x,\lambda) = \begin{cases}
    \frac{1}{2}\candidatef(2x,\lambda) &\text{if } x \leq \frac{\lambda}{4} \\
    \iLint\big[ \big(\frac{\lambda}{4}, \frac{1}{2}), (\lambda, 1) \big](x) &\text{if } \frac{\lambda}{4} < x \leq \lambda \\
    1 &\text{if } x > \lambda
  \end{cases}
\end{align*}

To give an explicit expression for $\candidateg(x,\lambda)$ for larger $\lambda$ we can define subdomains $S_m(\candidateg)$ analogous to those
used to define $\candidatef$ in the previous section.

Define the points
\begin{align*}
  G_k^m = \Big(\frac{1}{2^k}, \: m-k+3-\frac{1}{2^{k-1}}\Big)
\end{align*}
for $0 \leq k \leq m$. These are related to the points $F_k^m$ by
\begin{align*}
  G_k^m = C(F^{m-1}_{k-1}) \quad\text{and}\quad F_k^m = J(G^m_k)
\end{align*}
The curve $\breve{\Gamma}_m$, analogous to $\Gamma_m$ (see Figure \ref{fig:xlplanes-g}), is the piecewise-linear curve given by
\begin{align*}
  (0,0) \to G_m^m \to \dots \to G_2^m \to G_1^m \to G_0^m.
\end{align*}
Let $\breve{\gamma}_m$ be the function on $[0,1]$ whose graph is $\breve{\Gamma}_m$, and let $\breve{\alpha}_m : [0, m+1] \to [0,\frac{1}{2}]$
be its inverse function.

The function $\candidateg$ can be written in terms of the domains: for $m \geq 1$
\begin{align*}
  S_m(\candidateg) = \{(x,\lambda) \in (0,1] \times \mathbb{R}_{\geq 0}:\, \breve{\gamma}_{m-1}(x) < \lambda \leq \breve{\gamma}_m(x) \},
\end{align*} and
\begin{align*}
  S_0(\candidateg) = \{(x,\lambda) \in (0,1] \times \mathbb{R}_{\geq 0}:\, \lambda \leq \breve{\gamma}_0(x) \}.
\end{align*}
Observe that $S_m(\candidatef) = J(S_m(\candidateg))$.

\begin{figure}[H]
  \centering
  \begin{subfigure}{0.4875\textwidth}
    \centering
    \includegraphics[width=\linewidth]{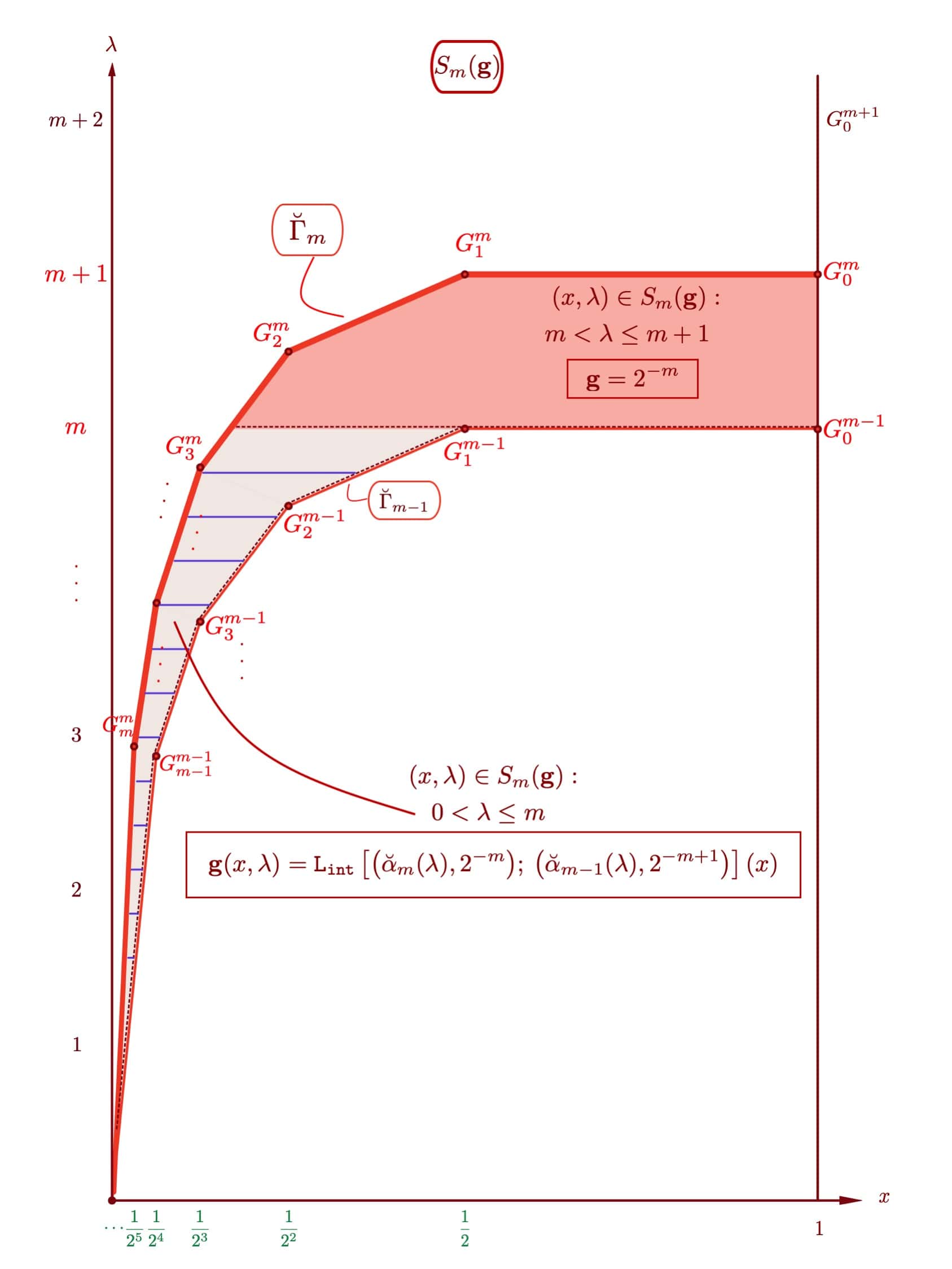}
    \caption{Sketch of the function $\candidateg$ on $S_m(\candidateg)$.}
    \label{fig:xlplanes-g-min}
  \end{subfigure}
  \begin{subfigure}{.4875\textwidth}
    \centering
    \includegraphics[width=\linewidth]{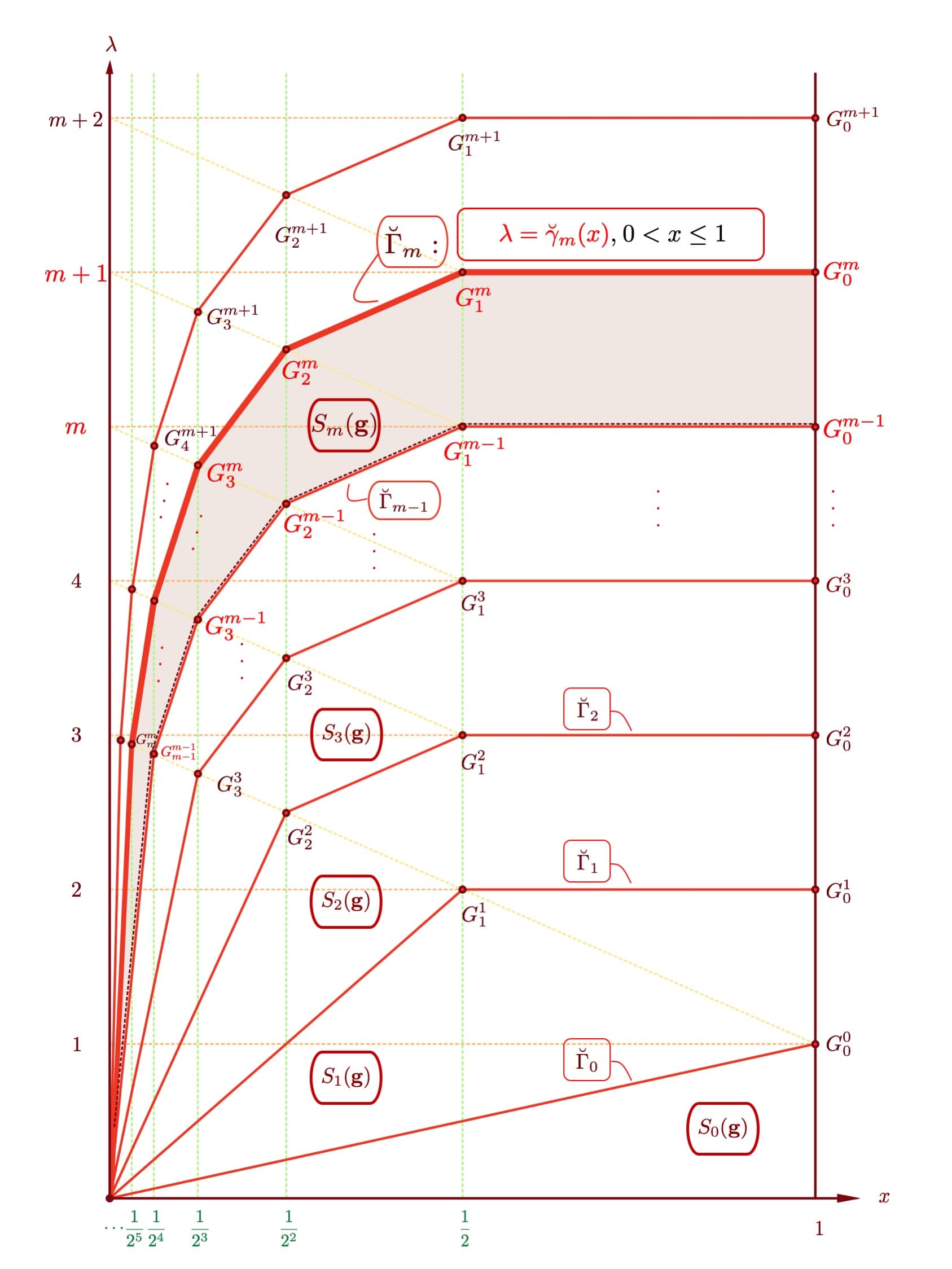}
    \caption{Sketch of the $S_m(\candidateg)$ subdomains.}
    \label{fig:xlplanes-g}
  \end{subfigure}
\caption{}
\end{figure}

With these definitions we have
\begin{align*}
  \bfg(x,\lambda) = \begin{cases}
    2^{-m}, &\text{ if } m<\lambda\leq m+1\\
    \iLint\left[ \left(\breve{\alpha}_m(\lambda), 2^{-m}\right); \: \left(\breve{\alpha}_{m-1}(\lambda), 2^{-m+1}\right)\right](x), &\text{ otherwise.}
  \end{cases}
\end{align*}
for every $(x,\lambda) \in S_m(\candidateg)$ and $m \geq 1$ (see Figure \ref{fig:xlplanes-g-min}), and $\candidateg(x,\lambda)=1$ when $(x,\lambda) \in S_0(\candidateg)$.

\begin{proposition}
  The functions $\bfg$ and $\bff$ satisfy:
  \begin{equation} \label{e:J1}
    \bff\big(J(x, \lambda)\big) \geq \bfg(x, \lambda),
  \end{equation}
  for all $(x,\lambda)$.
\end{proposition}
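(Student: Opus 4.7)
The plan is to reduce \eqref{e:J1} to Corollary \ref{C:JumpLint} via a piecewise decomposition of the domain of $\bfg$. The starting observation is that $J$ maps $S_m(\bfg)$ into $S_m(\bff)$ for every $m\geq 0$ (since $J(G_k^m)=F_k^m$ forces $J$ to send each linear segment of $\breve{\Gamma}_m$ onto the corresponding segment of $\Gamma_m$), so it suffices to verify the inequality slab by slab. On $S_0(\bfg)$ the claim is immediate: $\bfg\equiv 1$ there and $J$ lands inside $S_0(\bff)$ where $\bff\equiv 1$. For each $m\geq 1$, I then separate the \emph{plateau} part $\{m<\lambda\leq m+1\}$ on which $\bfg\equiv 2^{-m}$ from the \emph{interpolation} part on which the horizontal line at height $\lambda$ crosses both $\breve{\Gamma}_m$ and $\breve{\Gamma}_{m-1}$. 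On the plateau the inequality is immediate from \eqref{e:f-val-sm}, which guarantees $\bff\geq 2^{-m}$ throughout $S_m(\bff)$.

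The heart of the argument is the interpolation part, which I partition further by which pair of linear segments of $\breve{\Gamma}_m$ and $\breve{\Gamma}_{m-1}$ is crossed by the horizontal line at height $\lambda$. On each such sub-piece, $\bfg$ reduces to the linear interpolation $\iLint\big[(\breve{\alpha}_m(\lambda),2^{-m});(\breve{\alpha}_{m-1}(\lambda),2^{-m+1})\big](x)$, where $\breve{\alpha}_m(\lambda)$ and $\breve{\alpha}_{m-1}(\lambda)$ trace two specific line segments whose slopes are read off from the vertices $G_k^n$ (namely $2^k-2$ for the segment $G_k^n\to G_{k-1}^n$, or $3\cdot 2^n-2$ for the initial segment $(0,0)\to G_n^n$). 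Letting $p$ be the intersection of the extensions of these two segments, the sub-piece is contained in the angle sector $\measuredangle[p; a_2, a_1]$ whose bounding rays coincide with the two segments (with $a_i$ the reciprocal of the corresponding slope), and $\bfg$ there matches the function $g$ of Corollary \ref{C:JumpLint} with $v_2=2^{-m}$ and $v_1=2^{-m+1}$.

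Because $J$ is an affine bijection, $J(p)$ equals the intersection of the images of the two extended segments, which (using $J(G_k^m)=F_k^m$) are precisely the corresponding segments of $\Gamma_m$ and $\Gamma_{m-1}$. By Corollary \ref{C:JumpedAngles} and Lemma \ref{L:JumpedRays}, $J$ maps the sub-piece into the angle sector $\measuredangle[J(p);\varphi(a_2),\varphi(a_1)]$, on which $\bff$ coincides with the function $f$ of Corollary \ref{C:JumpLint}; the corollary then delivers $\bff(J(x,\lambda))\geq\bfg(x,\lambda)$. The topmost horizontal segments of $\breve{\Gamma}_m$ (the pieces $G_1^m\to G_0^m$ at $\lambda=m+1$) correspond to ``rays of infinite parameter'' and are handled either by taking the limit $a\to\infty$ in Corollary \ref{C:JumpLint} (noting $\varphi(a)\to 1$, which matches the slope $1$ of the image segment $F_1^m\to F_0^m$) or simply by absorbing them into the plateau already treated. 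The main technical obstacle I anticipate is the bookkeeping: exhaustively enumerating which segment-pairs are realized inside each $S_m(\bfg)$, confirming that these sub-pieces together with the plateau partition the slab, and verifying the ordering $a_2<a_1$ required by the corollary (which amounts to the geometric fact that $\breve{\Gamma}_m$ is steeper than $\breve{\Gamma}_{m-1}$ on each matching pair of segments).
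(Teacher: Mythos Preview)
Your proposal is correct and takes essentially the same approach as the paper: decompose into slabs $S_m(\bfg)$, dispose of $S_0$ and the plateau $\{m<\lambda\leq m+1\}$ trivially, partition the remaining interpolation region into angle-sector sub-pieces determined by pairs of segments of $\breve{\Gamma}_m$ and $\breve{\Gamma}_{m-1}$, and apply Corollary~\ref{C:JumpLint} on each. The paper carries out precisely the bookkeeping you anticipate (explicitly computing the centers $p_0$ and the parameters $a_2,a_1$ for the bottom angular piece anchored at the origin, the two horizontal-strip pieces $R_{\text{top}}$ and $R_{\text{bot}}$ at each interior vertex $G_k^m$, and the special piece at $G_m^m$), but the overall strategy is identical.
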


\begin{proof}
We show that \eqref{e:J1} holds in each strip $S_m(\bfg)$, $m\geq 0$:
	\begin{equation} \label{e:J2}
	\text{If } p \in S_m(\bfg), \text{ then } \bff(J(p)) \geq \bfg(p).
	\end{equation}

\begin{figure}[h]
\centering
\begin{subfigure}{.5\textwidth}
  \centering
  \includegraphics[width=\linewidth]{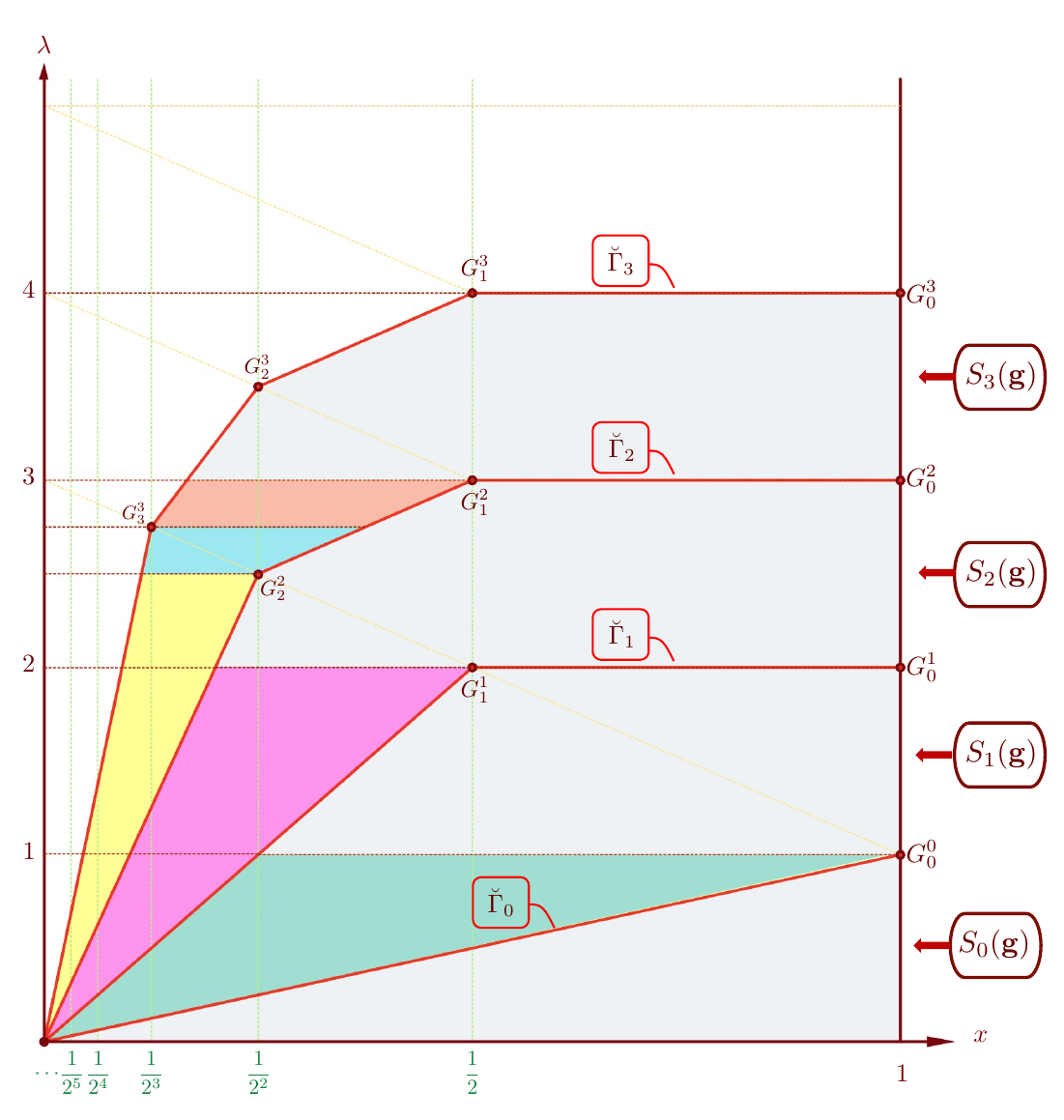}
  \caption{Sketch of $\bfg$: highlighted regions jump to the corresponding regions highlighted in the sketch of $\bff$.}
  \label{fig:SStackg}
\end{subfigure}%
\begin{subfigure}{.5\textwidth}
  \centering
  \includegraphics[width=0.98\linewidth]{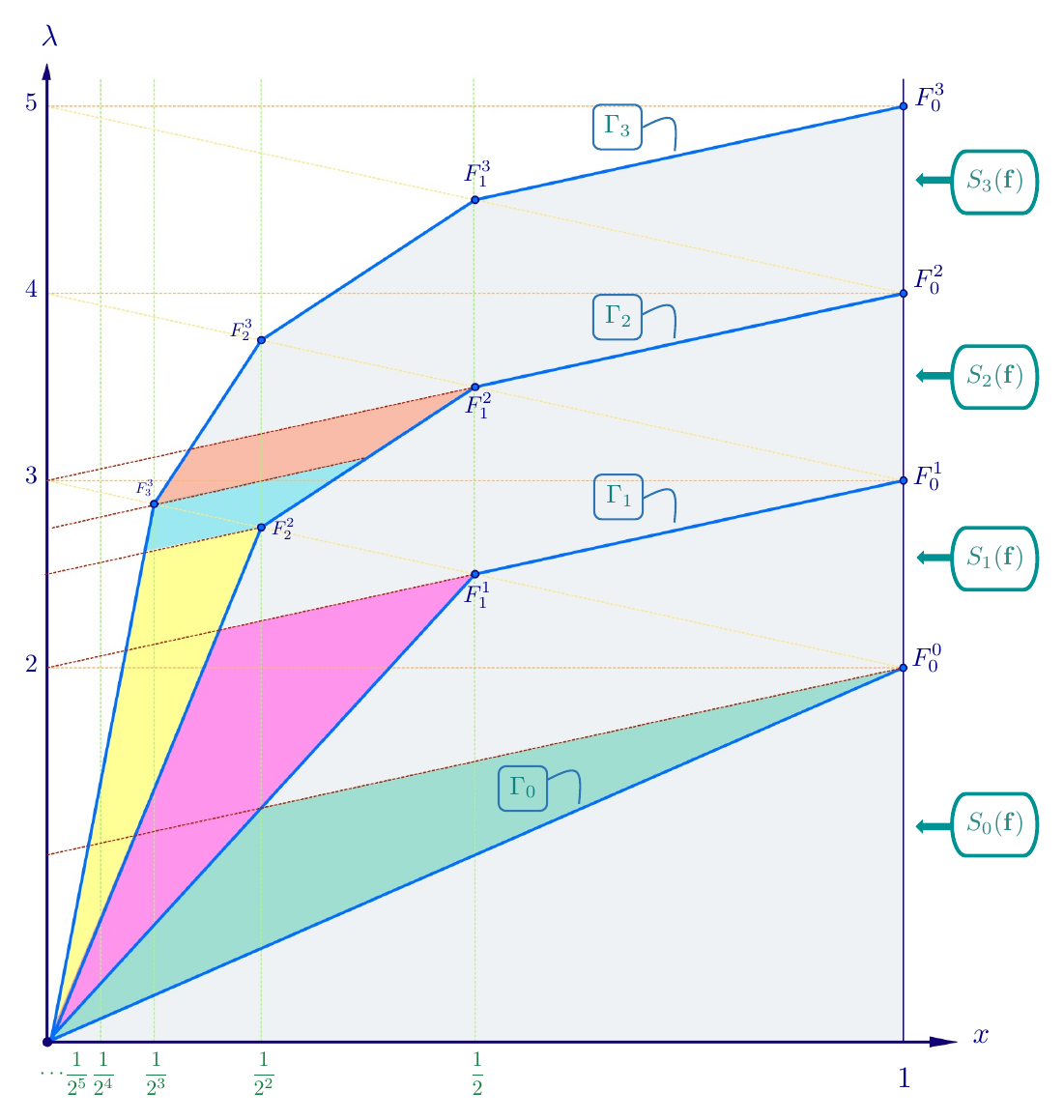}
  \caption{Sketch of $\bff$, showing the images under Jump of corresponding $\bfg$-regions.}
  \label{fig:SStackf}
\end{subfigure}
\caption{}
\label{fig:SStack}
\end{figure}

\begin{figure}
\centering
\begin{subfigure}{.45\textwidth}
  \centering
  \includegraphics[width=\linewidth]{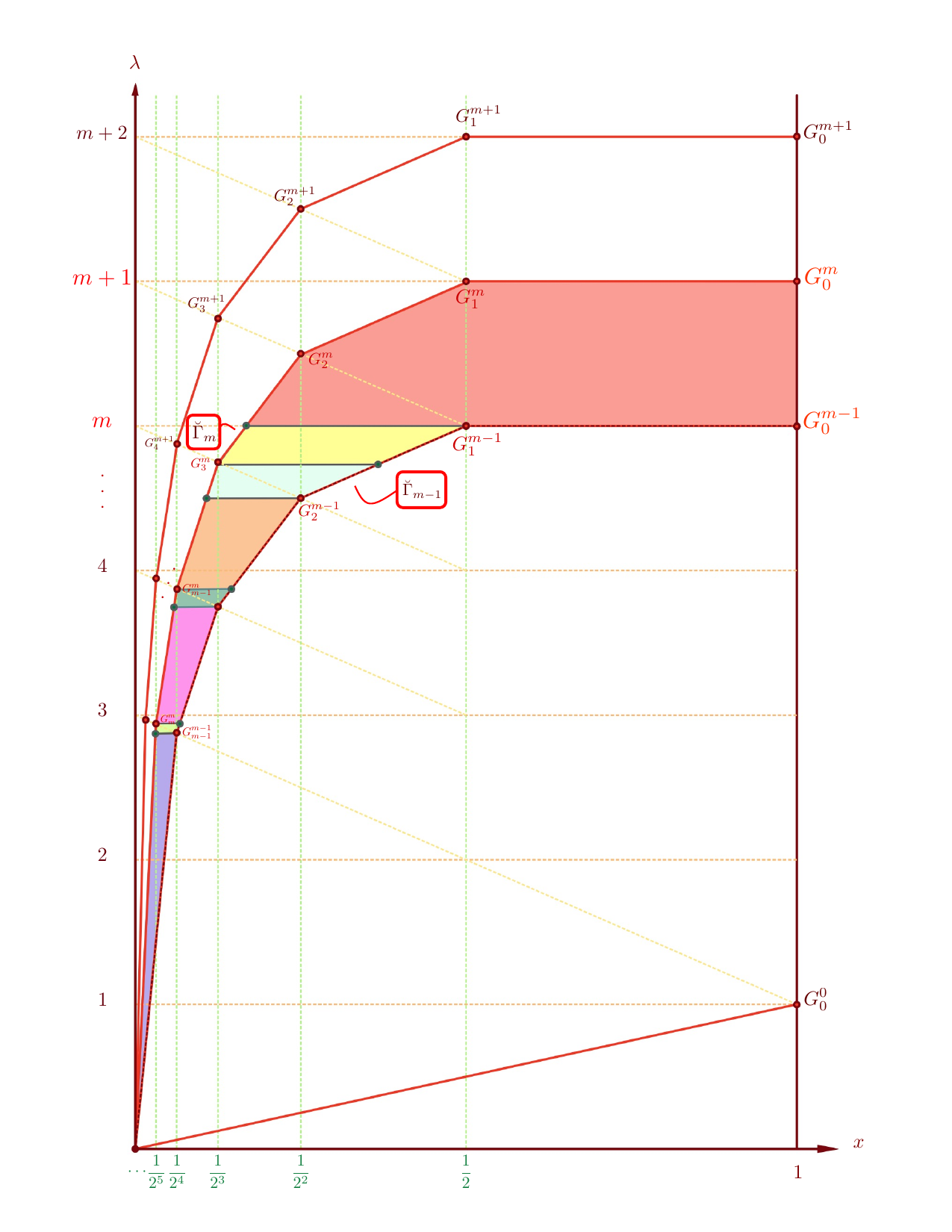}
  \caption{Domain of $\bfg$.}
  \label{fig:Jstripg}
\end{subfigure}%
\begin{subfigure}{.45\textwidth}
  \centering
  \includegraphics[width=\linewidth]{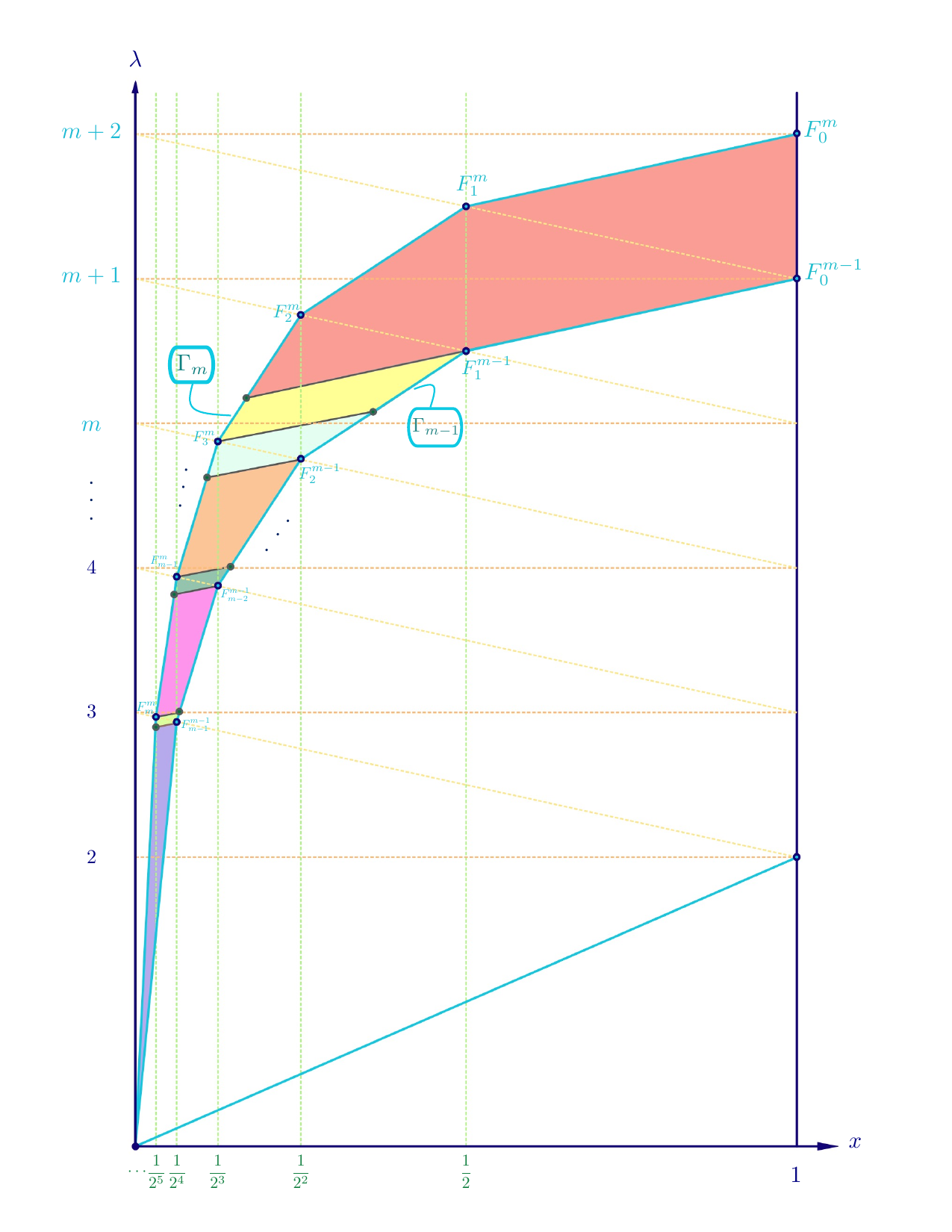}
  \caption{Domain of $\bff$.}
  \label{fig:Jstripf}
\end{subfigure}
\caption{Jump of a strip}
\label{fig:Jstrip}
\end{figure}

  We work our way up, starting with $S_0(\bfg)$ (see Figure \ref{fig:SStack}): here \eqref{e:J1} trivially holds, and is in fact equality -- since both $\bff$ and $\bfg$ are identically $1$ in $S_0(\bff)$ and $S_0(\bfg)$, respectively.
  Now, let us look at $S_1(\bfg)$. First note that \eqref{e:J1} will trivially hold in the trapezoidal region $\{(x,\lambda) \in S_1(\bfg):\, 1<\lambda\leq 2\}$, where $\bfg$ is identically
  $\tfrac{1}{2}$. What remains of $S_1(\bfg)$ is the part contained in the angle sector $\measuredangle[0; \: \tfrac{1}{4}, 1]$, which jumps by construction into exactly the part of $S_1(\bff)$ contained in the angle sector
  $\measuredangle[0; \: \tfrac{1}{5}, \tfrac{1}{2}]$. Then it follows from Corollary \ref{C:JumpLint} that \eqref{e:J1} holds in all of $S_1(\bfg)$.

  The situation is similar for $S_2(\bfg)$. In fact, we can see right away that for every strip $S_m(\bfg)$, $m\geq 1$, \eqref{e:J1} trivially holds in the trapezoidal region
  $\{(x,\lambda) \in S_m(\bfg):\, m<\lambda\leq m+1\}$, where $\bfg$ is identically $2^{-m}$. Similarly, we can quickly check that \eqref{e:J1} will hold for the lowest angular region anchoring $S_m(\bfg)$ to the origin.
  
  The line segments anchoring $\breve{\Gamma}_m$ and $\Gamma_m$ to the origin have equations
  $$
  (0 \to G_m^m]: \: \big[x=y_m\lambda:\: 0<\lambda\leq\tfrac{1}{2^my_m} \big] \:\:\:\text{ and }\:\:\:
  (0 \to F_m^m]: \: \big[x=x_m\lambda:\: 0<\lambda\leq\tfrac{1}{2^mx_m} \big],
  $$
  respectively, where
  $$
  y_m = \frac{1}{3\cdot 2^m - 2} \:\:\:\text{ and }\:\:\: x_m = \frac{1}{3\cdot 2^m -1},
  $$
  for all $m\geq 0$. Note that these sequences satisfy
  $$
  x_m=\varphi(y_m) \:\:\:\text{ and }\:\:\: y_{m+1}=\tfrac{1}{2}x_m, \text{ for all } m\geq 0.
  $$
  For every $S_m(\bfg)$ with $m\geq1$ (see Figure \ref{fig:Jstrip}), the angular region with vertices $0$, $G_{m-1}^{m-1}$ and top side parallel to the $x$-axis
  will be part of $\measuredangle[(0,0); \: y_m, y_{m-1}]$, which jumps to $\measuredangle[(0,0); \: x_m, x_{m-1}]$ in the $\bff$-domain (since $x_m = \varphi(y_m)$ by construction), and the result follows by Corollary \ref{C:JumpLint}.

  Having checked \eqref{e:J1} at the two extreme (bottom and top) regions of $S_m(\bfg)$,  we can observe in Figures \ref{fig:SStack} and \ref{fig:Jstrip} that for $m\geq 3$ there is a third type of region which must be checked:

  At every point $G^m_k$ along $\breve{\Gamma}_m$ with
  $3\leq k \leq m-1,$
  we have two horizontal strips as in Figure \ref{fig:SStackg}: the two horizontal strips determined by $G_k^m$ (on $\breve{\Gamma}_m$) and its neighbors $G_{k-1}^{m-1}$ and $G_{k-2}^{m-1}$ (on $\breve{\Gamma}_{m-1}$).

  For both types of such regions, it will also follow from Corollary \ref{C:JumpLint} that \eqref{e:J1} is satisfied:
  In case of the top strip, let us call this trapezoid $R_{\text{top}}$ it is contained in the angle sector determined by the line segments $[G_k^m \to G_{k-1}^m]$ and $[G_{k-1}^{m-1} \to G_{k-2}^{m-1}]$:
  $$
  R_{\text{top}} \subset \measuredangle[p_0; \: \tfrac{1}{2^k-2}, \tfrac{1}{2^{k-1}-2}], \text{ where } p_0=(0, m-k+1).
  $$
  Its image $J(R_{\text{top}})$ in $S_m(\bff)$ is contained in the angle sector determined by the line segments $[F_k^m \rightarrow F_{k-1}^m]$ and $[F_{k-1}^{m-1} \rightarrow F_{k-2}^{m-1}]$:
  $$
  J(R_{\text{top}}) \subset \measuredangle[p_0; \: \tfrac{1}{2^k-1}, \tfrac{1}{2^{k-1}-1}]
  = \measuredangle[J(p_0); \: \varphi(\tfrac{1}{2^k-2}), \varphi(\tfrac{1}{2^{k-1}-2})] =
  J\bigg( \measuredangle[p_0; \: \tfrac{1}{2^k-2}, \tfrac{1}{2^{k-1}-2}] \bigg).
  $$

  The bottom strip, called $R_{\text{bot}}$ is contained in
  $\measuredangle[p_0; \: \tfrac{1}{2^{k+1}-2}, \tfrac{1}{2^{k-1}-2}]$,  where  $p_0 = \big(\tfrac{1}{3\cdot 2^{k-1}}, \tfrac{2^{k-1}-2}{3\cdot 2^{k-1}} +m-k+2\big)$,
  and its image $J(R_{\text{bot}})$ in $S_m(\bff)$ is contained in the angle sector
  $$
  \measuredangle[J(p_0); \: \tfrac{1}{2^{k+1}-1}, \tfrac{1}{2^{k-1}-1}] = J\bigg( \measuredangle[p_0; \: \tfrac{1}{2^{k+1}-2}, \tfrac{1}{2^{k-1}-2}] \bigg).
  $$

  Finally, for $G_m^m$, the bottom strip is contained in
  $
  \measuredangle[p_0; \: \tfrac{1}{3\cdot 2^{m}-2}, \tfrac{1}{2^{m-1}-2}]$,  where $p_0 = \big(\tfrac{1}{5\cdot 2^{m-2}}, \: \tfrac{3\cdot 2^m-2}{5\cdot 2^{m-2}}\big)$,
  and its image in $S_m(\bff)$ is contained in the angle sector
  $$
  \measuredangle[J(p_0); \: \tfrac{1}{3\cdot 2^m-1}, \tfrac{1}{2^{m-1}-1}] = J\bigg( \measuredangle[p_0; \: \tfrac{1}{3\cdot 2^{m}-2}, \tfrac{1}{2^{m-1}-2}] \bigg).
  $$
\end{proof}


\subsection{Concavity on the boundary}

\begin{theorem}
\label{T:bff-concave}
  For every fixed $\lambda>0$, the function $\candidatef(\cdot, \lambda)$ is concave.
\end{theorem}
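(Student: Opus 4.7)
The plan is to fix $\lambda_* > 0$ and verify concavity of the piecewise-linear function $\candidatef(\cdot, \lambda_*)$ by showing that the slopes of its successive linear pieces are non-increasing in $x$. For each $m \geq 0$, the horizontal line $\lambda = \lambda_*$ meets $\Gamma_m$ in at most one point; when this intersection exists denote its $x$-coordinate by $x_m(\lambda_*)$, and let $m_0$ be the smallest index for which it exists. By construction $\candidatef(x_m(\lambda_*), \lambda_*) = 2^{-m}$, the sequence $\{x_m(\lambda_*)\}_{m \geq m_0}$ is strictly decreasing in $m$, and on the interval $[x_{m_0}(\lambda_*), 1]$ the function is constant equal to $2^{-m_0}$ (by the fallback rule $x_1 = 1$, $m_1 = m_0$ from the introduction). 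The slope on $[x_{m+1}, x_m]$ is $s_m = 2^{-(m+1)}/(x_m - x_{m+1}) > 0$, so concavity reduces to the single algebraic inequality
\begin{equation*}
  x_{m-1}(\lambda_*) - x_m(\lambda_*) \;\geq\; 2\big(x_m(\lambda_*) - x_{m+1}(\lambda_*)\big)
\end{equation*}
for every relevant $m$, together with the observation that the transition from the last slope $s_{m_0} > 0$ to the constant piece of slope $0$ at $x = x_{m_0}(\lambda_*)$ is concavity-preserving (so is the transition at $x = 0$, since $\candidatef(0,\lambda_*)=0$ and the leftmost slope is positive).

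To verify the displayed inequality, I would use explicit formulas for $x_m(\lambda_*)$ depending on which segment of $\Gamma_m$ the horizontal line hits. If it is the initial ray $(0,0) \to F_m^m$ (which occurs exactly when $\lambda_* \leq 3 - 2^{-m}$), then $x_m(\lambda_*) = \lambda_*/(3 \cdot 2^m - 1)$. Otherwise it is a segment $F_k^m \to F_{k-1}^m$ of slope $2^k - 1$, and linear interpolation gives $x_m(\lambda_*) = 2^{-k} + (\lambda_* - (m-k+3-2^{-k}))/(2^k - 1)$. In the ``pure'' case where all three of $\Gamma_{m-1}, \Gamma_m, \Gamma_{m+1}$ meet the horizontal line on their initial rays, the inequality reduces after clearing denominators to $3 \cdot 2^{m+1} - 1 \geq 3 \cdot 2^{m+1} - 4$, which is trivial. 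The analogous ``pure'' case where all three hits land on non-initial segments should reduce to the same computation via the self-similarity $T(F^{m-1}_{k-1}) = F^m_k$, which follows from the identities $F_k^m = J(G_k^m)$ and $G_k^m = C(F^{m-1}_{k-1})$ recorded in the previous subsection and which makes the non-initial portion of $\Gamma_m$ the $T$-image of the non-initial portion of $\Gamma_{m-1}$.

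The main obstacle will be organizing the \emph{mixed} cases, where the indices $m-1, m, m+1$ are hit on segments of different ``types'' --- some on initial rays, others on later segments $F_k^{\bullet} \to F_{k-1}^{\bullet}$ with possibly different $k$'s across the three curves. For each $\lambda_*$ only finitely many such transitional triples occur, and in each one $x_{m-1}, x_m, x_{m+1}$ are rational functions in $\lambda_*, 2^m, 2^k$ of a very restricted form, so the concavity inequality becomes a polynomial inequality in these variables that can be checked by a bookkeeping argument similar to the initial-ray computation above. Once these transitional triples have been verified, the induction on $m$ closes and the non-increasing slope property is established, which is exactly the concavity of $\candidatef(\cdot, \lambda_*)$.
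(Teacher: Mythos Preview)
Your overall strategy---fix $\lambda_*$, list the breakpoints $x_m(\lambda_*)$, and show the slopes $s_m = 2^{-(m+1)}/(x_m-x_{m+1})$ are monotone---is exactly the paper's approach, and your verification of the ``pure initial-ray'' case is correct.

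There is, however, a real gap in the self-similarity step. The map $T(x,\lambda)=(x/2,\lambda+x/2)$ does send $\Gamma_{m-1}$ to $\Gamma_m$, but it does \emph{not} preserve horizontal lines: the image of $\{\lambda=\lambda_*\}$ is a line of slope $1$. Hence concavity of $\candidatef(\cdot,\lambda_*)$ on one horizontal slice does not transfer via $T$ to another slice, and your proposed reduction of the ``pure non-initial'' case to the initial-ray case fails as stated. What actually makes the pure non-initial case easy is a different structural fact: for $\lambda_*\in(K,K+1]$ and $m$ large, the relevant segments of $\Gamma_{m-1},\Gamma_m,\Gamma_{m+1}$ all lie on lines through the common point $(0,K)$ on the $\lambda$-axis, so the computation is formally identical to the initial-ray case with $3\cdot 2^m-1$ replaced by $2^{m-K+2}-1$.

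More importantly, the mixed cases you identify as ``the main obstacle'' are not a bookkeeping afterthought; they are the substance of the proof. For $\lambda_*\in(K,K+1]$ the sequence of slopes has the form $0,c,c,\dots,c,b,a,a,\dots$ (in the paper's notation), and the two nontrivial comparisons $b_m\le a_{m+1}$ and $c_m\le b_{m+1}$ each reduce to an inequality that is \emph{not} automatic: one must use the range restriction $\lambda_{m-K+2}^m<\lambda_*\le\lambda_{m-K+3}^{m+1}$ (respectively $\lambda_{m-K+3}^{m+1}<\lambda_*\le\lambda_{m-K+4}^{m+2}$) to bound a ratio like $(\lambda_*-K)/(\lambda_*-(K-1))$ and then check a separate algebraic inequality in $2^{m-K}$. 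These two verifications must be carried out explicitly; your proposal does not do them, and without them the argument is incomplete.
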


In this subsection we will fix $\lambda > 0$ and write
$$
\candidatef(x) = \candidatef(x, \lambda)
$$
in order to make the notation lighter.

First note that, by construction, $\candidatef(x)$ is piecewise linear for every $\lambda >0$. Denote its vertices by
  $$
  0 \to \ldots \to p_m(\lambda) \to p_{m-1}(\lambda) \to \ldots \to p_1(\lambda) \to p_0(\lambda) = 1,
  $$
  and let $s_m(\lambda)$ denote the slope of $y = \candidatef(x)$ for $p_{m+1}(\lambda) \leq x \leq p_m(\lambda)$, for every $m\geq 0$.
  
  To show $\candidatef$ is concave, \textit{it suffices to show that the sequence of slopes $\{s_m(\lambda)\}_{m\geq 0}$ is non-decreasing}.
  We will look at this sequence for $\lambda \in (0,2]$, then $\lambda \in (2,3]$, and finally we generalize the latter case to $\lambda \in (k, k+1]$ for an integer $k\geq 3$. 
  
  Recall that we has defined the points $F_k^m$, $0\leq k\leq m$, living on the curve $\Gamma_m$, with coordinates:
  $$
  F_k^m = \big(\tfrac{1}{2^k}, \: \lambda_k^m\big), \text { where } \lambda_k^m := m-k+3-\tfrac{1}{2^k}.
  $$
  We will frequently use the equations of the lines determined by the line segments $[0 \to F_m^m]$: $x=x_m\lambda$, and by the line segments:
  $$
  [F_k^m \to F^m_{k-1}], \: 1\leq k \leq m: \:\: x = \tfrac{1}{2^k-1}[\lambda-(m-k+2)].
  $$
  Furthermore, we can see from \eqref{e:f-val-sm} that for $(x,\lambda)\in S_{m+1}(\bff)$ with $0<\lambda\leq m+2$, the slope of $y = \candidatef(x)$ is given by
  \begin{equation}
    \label{e:Cpr-slope}
    \frac{1}{2^{m+1}(\alpha_m(\lambda) - \alpha_{m+1}(\lambda))}.
  \end{equation}

\begin{proof}[Proof of Theorem \ref{T:bff-concave}.]
  Suppose now that $0<\lambda\leq 2$. The sequence of slopes of $y = \candidatef(x)$ is of the form
  $$
  0, s_1(\lambda), s_2(\lambda), \ldots, \text{ where } s_m(\lambda) = \frac{1}{2^m \lambda (x_{m-1}-x_m)} \text{ for every } m \geq 1,
  $$
  which is easily seen to be non-decreasing.

Now let $2<\lambda\leq 3$ (see Figure \ref{fig:Cpr23}). The breaking points are introduced by every $F_m^m$, $m\geq 1$, which all lie on the line $\lambda=3-x$. 
For every strip $S_{m+1}(\bff)$, $m\geq 1$, we have three possible expressions for the slope of $y = \candidatef(x)$:
\begin{align*}
    \text{For } 2<\lambda\leq \lambda_m^m:&\quad 2^{m+1} \lambda \big(\frac{1}{3\cdot 2^m -1} - \frac{1}{3\cdot 2^{m+1}-1}\big) =: \frac{1}{a_m(\lambda)}.\\
    \text{For } \lambda_m^m<\lambda\leq \lambda_{m+1}^{m+1}:&\quad 2^{m+1}  \big(\frac{\lambda-2}{ 2^m -1} - \frac{\lambda}{3\cdot 2^{m+1}-1}\big) =: \frac{1}{b_m(\lambda)}.\\
    \text{For } \lambda_{m+1}^{m+1}<\lambda\leq 3:&\quad 2^{m+1}  (\lambda-2) \big(\frac{1}{ 2^m -1} - \frac{1}{2^{m+1}-1}\big) =: \frac{1}{c_m(\lambda)}.
\end{align*}

\begin{figure}[h]
\begin{center}
\includegraphics[width=0.6\linewidth]{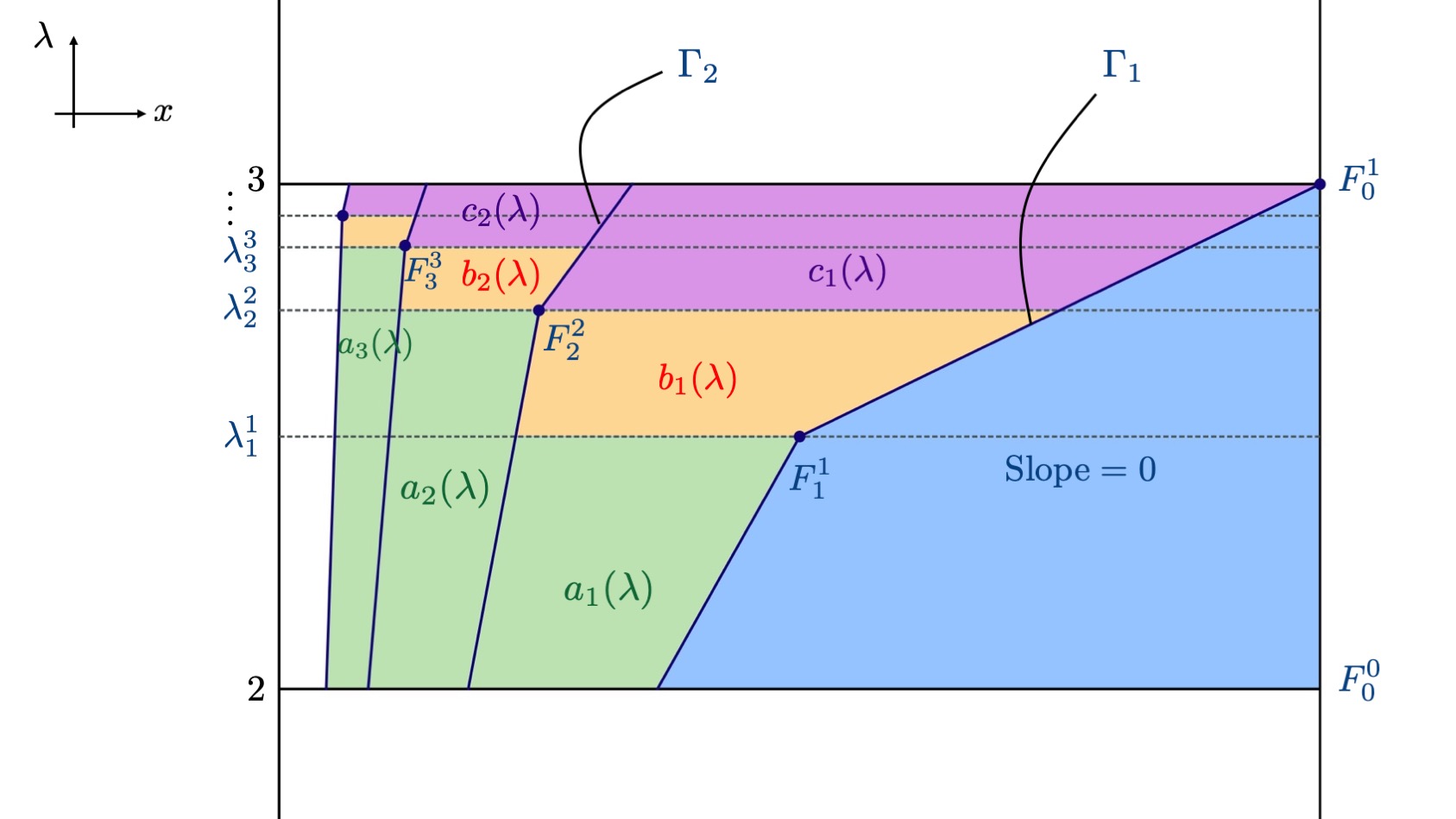}
\end{center}
       \caption{Slopes in the $2<\lambda\leq 3$ range.}
      \label{fig:Cpr23}
\end{figure}

Look at the sequence of slopes $\{s_m(\lambda)\}_{m\geq 0}$  for various ranges of $\lambda$:
	\begin{equation*}
	\begin{array}{rlllllll}
	2<\lambda\leq\lambda_1^1: & \textcolor{Azure}{0}, & \textcolor{indiagreen}{a_1(\lambda)},& \textcolor{indiagreen}{a_2(\lambda)},& \textcolor{indiagreen}{a_3(\lambda)},& \textcolor{indiagreen}{a_4(\lambda)},& 
	\textcolor{indiagreen}{a_5(\lambda)}, \ldots\\
	\lambda_1^1<\lambda\leq\lambda_2^2: & \textcolor{Azure}{0}, & \textcolor{orange-red}{b_1(\lambda)},& \textcolor{indiagreen}{a_2(\lambda)},& \textcolor{indiagreen}{a_3(\lambda)},& 
	\textcolor{indiagreen}{a_4(\lambda)},& \textcolor{indiagreen}{a_5(\lambda)}, \ldots\\
	\lambda_2^2<\lambda\leq\lambda_3^3: & \textcolor{Azure}{0}, & \textcolor{darkmagenta}{c_1(\lambda)},& \textcolor{orange-red}{b_2(\lambda)},& \textcolor{indiagreen}{a_3(\lambda)},& 
	\textcolor{indiagreen}{a_4(\lambda)},& \textcolor{indiagreen}{a_5(\lambda)}, \ldots\\
	\lambda_3^3<\lambda\leq\lambda_4^4: & \textcolor{Azure}{0}, & \textcolor{darkmagenta}{c_1(\lambda)},& \textcolor{darkmagenta}{c_2(\lambda)},& \textcolor{orange-red}{b_3(\lambda)},& 
	\textcolor{indiagreen}{a_4(\lambda)},& \textcolor{indiagreen}{a_5(\lambda)}, \ldots\\
	\lambda_4^4<\lambda\leq\lambda_5^5: & \textcolor{Azure}{0}, & \textcolor{darkmagenta}{c_1(\lambda)},& \textcolor{darkmagenta}{c_2(\lambda)},& \textcolor{darkmagenta}{c_3(\lambda)},& 
	\textcolor{orange-red}{b_4(\lambda)},& \textcolor{indiagreen}{a_5(\lambda)}, \ldots\\
	\vdots & &&\vdots&&&
	\end{array}
	\end{equation*}

It is easy to check that the sequences $\{a_m(\lambda)\}_{m\geq 1}$ and $\{c_m(\lambda)\}_{m\geq 1}$ are non-decreasing.
Therefore, in order to prove $\candidatef(x)$ is concave for $2<\lambda\leq 3$, we only need to show that:
	\begin{equation}
	\label{e:C23c1}
	b_m(\lambda) \leq a_{m+1}(\lambda), \:\text{ for } \lambda_m^m <\lambda \leq \lambda_{m+1}^{m+1}, \: m\geq 1,
	\end{equation}
and that:
	\begin{equation}
	\label{e:C23c2}
	c_m(\lambda) \leq b_{m+1}(\lambda), \:\text{ for } \lambda_{m+1}^{m+1} <\lambda \leq \lambda_{m+2}^{m+2}, \: m \geq 1.
	\end{equation}

To prove \eqref{e:C23c1}, we note that it is equivalent to
	$$
	\frac{\lambda-2}{\lambda} \geq \frac{(2^m-1)(3\cdot 2^{m+3}-1)}{(3\cdot 2^{m+1}-1)(3\cdot 2^{m+2}-1)}.
	$$
From $\lambda_m^m <\lambda \leq \lambda_{m+1}^{m+1}$, we have that
	$$
	\frac{\lambda-2}{\lambda} > \frac{2^m-1}{3\cdot 2^m -1} \geq \frac{(2^m-1)(3\cdot 2^{m+3}-1)}{(3\cdot 2^{m+1}-1)(3\cdot 2^{m+2}-1)}.
	$$
Similarly,  \eqref{e:C23c2} is equivalent to
	$$
	\frac{\lambda}{\lambda-2} \geq \frac{(2^{m-1}-1)(3\cdot 2^{m+2}-1)}{(2^{m+1}-1)(2^m-1)},
	$$
and the range condition $ \lambda_{m+1}^{m+1} <\lambda \leq \lambda_{m+2}^{m+2}$ on $\lambda$ gives that
	$$
	\frac{\lambda}{\lambda-2} \geq \frac{3\cdot 2^{m+2}-1}{2^{m+2}-1} \geq \frac{(2^{m-1}-1)(3\cdot 2^{m+2}-1)}{(2^{m+1}-1)(2^m-1)}.
	$$

  Now suppose $k<\lambda\leq k+1$ for some integer $k\geq 3$ (see Figure \ref{fig:Cpr9}). The breaking points are introduced by
  $\{F^m_{m-k+2}\}_{m\geq k-1}.$
  For every strip $S_{m+1}(\bff)$, $m\geq k-1$, we have three possible expressions for the slope of $y = \candidatef(x)$:
  \begin{align*}
      \text{For } k<\lambda\leq \lambda_{m-k+2}^m: &\quad 2^{m+1} \big(\lambda - (k-1) \big) \big(\frac{1}{2^{m-k+3} -1} - \frac{1}{2^{m-k+4}-1}\big) =: \frac{1}{a_m(\lambda)}.\\
      \text{For } \lambda_{m-k+2}^m<\lambda\leq \lambda_{m-k+3}^{m+1}: &\quad
      2^{m+1}  \big(\frac{\lambda-k}{ 2^{m-k+2} -1} - \frac{\lambda-(k-1)}{2^{m-k+4}-1}\big) =: \frac{1}{b_m(\lambda)}.\\
      \text{For } \lambda_{m-k+3}^{m+1}<\lambda\leq k+1: &\quad
      2^{m+1}  (\lambda-k) \big(\frac{1}{ 2^{m-k+2} -1} - \frac{1}{2^{m-k+3}-1}\big) =: \frac{1}{c_m(\lambda)}.
  \end{align*}

\begin{figure}[H]
\begin{center}
\includegraphics[width=0.6\linewidth]{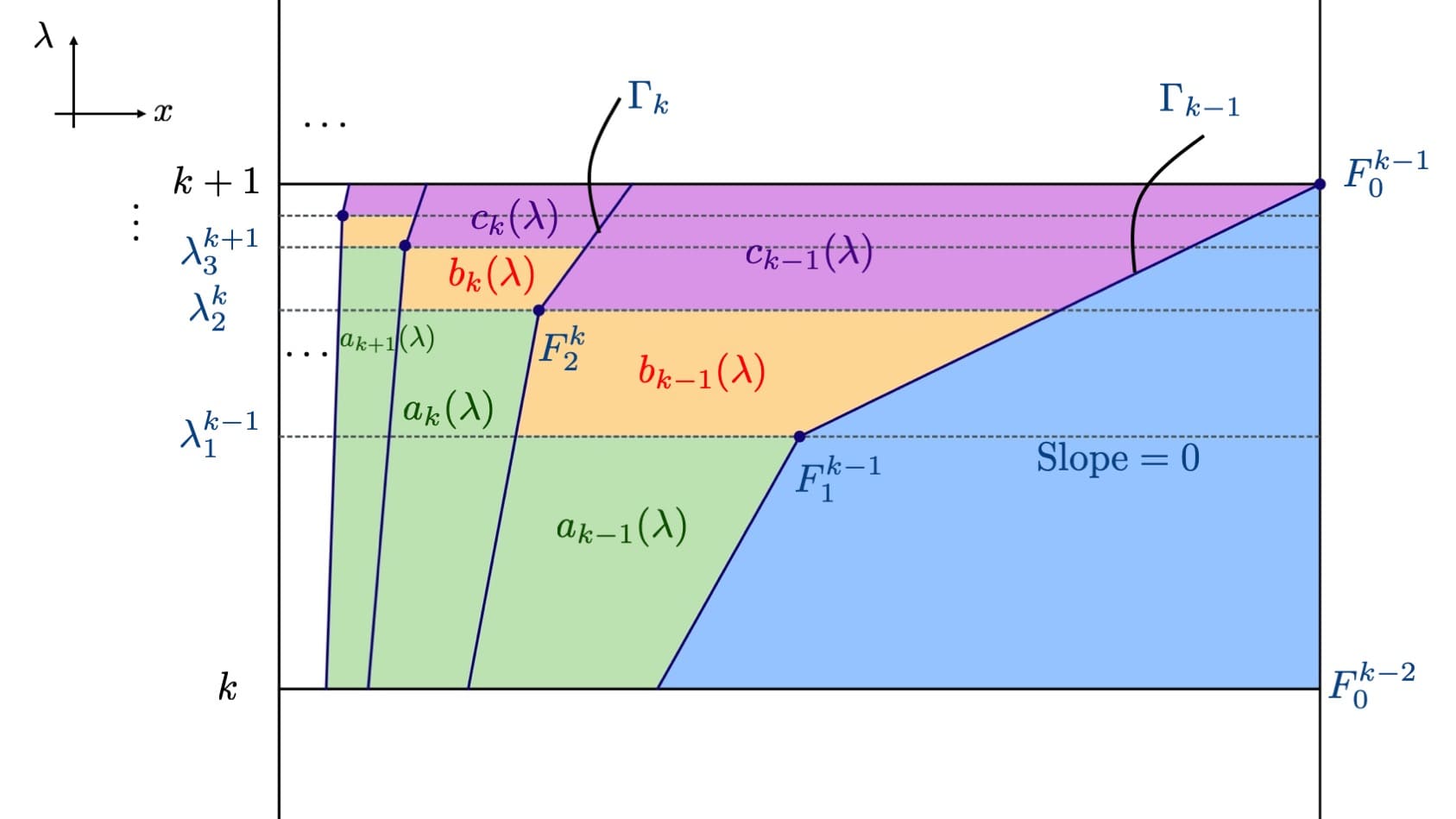}
\end{center}
       \caption{Slopes in the $k<\lambda\leq k+1$ range.}
      \label{fig:Cpr9}
\end{figure}

  The sequence of slopes of $y=\candidatef(x)$, $k<\lambda\leq k+1$, is of the form:
  \begin{equation*}
    \begin{array}{rllll}
      k<\lambda\leq\lambda_{k-1}^1: & \textcolor{Azure}{0}, & \textcolor{indiagreen}{a_{k-1}(\lambda)},& \textcolor{indiagreen}{a_k(\lambda)},& \textcolor{indiagreen}{a_{k+1}(\lambda)}, \ldots\\
      \lambda_1^{k-1}<\lambda\leq\lambda_2^k: & \textcolor{Azure}{0}, & \textcolor{orange-red}{b_{k-1}(\lambda)},& \textcolor{indiagreen}{a_k(\lambda)},& \textcolor{indiagreen}{a_{k+1}(\lambda)}, \ldots\\
      \lambda_2^k<\lambda\leq\lambda_3^{k+1}: & \textcolor{Azure}{0}, & \textcolor{darkmagenta}{c_{k-1}(\lambda)},& \textcolor{orange-red}{b_k(\lambda)},& \textcolor{indiagreen}{a_{k+1}(\lambda)}, \ldots\\
      \lambda_3^{k+1}<\lambda\leq\lambda_4^{k+2}: & \textcolor{Azure}{0}, & \textcolor{darkmagenta}{c_{k-1}(\lambda)},& \textcolor{darkmagenta}{c_k(\lambda)},& \textcolor{orange-red}{b_{k+1}(\lambda)}, \ldots\\
      \vdots & &&\vdots&
    \end{array}
  \end{equation*}
  It is easy to show that the sequences $\{a_m(\lambda)\}_{m\geq k-1}$, $\{c_m(\lambda)\}_{m\geq k-1}$ are non-decreasing. We are left to prove only that
  \begin{equation}
    \label{e:Ckc1}
    b_m(\lambda) \leq a_{m+1}(\lambda), \:\text{ for } \lambda_{m-k+2}^m <\lambda \leq \lambda_{m-k+3}^{m+1}, \: m\geq k-1,
  \end{equation}
  and that:
  \begin{equation}
    \label{e:Ckc2}
    c_m(\lambda) \leq b_{m+1}(\lambda), \:\text{ for } \lambda_{m-k+3}^{m+1} <\lambda \leq \lambda_{m-k+4}^{m+2}, \: m \geq k-1.
  \end{equation}
  The first condition \eqref{e:Ckc1} is equivalent to
  $$
  \frac{\lambda-k}{\lambda-(k-1)} \geq \frac{(2^{m-k+2}-1)(2^{m-k+6}-1)}{(2^{m-k+4}-1)(2^{m-k+5}-1)},
  $$
  and the range condition on $\lambda$ gives
  $$
  \frac{\lambda-k}{\lambda-(k-1)} > \frac{2^{m-k+2}-1}{2^{m-k+3}-1}\geq \frac{(2^{m-k+2}-1)(2^{m-k+6}-1)}{(2^{m-k+4}-1)(2^{m-k+5}-1)}.
  $$
  Similarly, \eqref{e:Ckc2} is equivalent to
  $$
  \frac{\lambda-(k-1)}{\lambda-k} \geq \frac{(2^{m-k+1}-1)(2^{m-k+5}-1)}{(2^{m-k+3}-1)(2^{m-k+2}-1)},
  $$
  and from the range condition on $\lambda$ we have
  $$
  \frac{\lambda-(k-1)}{\lambda-k} \geq \frac{2^{m-k+5}-1}{2^{m-k+4}-1}\geq \frac{(2^{m-k+1}-1)(2^{m-k+5}-1)}{(2^{m-k+3}-1)(2^{m-k+2}-1)},
  $$
  which concludes the proof.
\end{proof}

\subsection{Reducing the proof of $\bfb\in\mcb$ to $\bff$.} 
\label{sS:reduction}

In this subsection we show that $\candidateB \in \mathcal{B}$.

 \vspace{0.1in}
\noindent \textbf{Concavity:} Let us first prove that $\bfb(\cdot, \cdot, \lambda)$ is concave. 
We detail the argument for $\lambda>1$ (the case $0<\lambda\leq 1$ follows by similar considerations). 
So fix $\lambda>1$ and write $\bfb_{\lambda}(x,A)$ for $\bfb(x, A, \lambda)$:
 \begin{align*}
  \bfb_{\lambda}(x,A) = \begin{cases}
    \frac{A}{2}\bff\big(\frac{2x}{A}, \lambda\big), &\text{ if }  2x \leq A\\
    \frac{A}{2}\bff(1,\lambda), & \text{ otherwise.}
  \end{cases}
\end{align*}
Observe two crucial facts: first, $\bfb_{\lambda}$ is homogeneous:
	\begin{equation}
	\label{e:Chom}
	\bfb_{\lambda}(tx, tA) = t \cdot \bfb_{\lambda}(x, A).
	\end{equation}
Second,
 the function $\bfb_{\lambda}(\cdot, A)$ is \textit{concave}.

\begin{figure}[H]
  \begin{tikzpicture}[scale=0.2]
    \draw[thick, -] (0, 0) -- (36, 0);
    \draw[thick, -] (0, 0) -- (0, 36);
    \draw[thick, -] (0, 0) -- (32, 0) -- (32, 32) -- (0,32) -- cycle;

    \coordinate (0) at (0,0);

    \coordinate (P) at (4, 10);
    \coordinate (Q) at (25, 22);
    \coordinate (M) at ($0.5*(P) + 0.5*(Q)$);
    \draw[thick, -, irinared] (P) -- (Q);

    \node at ($(0, 10) + (-1.5, 0)$) {$A_0$};
    \node at ($(0, 16) + (-1.5, 0)$) {$A_m$};
    \node[irinared] at ($(0, 22) + (-1.5, 0)$) {$A_1$};

    \node at ($(4, 0) + (0, -1)$) {$x_0$};
    \node at ($(25, 0) + (0, -1)$) {$x_1$};
    \node at ($0.5*(29, 0) + (0, -1)$) {$x_m$};

    \coordinate (Pe) at ($32/10*(P)$);
    \coordinate (Me) at ($32/16*(M)$);

    \draw[thick, -, irinablue] (0) -- (Pe);
    \draw[thick, -, irinablue] (0) -- (Me);

    \coordinate (Pt) at ($22/10*(P)$);
    \coordinate (Mt) at ($22/16*(M)$);

    \node[irinared] at ($22/10*(4, 0) + (0, -1.5)$) {$\frac{A_1}{A_0}x_0$};
    \node[irinared] at ($11/16*(29, 0) + (0, -1.5)$) {$\frac{A_1}{A_m}x_m$};

    \node at (37, 0) {$x$};
    \node at (32, -1) {$1$};
    \node at (-1, 32) {$2$};

    \node at ($(P) + (-0.75, 0.75)$) {$p$};
    \fill (P) circle (7pt);

    \node at ($(Q) + (-0.0, 1)$) {$q$};
    \fill (Q) circle (7pt);

    \node at ($(M) + (-0.75, 0.75)$) {$m$};
    \fill (M) circle (7pt);

    \node[irinared] at ($(Pt) + (-0.8, 1.2)$) {$\widetilde{p}$};
    \fill[irinared] (Pt) circle (7pt);

    \node[irinared] at ($(Mt) + (-0.8, 1.2)$) {$\widetilde{m}$};
    \fill[irinared] (Mt) circle (7pt);

    \draw[thick, densely dashed, irinared] let \p1 = (Q) in (0, \y1) -- (Q);
    \draw[thin, densely dotted] let \p1 = (Q) in (\x1, 0) -- (Q);

    \draw[thin, densely dotted] let \p1 = (M) in (0, \y1) -- (M);
    \draw[thin, densely dotted] let \p1 = (M) in (\x1, 0) -- (M);

    \draw[thin, densely dotted] let \p1 = (P) in (0, \y1) -- (P);
    \draw[thin, densely dotted] let \p1 = (P) in (\x1, 0) -- (P);

    \draw[thick, densely dashed, irinared] let \p1 = (Pt) in (\x1, 0) -- (Pt);
    \draw[thick, densely dashed, irinared] let \p1 = (Mt) in (\x1, 0) -- (Mt);

  \end{tikzpicture}
       \caption{Elements of the proof that $\candidateB_\lambda$ is concave.}
      \label{fig:lastone}
\end{figure}

Since $\bfb_{\lambda}$ is bounded, it suffices to show it is midpoint concave. So let two points in the domain (see Figure \ref{fig:lastone}):
	$$
	p = (x_0, A_0); \:\: q= (x_1, A_1),
	$$
and let $m=(x_m, A_m)$ be their midpoint. We show that
	\begin{equation}
	\label{e:conc}
	\bfb_{\lambda}(m) \geq \tfrac{1}{2}\big(\bfb_{\lambda}(p)+\bfb_{\lambda}(q)\big).
	\end{equation}
As in Figure \ref{fig:lastone}, consider the points
	$$
	\widetilde{p}:= \big(\tfrac{A_1}{A_0}x_0, A_1\big); \:\: \widetilde{m}:=\big(\tfrac{A_1}{A_m}x_m, A_1\big).
	$$
By \eqref{e:Chom}, we can express:
	$$
	\bfb_{\lambda}(p) = \tfrac{A_0}{A_1}\bfb_{\lambda}(\widetilde{p}); \:\: 	\bfb_{\lambda}(m) = \tfrac{A_m}{A_1}\bfb_{\lambda}(\widetilde{m}).
	$$
Then \eqref{e:conc} can be written as:
	$$
	\bfb_{\lambda}\big(\tfrac{A_1}{A_m}x_m, A_1\big) \geq \tfrac{A_0}{A_0+A_1} \bfb_{\lambda}\big(\tfrac{A_1}{A_0}x_0, A_1\big) +
		\tfrac{A_1}{A_0+A_1} \bfb_{\lambda}(x_1, A_1),
	$$
which follows by the \textit{horizontal concavity} (in the $x$-direction) of $\bfb_{\lambda}$.

\vspace{0.2in}
\noindent \textbf{Jump Inequality:} Finally, we prove that $\bfb$ satisfies the Jump Inequality:
	\begin{equation}
	\label{e:Jumpagain}
	\bfb(x,A,\lambda) \leq \bfb(x,A+1,\lambda+x), \text{ for all } 0\leq A\leq 1.
	\end{equation}
We already know this holds at $A=0$ and $A=1$ (the latter is exactly \eqref{e:Ja2}), so we assume in what follows that $0<A<1$. We will frequently use the fact that
	\begin{equation}
	\label{e:flower}
	\bff\bigg(\frac{2x}{A+1}, \lambda+x\bigg) \geq \bff\bigg(\frac{2x}{A+1}, \lambda + x \frac{2}{A+1}\bigg)
	\geq \bfg\bigg(\frac{2x}{A+1}, \lambda\bigg),
	\end{equation}
where the first inequality follows because $\bff$ is non-increasing in $\lambda$, and the second from \eqref{e:Ja2}. 
The most frequent situation from here is when we can express $\bfg$ as $\bfg(x,\lambda) = \tfrac{1}{2}\bff(2x,\lambda)$ -- this is treated in \textit{Case 1A.1} below. However, $\bfg$ does not have this expression everywhere in the domain, so we must split into cases, depending on the expression of $\bfg$.
Another useful fact will be:
	\begin{equation}
	\label{e:star}
	\bff(tx,\lambda) \geq t \bff(x,\lambda)
	\end{equation}
which follows from concavity of $\bff(\cdot, \lambda)$.

\vspace{0.1in}
 \underline{\textit{Case 1:}} $0<\lambda\leq 1$. Since $\lambda+x$ could land in $(1,2]$, we have two major subcases. We will only show the first case below, as the other possibility ($1<\lambda+x\leq 2$) is proved by identical considerations.
 
 \vspace{0.1in}
 \noindent \underline{\textit{Case 1A:}} $0< x\leq 1-\lambda$. Then $0<\lambda+x\leq 1$, and the quantities in \eqref{e:Jumpagain} are explicitly given by:
 \begin{align*}
  \bfb(x,A,\lambda) = \begin{cases}
    \tfrac{A}{2}\bff\big(\tfrac{2x}{A}, \lambda\big), &\text{ if }  0<x\leq \tfrac{A}{4}\lambda\\
    \tfrac{A}{3} + \tfrac{2x}{3\lambda}, &\text{ if } \tfrac{A}{4}\lambda<x\leq A\lambda\\
    A, & \text{ if } A\lambda<x\leq 1,
  \end{cases}
\end{align*}
 and
 \begin{align*}
  \bfb(x,A+1,\lambda+x) = \begin{cases}
    \tfrac{A+1}{2}\bff\big(\tfrac{2x}{A+1}, \lambda+x\big), &\text{ if }  0<x\leq \tfrac{A+1}{3-A}\lambda\\
    \tfrac{A+1}{3} + \tfrac{2x}{3(\lambda+x)}, &\text{ if } \tfrac{A+1}{3-A}\lambda<x\leq \tfrac{2-A}{A}\lambda\\
    1, & \text{ if } \tfrac{2-A}{A}\lambda<x\leq 1.
  \end{cases}
\end{align*}

 \vspace{0.1in}
 \noindent \underline{\textit{Case 1A.1:}} $0<x\leq \tfrac{A}{4}\lambda$.
Then \eqref{e:Jumpagain} is equivalent to:
	\begin{equation}
	\label{e:faq}
	\tfrac{A}{A+1}\bff\big(\tfrac{2x}{A}, \lambda\big) \leq \bff\big(\tfrac{2x}{A+1}, \lambda+x\big).
	\end{equation}
We check this below:
$$
\bff\big(\tfrac{2x}{A+1}, \lambda+x\big) 
\underset{\eqref{e:flower}}\geq \bfg\big(\tfrac{2x}{A+1}, \lambda\big) = \tfrac{1}{2}\bff\big(\tfrac{4x}{A+1}, \lambda\big) 
=\tfrac{1}{2} \bff\big(\tfrac{2x}{A} \cdot \tfrac{2A}{A+1}, \lambda\big) 
\underset{\eqref{e:star}}\geq \tfrac{A}{A+1}\bff\big(\tfrac{2x}{A}, \lambda\big).
$$

 \vspace{0.1in}
 \noindent \underline{\textit{Case 1A.2:}} $\tfrac{A}{4}\lambda <x\leq \tfrac{A+1}{8}\lambda$. 
 Then \eqref{e:Jumpagain} is equivalent to:
	\begin{equation}
	\label{e:eh1}
	\tfrac{A}{3} + \tfrac{2x}{3\lambda} \leq \tfrac{A+1}{2} \bff\big(\tfrac{2x}{A+1}, \lambda+x\big).
	\end{equation}
Since $0<\lambda\leq1$:
$$
\tfrac{A+1}{2} \bff\big(\tfrac{2x}{A+1}, \lambda+x\big) 
\underset{\eqref{e:flower}}\geq \tfrac{A+1}{2} \bfg\big(\tfrac{2x}{A+1}, \lambda\big) 
= \tfrac{A+1}{4}\bff\big(\tfrac{4x}{A+1}, \lambda\big)
= \tfrac{A+1}{4}\bff\big(\tfrac{8x}{(A+1)\lambda}\cdot \tfrac{\lambda}{2}, \lambda\big)
\underset{\eqref{e:star}}\geq \tfrac{2x}{\lambda} \underbrace{\bff\big(\tfrac{\lambda}{2}, \lambda\big)}_{=1}.
$$
Now, from the range assumption in this case, we can further say that
$$
\tfrac{A+1}{2} \bff\big(\tfrac{2x}{A+1}, \lambda+x\big) \geq \tfrac{2x}{\lambda} = \tfrac{2x}{3\lambda} + \tfrac{4}{3\lambda}x
> \tfrac{2x}{3\lambda} + \tfrac{4}{3\lambda} \tfrac{A\lambda}{4} = \tfrac{A}{3} + \tfrac{2x}{3\lambda},
$$
proving the claim.

 \vspace{0.1in}
 \noindent \underline{\textit{Case 1A.3:}} $\tfrac{A+1}{8}\lambda <x\leq A\lambda$. In this case, \eqref{e:Jumpagain} is again equivalent to \eqref{e:eh1}, the only difference in the argument being the expression of $\bfg$:
 $$
\tfrac{A+1}{2} \bff\big(\tfrac{2x}{A+1}, \lambda+x\big) 
\underset{\eqref{e:flower}}\geq \tfrac{A+1}{2} \bfg\big(\tfrac{2x}{A+1}, \lambda\big) 
= \tfrac{A+1}{2} \big(\tfrac{1}{3}+\tfrac{4x}{3\lambda(A+1)} \big)= \tfrac{A+1}{6} + \tfrac{2x}{3\lambda} > \tfrac{A}{3} + \tfrac{2x}{3\lambda}.
$$
 The rest of the cases as $A\lambda<x\leq 1$  follow by trivial calculations.

\vspace{0.1in}
 \noindent \underline{\textit{Case 2:}} $\lambda > 1$

Define the extension of $\candidatef$
\begin{align*}
  \widetilde{\candidatef}(x,\lambda) = \begin{cases}
    \candidatef(x,\lambda) &\text{if } x \leq 1 \\
    \candidatef(1,\lambda) &\text{if } x \geq 1.
  \end{cases}
\end{align*}
It is easy to check that $\widetilde{\candidatef}(\cdot, \lambda)$ is concave and non-increasing in $\lambda$.

When $\lambda > 1$ we have
\begin{align*}
  \candidateB(x,A,\lambda) = \frac{A}{2}\widetilde{\candidatef}\Big(\frac{2x}{A}, \lambda\Big).
\end{align*}
Since $\candidateB$ satisfies jump when $A=1$ we have, for all $0 \leq x \leq 1$
\begin{align*}
  \widetilde{\candidatef}(x,\lambda+x) &= \candidatef(x,\lambda+x) \\
  &\geq \candidateB(x, 1, \lambda) \\
  &= \frac{1}{2}\widetilde{\candidatef}(2x, \lambda).
\end{align*}
That is:
\begin{align} \label{guillej}
  \widetilde{\candidatef}(x,\lambda+x) \geq \frac{1}{2}\widetilde{\candidatef}(2x, \lambda).
\end{align}

We just need to show that, for $0 < A < 1$
\begin{align} \label{guillegoal}
  \frac{A+1}{2}\widetilde{\candidatef}\Big(\frac{2x}{A+1}, \lambda + x\Big) \geq \frac{A}{2}\widetilde{\candidatef}\Big(\frac{2x}{A},\lambda\Big).
\end{align}

Since $\widetilde{\candidatef}$ is non-increasing in $\lambda$
\begin{align*}
  \widetilde{\candidatef}\Big(\frac{2x}{A+1}, \lambda + x\Big) \geq \widetilde{\candidatef}\Big(\frac{2x}{A+1}, \lambda + \frac{2x}{A+1}\Big).
\end{align*}

Now we can proceed as follows:
\begin{align*}
  \widetilde{\candidatef}\Big(\frac{2x}{A+1}, \lambda + x\Big) &\geq \widetilde{\candidatef}\Big(\frac{2x}{A+1}, \lambda + \frac{2x}{A+1}\Big) \\
  &\geq \frac{1}{2}\widetilde{\candidatef}\Big(\frac{4x}{A+1}, \lambda\Big) \tag{by \eqref{guillej}} \\
  &\geq \frac{A}{A+1}\widetilde{\candidatef}\Big(\frac{2x}{A}, \lambda\Big) \tag{by concavity of $\widetilde{\candidatef}$}
\end{align*}
and this is equivalent to \eqref{guillegoal}.

\bibliography{bibliography}
\bibliographystyle{abbrv}

\end{document}